\newtheorem{theorem}{Theorem}
\newtheorem{corollary}[theorem]{Corollary}
\newtheorem{proposition}[theorem]{Proposition}
\newtheorem{lemma}[theorem]{Lemma}
\theoremstyle{definition}
\newtheorem{definition}[theorem]{Definition}
\theoremstyle{remark}
\newtheorem{remark}[theorem]{Remark}
\newcommand{\ud}{\mathrm{d}}
\newcommand{\pd}{\partial}
\newcommand{\e}{\varepsilon}
\newcommand{\bC}{\mathbb{C}}
\newcommand{\bN}{\mathbb{N}}
\newcommand{\bR}{\mathbb{R}}
\newcommand{\cA}{\mathcal{A}}
\newcommand{\ccD}{\mathcal{D}}
\newcommand{\cE}{\mathcal{E}}
\newcommand{\cG}{\mathcal{G}}
\newcommand{\cN}{\mathcal{N}}
\newcommand{\csub}{\subset \subset}
\newcommand{\coleq}{\mathrel{\mathop:}=}
\newcommand{\fX}{\mathfrak{X}}
\DeclareMathOperator{\id}{id}
\DeclareMathOperator{\supp}{supp}
 \newcommand{\ocM}{{\Omega^n_c(M)}}
\newcommand{\Cinf}{{C^\infty}}
\newcommand{\Lie}{\mathrm{L}}
\providecommand{\norm}[1]{\left\lVert#1\right\rVert}
\providecommand{\abso}[1]{\left\lvert#1\right\rvert}
\newcommand{\hsk}[2]{{\widetilde\cA_{#1}(#2)}}
\newcommand{\hlsk}[2]{{\widetilde\cA_{#1}(#2)}}
\newcommand{\ball}[2]{{B_{#1}(#2)}}
\newcommand{\D}{\mathrm{D}}
\DeclareMathOperator{\Div}{div}
\newcommand{\hG}[1]{\hat\cG(#1)}
\newcommand{\hE}[1]{\hat\cE(#1)}
\newcommand{\hEm}[1]{\hat\cE_m(#1)}
\newcommand{\hN}[1]{\hat\cN(#1)}
\newcommand{\Dp}[1]{\ccD'(#1)}
\begin{document}

\author{E. A. Nigsch}
\title{A new approach to diffeomorphism invariant algebras of generalized functions}
\maketitle



\begin{abstract}
We develop the diffeomorphism invariant Colombeau-type algebra of nonlinear generalized functions in a modern and compact way. Using a unifying formalism for the local setting and on manifolds, the construction becomes simpler and more accessible than previously in the literature.
\end{abstract}

\section{Introduction}

In the 1980s J.~F.~Colombeau introduced algebras of nonlinear generalized functions (\cite{colnew, colelem}) in order to overcome the long-standing problem of multiplying distributions, retaining as much compatibility with the classical theory as possible in light of the Schwartz impossibility result (\cite{Schwartz}). These algebras and later variations, nowadays simply known as \emph{Colombeau algebras}, contain the algebra of smooth functions as a faithful subalgebra and the vector space of Schwartz distributions as a linear subspace (see \cite{MOBook, GKOS} for a comprehensive survey).

A diffeomorphism invariant formulation of the theory was first proposed by Colombeau and Meril in \cite{colmani}, but later seen to be flawed by J.~Jel\'inek who presented a new version in \cite{jelinek}, which was subsequently refined in \cite{found}. The difficulties inherent in this development stem from the combination of three facets (see~\cite[Chapter 2]{GKOS} for a detailed discussion): first, one needs to employ a suitable notion of calculus on (non-Fr\'echet) locally convex spaces. Second, the proper handling of diffeomorphism invariance manifestly presents a major hurdle in the constructions cited above, both conceptually and technically. And third, establishing stability of the algebra under differentiation is far from trivial and requires a delicate treatment. For this reason the published results in this area consist of several long, technically involved papers which are difficult to assimilate for those not already working in the field.

In this article we give a systematically refined presentation of the global theory of full Colombeau algebras, based on the algebras $\cG^d$ of \cite{found} and $\hat\cG$ of \cite{global} but replacing a significant part of the preceding foundational material by a succint, more efficient approach.

Our presentation is based, both locally and on manifolds, on the formalism of \cite{global}, where so-called smoothing kernels are used as key components of the construction. This not only simplifies the local case in several respects compared to \cite{found} but also makes the translation to manifolds much more convenient.
En passant, several proofs of \cite{found} were simplified; in particular, we give a significantly shorter proof of stability under differentiation.
Finally, we establish the few core properties of smoothing kernels on which the whole theory depends separately, which makes for a clearer and less technical presentation.

\section{Preliminaries}

$\ball{r}{x}$ denotes the open ball of radius $r > 0$ centered at $x \in \bR^n$ with respect to the Euclidean metric. $\pd_i$ denotes the $i$th partial derivative; we employ common multi--index notation where for $\alpha=(\alpha_1,\dotsc,\alpha_n) \in \bN_0^n$ we have $\pd^\alpha = \pd_1^{\alpha_1} \dotsm \pd_n^{\alpha_n}$. $\pd_x^\alpha$ means the derivative in the $x$-variable. We abbreviate $\pd_{x+y}^\alpha \coleq (\pd_x + \pd_y)^\alpha$ which gets expanded by the binomial theorem, $(-\pd_x)^\alpha \coleq (-1)^{\abso{\alpha}} \pd_x^\alpha$ and $\pd_{(x,y)}^{(\alpha,\beta)} = \pd_x^\alpha\pd_y^\beta$. $\D_X$ means the directional derivative on functions with respect to a vector field $X$, with $\D_X^x$ denoting the directional derivative in the variable $x$. $\{e_1,\dotsc,e_n\}$ is the Euclidean basis of $\bR^n$.

We use the Landau notation $f(\e) = O(g(\e))$ for $\exists \e_0>0,C>0$: $\abso{f(\e)} \le C g(\e)$ $\forall \e \le \e_0$. $\ccD(\Omega)$ and $\ccD'(\Omega)$ denote the space of test functions and distributions on $\Omega$, respectively. The action of a distribution $u$ on a test function $\varphi$ is written as $\langle u, \varphi \rangle$. Given open subsets $\Omega, \Omega'$ of $\bR^n$, the pullback $\mu^*\rho$ of $\rho \in \ccD(\Omega')$ along a diffeomorphism $\mu\colon \Omega \to \Omega'$ is the element of $\ccD(\Omega)$ given by $(\mu^* \rho)(y) \coleq \rho(\mu y) \cdot \abso{\det \D\mu(y)}$, where $\D\mu(y)$ is the Jacobian of $\mu$ at $y$ and $\mu_* \coleq (\mu^{-1})^*$. Accordingly, $\Lie_X \varphi=\ud / \ud t|_{t=0} ((\alpha_t)^* \varphi)$ equals $\D_X \varphi + \Div X \cdot \varphi$, where $\alpha_t$ is the flow of $X$ at time $t$ and $\Div X = \sum_i \pd X^i  / \pd x_i$. The Lie derivative of a distribution $u$ along $X$ is then given by $\langle \Lie_X u, \varphi \rangle  = - \langle u, \Lie_X \varphi \rangle$.

A manifold will always mean an orientable smooth paracompact Hausdorff manifold of finite dimension. The space of distributions on a manifold $M$ is given by $\ccD'(M) \coleq \Omega^n_c(M)'$, where $\Omega^n(M)$ is the space of $n$-forms on $M$ and $\Omega^n_c(M)$ the subspace of those with compact support. We refer to \cite[Section 3.1]{GKOS} for a comprehensive exposition of distributions on manifolds. The Lie derivative of functions and $n$-forms on a manifold w.r.t.~a vector field $X$ is denoted $\Lie_X$ with $\Lie_X^x$ explicitly denoting the derivative in the $x$-variable. $\fX(M)$ is the space of smooth vector fields on $M$ and $B_r^h(x)$ is the ball of radius $r$ centered at $x$ with respect to a Riemannian metric $h$.

$A \csub B$ means that $A$ is compact and contained in the interior of $B$. We set $I \coleq (0,1]$. Calculus of smooth functions on infinite-dimensional locally convex vector spaces is to be understood in the sense of convenient calculus of \cite{KM}, whose basics are presumed to be known. In particular, we use the differentiation operator $\ud$, the fact that linear bounded maps are smooth, and that the notion of smoothness in convenient calculus agrees with the classical one for finite-dimensional spaces. For a multivariate function $f$, $\ud_i f$ means the differential in the $i$th variable.

Finally, we refer to \cite{Dowker} for notions of sheaf theory.

\section{Construction of the algebra}\label{sec_core}

We recall the steps in the construction of a Colombeau algebra on an open set $\Omega \subseteq \bR^n$. One starts with the \emph{basic space} $\hE\Omega$, which contains the representatives of generalized functions, together with embeddings of smooth functions and distributions. The action of diffeomorphisms and derivatives on the basic space is then given, extending their classical counterparts. Next follows the definition of \emph{test objects}, which are used to define the subalgebra $\hEm\Omega \subseteq \hE\Omega$ of \emph{moderate} functions and the ideal $\hN\Omega$ of \emph{negligible} functions. This in turn gives rise to the \emph{quotient algebra} $\hG\Omega$. One then verifies the desired properties of the embeddings, the sheaf property and the invariance of negligibility and moderateness under differentiation, which makes the construction complete.

\begin{definition}\label{basedef}
\begin{enumerate}[label=(\roman*)]
 \item The basic space is $\hE \Omega \coleq \Cinf(\ccD(\Omega) \times \Omega)$, the space of all smooth functions $R: (\varphi, x) \mapsto R(\varphi, x)$ on the product space $\ccD(\Omega) \times \Omega$. The embeddings $\iota\colon \Dp \Omega \to \hE \Omega$ and $\sigma\colon \Cinf(\Omega) \to \hE \Omega$ are defined as $(\iota u)(\varphi, x) = \langle u, \varphi \rangle$ for a distribution $u$ and $(\sigma f)(\varphi, x) = f(x)$ for a smooth function $f$, where $\varphi \in \ccD(\Omega)$ and $x \in \Omega$.
\item Let $\mu\colon \Omega \to \Omega'$ be a diffeomorphism onto another open subset $\Omega'$ of $\bR^n$. Given a generalized function $R \in \hE{\Omega'}$, its pullback $\mu^*R \in \hE\Omega$ is defined as $(\mu^* R)(\varphi, x) = R(\mu_*\varphi, \mu x)$.
\item The derivative of $R \in \hE\Omega$ with respect to a vector field $X \in \Cinf(\Omega, \bR^n)$ is defined as $(\hat \Lie_X R)(\varphi, x) = - \ud_1 R(\varphi, x)(\Lie_X\varphi) + (\D_X^x R)(\varphi, x)$.
\end{enumerate}
\end{definition}

\begin{remark}\label{diffcommute}
\begin{enumerate}[label=(\roman*)]
 \item The formula for $\hat\Lie_X$ is obtained by considering the pullback of $R$ along the flow of a (complete) vector field and taking its derivative at time zero.
 \item One has to verify that $\iota$, $\sigma$, $\mu^*$ and $\hat \Lie_X$ actually map into $\hG M$. First, $\iota u\colon (\varphi, x) \mapsto \varphi \mapsto \langle u, \varphi \rangle$ is smooth because continuous linear functions are smooth. Second, $\sigma f\colon (\varphi, x) \mapsto x \mapsto f(x)$ is smooth because $f$ is. Third, $\mu^*\colon \ccD(\Omega') \to \ccD(\Omega)$ as well as $\Lie_X\colon \ccD(\Omega) \to \ccD(\Omega)$ are linear and continuous and thus smooth, which implies the same for their extension to $\hE\Omega$.
\item $\hE\Omega$ is an associative commutative algebra with unit $\sigma(1)\colon(\varphi, x) \mapsto 1$, $\iota$ is a linear embedding and $\sigma$ an algebra embedding. From the definition one sees that pullback and directional derivatives commute with the embeddings.
\item $\hat\Lie_X$ is only $\bR$-linear but not $\Cinf(\Omega)$-linear in $X$; because it commutes with $\iota$, the latter property would in fact give a contradiction similar to the Schwartz impossibility result.
\end{enumerate}
\end{remark}

%

For the quotient construction we employ spaces of smoothing kernels $\hlsk q \Omega$. We give their definition and additional properties now but postpone proofs until Section \ref{sec_testobj} in order to separate the definitions and main theorems of the theory from the more intricate and technically involved details.
\begin{definition}\label{skdef}A \emph{smoothing kernel of order $q \in \bN_0$} on an open subset $\Omega$ of $\bR^n$ is a mapping $\tilde \phi \in \Cinf(I \times \Omega, \ccD(\Omega))$, $(\e,x) \mapsto [ y \mapsto \tilde \phi_{\e,x}(y)]$, satisfying the following conditions:
\begin{enumerate}
 \item[(LSK1)] $\forall K \csub \Omega$ $\exists \e_0,C>0$ $\forall x \in K$ $\forall \e<\e_0$: $\supp \tilde\phi_{\e,x} \subseteq \ball{C\e}x$, 
 \item[(LSK2)] $\forall K \csub \Omega$ $\forall \alpha,\beta \in \bN_0^n$: $(\pd_{x+y}^\alpha \pd_{y}^\beta \tilde \phi)_{\e,x}(y) = O(\e^{-n-\abso{\beta}})$ uniformly for $x \in K$ and $y \in \Omega$,
 \item[(LSK3)] $\forall K \csub \Omega$ $\forall \alpha \in \bN_0^n$ $\forall f \in \Cinf(\Omega)$:
$\int_\Omega f(y) (\pd_x^\alpha \tilde \phi)_{\e,x}(y) \,\ud y = (\pd^\alpha f)(x) + O(\e^{q+1})$
uniformly for $x \in K$.
\end{enumerate}
The space of all smoothing kernels of order $q$ on $\Omega$ is denoted by $\hlsk q \Omega$ and is an affine subspace of $\Cinf(I \times \Omega, \ccD(\Omega))$. The linear subspace parallel to it, denoted by $\hlsk {q0} \Omega$, is given by all $\tilde \phi$ satisfying (LSK1), (LSK2) and the following condition:
\begin{enumerate}
 \item[(LSK3')] $\forall K \csub \Omega$ $\forall \alpha  \in \bN_0^n$ $\forall f \in \Cinf(\Omega)$: $\int_\Omega f(y) (\pd_x^\alpha \tilde \phi)_{\e,x}(y) \,\ud y = O(\e^{q+1})$
uniformly for $x \in K$.
\end{enumerate}
\end{definition}

\begin{remark}\label{remark}
Given $\tilde \phi$ in $\hlsk q \Omega$ or $\hlsk {q0} \Omega$ and a vector field $X \in \Cinf(\Omega, \bR^n)$, $(\D_X^x + \Lie_X^y)\tilde \phi$ is an element of $\hlsk {q0} \Omega$. In fact, $((\D_X^x + \Lie_X^y)\tilde \phi)_{\e,x} = (\D_X^{x+y}\tilde \phi)_{\e,x} + \Div X \cdot \tilde \phi_{\e,x}$. For (LSK1), let $K \csub \Omega$ and choose $L$ with $K \csub L \csub \Omega$. Then for some $C>0$ such small $\e$, $\supp \tilde \phi_{\e,x} \subseteq \ball {C \e}x$ $\forall x \in L$, which implies the same for $(\D_X^x\tilde \phi)_{\e,x}$ and $(\D_X^y \tilde \phi)_{\e,x}$ if $x \in K$. For (LSK2) we note that with $X= (X^1,\dotsc,X^n)$, $(\D_X^{x+y}\tilde \phi)_{\e,x}(y)$ equals $\sum_i ((X^i(x)\pd_{x_i + y_i} + (X^i(y) - X^i(x))\pd_{y_i}) \tilde \phi)_{\e,x}(y)$; the first term of each summand can be estimated by $O(\e^{-n})$ and the second by
\[ \sup_{y \in \ball{C\e}x} \abso{X^i(y) - X^i(x)} \cdot \sup_{y \in \Omega} \abso{\pd_{y_i}\tilde \phi_{\e,x}(y)} = O(\e) O(\e^{-n-1}) = O(\e^{-n}) \]
for some $C>0$ uniformly for $x$ in compact sets, and similarly for its derivatives. (LSK3') is clear from the definitions.
\end{remark}


\begin{definition}Let $\mu\colon \Omega \to \Omega'$ be a diffeomorphism. We define the pullback $\mu^*\tilde \phi$ of a smoothing kernel $\tilde \phi \in \hlsk q {\Omega'}$ by $(\mu^*\tilde \phi)_{\e,x}(y) \coleq \mu^*(\tilde \phi_{\e, \mu x})(y) = \tilde \phi_{\e, \mu x}(\mu y) \cdot \abso{\det \D\mu(y)}$.
\end{definition}

By smoothness of $\mu$ and $\mu^*\colon \ccD(\Omega') \to \ccD(\Omega)$, $\mu^*\tilde \phi = \mu^* \circ \tilde \phi \circ (\id \times \mu)$ is an element of $C^\infty(I \times \Omega, \ccD(\Omega))$, where $\id$ is the identity mapping.

\begin{proposition}\label{additional}The smoothing kernels of Definition \ref{skdef} satisfy these properties:
 \begin{enumerate}
\item[(LSK4)]\label{lsk2} Let $U,V$ be open subsets of $\Omega$, $K \csub U \cap V$ and $q \in \bN_0$. Given $\tilde \phi \in \hlsk q U$ there exist $\e_0>0$ and $\tilde \psi \in \hlsk q V$ such that $\tilde \phi_{\e,x} = \tilde \psi_{\e,x}$ for $\e < \e_0$ and $x \in K$.
 \item[(LSK5)]\label{lsk5} $\forall u \in \Dp\Omega$ $\forall \tilde\phi \in \hlsk 0 \Omega$ $\forall k \in \bN_0$ $\forall X_1,\dotsc,X_k \in C^\infty(\Omega, \bR^n)$: $\langle u, \D_{X_1}^x \dotsm \D_{X_k}^x \tilde\phi_{\e,x}\rangle$ converges to $\Lie_{X_1} \dotsc \Lie_{X_k} u$ in $\ccD'(\Omega)$ for $\e \to 0$.
\item[(LSK6)]\label{lsk6} Given a diffeomorphism $\mu\colon \Omega \to \Omega'$ and $\tilde\phi \in \hlsk q {\Omega'}$, $\mu^*\tilde \phi \in \hlsk q \Omega$.
\item[(LSK7)]\label{lsk7} Given $\tilde \phi_0 \in \hlsk q \Omega$, $\delta \in \bN_0^n$, $\tilde \phi_{\beta} \in \hlsk {q0} \Omega$ for all $\beta \ne 0$, $\beta \le \delta$, a sequence $(\e_j)_{j \in \bN}$ with $0 < \e_{j+1} < \e_j < 1/j$ $\forall j \in \bN$, a sequence $(x_j)_{j \in \bN}$ in a set $K \csub \Omega$ and functions $\lambda_j$ as in Lemma \ref{lambdalemma}, the function $\tilde \psi \in \Cinf(I \times \Omega, \ccD(\Omega)$ defined by
\[
      \tilde \psi_{\e,x}(y) \coleq \sum_{j=1}^{\infty} \lambda_j(\e) \left(\frac{\e_j}{\e}\right)^n \sum_{\beta \le \delta} \frac{(x-x_j)^{\beta}}{\beta!} (\tilde\phi_{\beta})_{\e_j, x_j} \left(\e_j \frac{y-x}{\e} + x_j\right)
\]
is an element of $\hlsk q {\bR^n}$.
\end{enumerate}
\end{proposition}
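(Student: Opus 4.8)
The four properties are essentially independent, so I plan to treat them separately: (LSK4) and (LSK7) are constructions to be verified, whereas (LSK5) and (LSK6) are verifications for given kernels. For (LSK4) I would fix a reference kernel $\tilde\rho \in \hlsk q V$, a spatial cutoff $\chi \in \Cinf(\Omega)$ supported in $U \cap V$ and equal to $1$ near $K$, and an $\e$-cutoff $\eta \in \Cinf(I)$ equal to $1$ for small $\e$ and vanishing near $\e = 1$, chosen small enough that $\supp \tilde\phi_{\e,x} \subseteq \ball{C\e}{x} \subseteq V$ whenever $\eta(\e)\chi(x) \ne 0$. Setting $\tilde\psi_{\e,x} = \tilde\rho_{\e,x} + \eta(\e)\chi(x)(\tilde\phi_{\e,x} - \tilde\rho_{\e,x})$, the difference is a well-defined $\ccD(V)$-valued smooth map on the region where the coefficient is nonzero, so $\tilde\psi$ is smooth into $\ccD(V)$ with the correct support. (LSK1) and (LSK2) hold because the scalar cutoffs have $\e$-uniformly bounded $x$-derivatives and are annihilated by $\pd_y$; (LSK3) holds because $\tilde\phi$ and $\tilde\rho$ both reproduce $\pd^\alpha f(x)$, so their difference is of order $O(\e^{q+1})$, which the Leibniz expansion against the bounded derivatives of $\chi$ preserves. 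The identity $\tilde\phi_{\e,x} = \tilde\psi_{\e,x}$ on $K$ for small $\e$ is immediate since there $\eta(\e)\chi(x) = 1$.

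For (LSK5) I would first reduce to $k = 0$. Pairing $x \mapsto \langle u, \D_{X_1}^x \dotsm \D_{X_k}^x \tilde\phi_{\e,x}\rangle$ with a test function $\rho$ and integrating each $\D_{X_i}^x$ by parts in $x$ turns it into $-\Lie_{X_i}$ acting on $\rho$ (using $\Lie_X \rho = \D_X\rho + \Div X \cdot \rho$ and the definition of $\Lie_X u$), so everything reduces to the claim that $\int_\Omega g(x)\,\tilde\phi_{\e,x}\,\ud x \to g$ in $\ccD(\Omega)$ for every $g \in \ccD(\Omega)$. I would prove this by combining three facts: the supports stay in a fixed compact set by (LSK1); testing against any $\psi$ and applying (LSK3) with $\alpha = 0$ gives weak convergence $\int g(x)\,\tilde\phi_{\e,x}\,\ud x \to g$; and, writing $\pd_y = \pd_{x+y} - \pd_x$ and integrating the resulting $\pd_x$'s by parts onto $g$, one gets $\pd_y^\alpha \int g\,\tilde\phi_{\e,x}\,\ud x = \sum_{\gamma \le \alpha}\binom\alpha\gamma \int (\pd^\gamma g)(x)\,(\pd_{x+y}^{\alpha-\gamma}\tilde\phi)_{\e,x}\,\ud x$, whose integrand is $O(\e^{-n})$ on a set of measure $O(\e^n)$ by (LSK2), hence bounded uniformly in $\e$. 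Uniform boundedness in every $C^m$ together with fixed support yields precompactness in $\ccD(\Omega)$ by Arzel\`a--Ascoli, and weak convergence identifies the limit as $g$.

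For (LSK6) the support condition (LSK1) is clear since $\mu, \mu^{-1}$ are locally Lipschitz, and (LSK3) is clean: after the substitution $z = \mu y$ (which absorbs the Jacobian factor), pulling $\pd_x^\alpha$ out of the integral and putting $g = f \circ \mu^{-1}$, property (LSK3) for $\tilde\phi$ gives $\int g(z)\tilde\phi_{\e,\mu x}(z)\,\ud z = g(\mu x) + r_\e(\mu x)$ with all derivatives of $r_\e$ of order $O(\e^{q+1})$; since $g \circ \mu = f$, the chain rule produces $\pd^\alpha f(x) + O(\e^{q+1})$. The genuine work is (LSK2). I would differentiate $(\mu^*\tilde\phi)_{\e,x}(y) = \tilde\phi_{\e,\mu x}(\mu y)\cdot\abso{\det \D\mu(y)}$ by Leibniz, noting the Jacobian factor and its derivatives are $O(1)$, and estimate $\pd_{x+y}^\alpha\pd_y^\beta[\tilde\phi_{\e,\mu x}(\mu y)]$ by the mechanism of Remark \ref{remark}: each $\pd_y$ produces via the chain rule a pure spatial derivative of $\tilde\phi$, hence one extra factor $\e^{-1}$, while each $\pd_{x+y}$ is split as $\sum_k (\pd_{w_k} + \pd_{z_k})\tilde\phi \cdot \pd_i\mu^k(x) + \sum_k (\pd_{z_k}\tilde\phi)(\pd_i\mu^k(y) - \pd_i\mu^k(x))$, where the first group is a $\pd_{x+y}$-derivative of $\tilde\phi$ of order $O(\e^{-n-\abso\beta})$ and the second trades the $O(\e)$ smallness of $\pd_i\mu^k(y) - \pd_i\mu^k(x)$ on the support (where $\abso{y-x} = O(\e)$) against the extra $\e^{-1}$ from a spatial derivative. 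An induction organised by Fa\`a di Bruno then yields the bound $O(\e^{-n-\abso\beta})$.

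Finally, for (LSK7) I would exploit that the argument $\e_j(y-x)/\e + x_j$ depends on $x$ and $y$ only through $y - x$. Smoothness and (LSK1) are routine: by Lemma \ref{lambdalemma} the family $(\lambda_j)$ is locally finite in $\e$, so the sum is locally finite, and the substitution shows each summand is supported in $\ball{C\e}{x}$. The decisive point for (LSK2) is that $\pd_{x+y}$ annihilates any function of $y - x$, so it hits only the polynomial factors $(x-x_j)^\beta/\beta!$ and introduces no negative power of $\e$, whereas each $\pd_y$ contributes a factor $\e_j/\e$ with a spatial derivative of $\tilde\phi_\beta$; the $(\e_j/\e)^{n+\abso\beta}$ thus produced exactly cancels the $O(\e_j^{-n-\abso\beta})$ from (LSK2) for $\tilde\phi_\beta$, leaving $O(\e^{-n-\abso\beta})$ after summing the finitely many active $j$. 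For (LSK3) I would change variables back to $u$, cancelling the factor $(\e_j/\e)^n$, so that the $\beta = 0$ summand, using $\tilde\phi_0 \in \hlsk q \Omega$, reproduces $(\pd^\alpha f)(x)$ (only $\gamma = \alpha$ survives the Leibniz rule, the polynomial factor being constant), while every $\beta \ne 0$ summand uses $\tilde\phi_\beta \in \hlsk{q0}\Omega$ and contributes only $O(\e_j^{q+1})$; the comparability $\e_j \approx \e$ on $\supp\lambda_j$ and $\sum_j \lambda_j = 1$ turn these into $(\pd^\alpha f)(x) + O(\e^{q+1})$. I expect the main obstacle of the whole proposition to be the $\e$-uniform estimate (LSK2) for the pullback in (LSK6): it is the one place where differentiation and the diffeomorphism interact to force the delicate near-diagonal cancellation of Remark \ref{remark}, all remaining estimates being either structural cancellations as in (LSK7) or soft compactness and continuity arguments as in (LSK5).
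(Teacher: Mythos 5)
Your treatment of (LSK4), (LSK6) and the (LSK2) part of (LSK7) follows the paper's proof in all essentials: the convex interpolation $\lambda(\e)\chi(x)\tilde\phi_{\e,x}+(1-\lambda(\e)\chi(x))\tilde\psi^\circ_{\e,x}$ for (LSK4) is literally the paper's construction, and for (LSK6) you identify the same near-diagonal cancellation (the factors $\pd^{l}\mu(y)-\pd^{l}\mu(x)=O(\e)$ on the support trading against one $\e^{-1}$ per $\pd_y$) that the paper organises via the multivariate Fa\`a di Bruno formula in the coordinates $T(x,y)=(x,y-x)$; your reformulation of the (LSK3) part via the error $r_\e$ with all derivatives $O(\e^{q+1})$ is an equivalent, slightly cleaner packaging. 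For (LSK5) you take a genuinely different route: the paper writes $u$ locally as $(-\pd)^\beta f$ with $f$ continuous via the structure theorem, expands $\pd_y^\beta=(\pd_{x+y}-\pd_x)^\beta$ and splits $f(y)=(f(y)-f(x))+f(x)$, whereas you prove the stronger statement $\int g(x)\tilde\phi_{\e,x}\,\ud x\to g$ in $\ccD(\Omega)$ by weak convergence plus uniform $C^m$-bounds and Arzel\`a--Ascoli. Your argument is correct (the bound $\pd_y^\alpha\int g\,\tilde\phi_{\e,x}\,\ud x=O(1)$ via $\pd_y=\pd_{x+y}-\pd_x$ and integration by parts is exactly right) and arguably more conceptual; the paper's version avoids compactness arguments in $\ccD$ at the price of invoking the structure theorem.

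The one genuine gap is in the (LSK3) verification of (LSK7). After substituting $u=\e_j(y-x)/\e+x_j$, the function paired with $(\tilde\phi_{\delta'})_{\e_j,x_j}$ is $u\mapsto f(\e(u-x_j)/\e_j+x)$, which depends on $\e$, $j$ and $x$. You cannot apply (LSK3') to it directly, because the $O(\e^{q+1})$ constant in (LSK3') depends on the (fixed) smooth function being tested; asserting that each $\beta\ne 0$ summand "contributes only $O(\e_j^{q+1})$" therefore begs the question. The paper's proof does its real work exactly here: it Taylor-expands $f$ about $x$ to order $q$, applies the moment conditions to the \emph{fixed} monomials $(u-x_j)^\gamma$ (uniformly in $x_j\in K$), and then has to track the resulting factors $(\e/\e_j)^{\abso{\gamma}}$ in the principal part and the factor $\e^{q+1}(\e_j)^{-\abso{\gamma}}(u-x_j)^\gamma$ in the remainder. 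Relatedly, the "comparability $\e_j\approx\e$ on $\supp\lambda_j$" you invoke is not available: on $\supp\lambda_j=[\e_{j+1},\e_{j-1}]$ one only knows $\e_{j+1}\le\e\le\e_{j-1}$, and the ratios $\e_j/\e_{j+1}$ are completely uncontrolled since the sequence $(\e_j)$ is arbitrary subject to $\e_{j+1}<\e_j<1/j$. So this step needs to be replaced by the explicit Taylor-expansion bookkeeping; as written, the $\beta\ne0$ estimate does not close.
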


\begin{remark}(LSK4) is of value in several proofs, essentially stating that during testing smoothing kernels can be restricted and extended as needed. In (LSK5) one can equivalently demand that $\langle u, (\D_{X_1}^x + \Lie_{X_1}^y) \dotsm (\D_{X_1}^x + \Lie_{X_k}^y) \tilde \phi_{\e,x} \rangle$ converges to $0$ for $k>0$ and to zero for $k=0$. (LSK7) gives smoothing kernels taking prescribed values at chosen points and is needed to prove stability of moderateness and negligibility under directional derivatives.
\end{remark}

We can now formulate the definitions of moderateness and negligibility.

\begin{definition}\label{locmodneg}\begin{enumerate}
\item[(i)] $R \in \hE\Omega$ is called \emph{moderate} if $\forall K \csub \Omega$ $\forall \alpha \in \bN_0^n$ $\exists q \in \bN_0$ $\exists N \in \bN$ $\forall \tilde\phi \in \hlsk q\Omega$: $\sup_{x \in K} \abso{\pd_x^\alpha (R(\tilde\phi_{\e,x}, x))} = O(\e^{-N})$. The set of all moderate elements of $\hE\Omega$ is denoted by $\hEm\Omega$.
 \item[(ii)] $R \in \hE\Omega$ is called \emph{negligible} if $\forall K \csub \Omega$ $\forall \alpha\in\bN_0^n$ $\forall m \in \bN$ $\exists q \in \bN_0$ $\forall \tilde\phi\in \hlsk q \Omega$: $\sup_{x \in K} \abso{\pd_x^\alpha (R(\tilde\phi_{\e,x}, x))} = O(\e^m)$. The set of all negligible elements of $\hE\Omega$ is denoted by $\hN\Omega$.
\end{enumerate}
\end{definition}

\begin{remark}\label{technical}In the original definition of $\cG^d$ the moderateness test (translated to the formalism using smoothing kernels) had to be satisfied for \emph{all} $\tilde \phi \in \hlsk 0\Omega$; because this produces a purely technical artefact in the definition of point values and manifold-valued functions (\cite{mfval,punktwerte}) we prefer the test with $\tilde \phi \in \hlsk q\Omega$ for some $q$, where this does not appear. And what's more, this gives in fact an isomorphic algebra, as has been shown in \cite{Jelinek2}. Furthermore, we have stronger conditions on the smoothing kernels than \cite{found}, which only requires $\alpha=0$ in (LSK3), but the resulting algebras are again isomorphic (\cite[Corollary 16.8]{found}).
\end{remark}

As in other variants of the theory the negligibility test is simplified if the tested function is already known to be moderate; the proof uses the same argument as in all the other variants of the theory (\cite[Theorem 1.2.3]{GKOS}).

\begin{proposition}$R \in \hEm\Omega$ is negligible if and only if Definition \ref{locmodneg} (ii) holds for $\alpha=0$, i.e., 
 $\forall K \csub \Omega$ $\forall m \in \bN$ $\exists q \in \bN_0$ $\forall \tilde\phi\in \hlsk q \Omega$: $\sup_{x \in K} \abso{R(\tilde\phi_{\e,x}, x)} = O(\e^m)$.
\end{proposition}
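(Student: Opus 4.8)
The forward implication is trivial: specializing Definition \ref{locmodneg}(ii) to $\alpha = 0$ gives exactly the asserted condition. The content lies in the converse, so assume $R \in \hEm\Omega$ and that the $\alpha = 0$ estimate holds. Writing $r_{\tilde\phi}(\e,x) \coleq R(\tilde\phi_{\e,x}, x)$, the goal is to recover $\sup_{x\in K}\abso{\pd_x^\alpha r_{\tilde\phi}(\e,x)} = O(\e^m)$ for every $K$, every $\alpha$ and every $m$, using control of $r_{\tilde\phi}$ itself together with the merely polynomial moderateness bounds on its derivatives.

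The plan is an induction on $\abso\alpha$, the case $\abso\alpha = 0$ being the hypothesis. The engine is the elementary one-dimensional interpolation inequality from Taylor's theorem: if $g(\e,\cdot)$ is smooth on a segment $[x, x + h e_i] \subseteq \Omega$, then $\pd_i g(\e,x) = h^{-1}(g(\e, x+he_i) - g(\e,x)) - \tfrac{h}{2}\pd_i^2 g(\e,\xi)$ for some intermediate $\xi$, whence $\abso{\pd_i g(\e,x)} \le \tfrac{2}{h}\sup\abso{g} + \tfrac{h}{2}\sup\abso{\pd_i^2 g}$. This trades a small upper bound on $g$ and a polynomial upper bound on $\pd_i^2 g$ for a small bound on $\pd_i g$, after optimizing the step size $h$.

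Concretely, to pass from order $k$ to order $k+1$ I would fix $K$, a multi-index $\alpha$ with $\abso\alpha = k+1$, and a target $m$. Choose $L$ with $K \csub L \csub \Omega$ and a direction $i$ with $\alpha_i \ge 1$, and write $\alpha = \beta + e_i$ with $\abso\beta = k$, so that $g \coleq \pd_x^\beta r_{\tilde\phi}$ satisfies $\pd_i g = \pd_x^\alpha r_{\tilde\phi}$ and $\pd_i^2 g = \pd_x^{\beta + 2e_i} r_{\tilde\phi}$. Moderateness applied to $\beta + 2e_i$ on $L$ yields $q_1$ and $N$ with $\sup_L\abso{\pd_i^2 g} = O(\e^{-N})$ for all $\tilde\phi \in \hlsk{q_1}\Omega$; the induction hypothesis applied to $\beta$ on $L$ with the enlarged target $m' \coleq 2m + N$ yields $q_2$ with $\sup_L\abso{g} = O(\e^{m'})$ for all $\tilde\phi \in \hlsk{q_2}\Omega$. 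Setting $q \coleq \max(q_1, q_2)$ and choosing $h \coleq \e^{(m'+N)/2}$ (which tends to $0$, so $x + h e_i \in L$ for $x \in K$ and small $\e$), the interpolation inequality gives $\sup_{x\in K}\abso{\pd_x^\alpha r_{\tilde\phi}(\e,x)} = O(\e^{(m'-N)/2}) = O(\e^m)$, closing the induction.

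Two bookkeeping points make this rigorous, and they are where the only subtlety resides. First, the quantifier on $q$ is harmless because the kernel spaces are nested, $\hlsk q\Omega \subseteq \hlsk{q'}\Omega$ whenever $q \ge q'$ (a kernel with (LSK3)-error $O(\e^{q+1})$ a fortiori has error $O(\e^{q'+1})$); hence any $\tilde\phi \in \hlsk q\Omega$ simultaneously lies in $\hlsk{q_1}\Omega$ and $\hlsk{q_2}\Omega$, so both estimates above apply to it. Second, one must enlarge the target from $m$ to $m' = 2m + N$ before invoking the induction hypothesis, since the interpolation returns only "half" of the available smallness after paying for the polynomial factor $\e^{-N}$. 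I expect no genuine obstacle: the whole difficulty is this interplay of the $q$- and $m$-quantifiers across the induction and keeping the Taylor segment inside $L$, both resolved by the nesting and by $h \to 0$. This is precisely the mechanism of \cite[Theorem 1.2.3]{GKOS}, transported to the present smoothing-kernel formalism.
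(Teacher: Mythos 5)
Your proposal is correct and follows essentially the same route as the paper's proof: an induction on the order of the derivative, using the Taylor/interpolation identity with step size $h=\e^{m+N}$ to trade the $O(\e^{2m+N})$ bound on $\pd_x^\beta$ (from the inductive hypothesis with enlarged target) against the $O(\e^{-N})$ moderateness bound on $\pd_x^{\beta+2e_i}$, with $q=\max(q_1,q_2)$ handling the quantifier on the kernel order. The two bookkeeping points you flag (nesting of the $\hlsk q\Omega$ and the enlargement $m\mapsto 2m+N$) are exactly the ones the paper's argument relies on.
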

\begin{proof}
 Suppose $R$ satisfies Definition \ref{locmodneg} (ii) for $\alpha=\alpha_0 \in \bN_0^n$ and fix sets $K_0 \csub L \csub \Omega$, $m_0 \in \bN$ and $1 \le i \le n$. Testing $R$ for moderateness on $L$ with $\alpha=\alpha_0 + 2 e_i$ gives $q_1 \in \bN_0$ and $N \in \bN$. By assumption the negligibility test on $L$ with $\alpha=\alpha_0$ and $m = 2m_0+N$ gives some $q_2 \in \bN_0$. Take $q = \max(q_1, q_2)$ and $\tilde\phi \in \hlsk q \Omega$. Define $f_\e \in \Cinf(\Omega)$ by $f_\e(x) = \pd_x^{\alpha_0}(R(\tilde\phi_{\e,x},x))$. Then for small $\e$, $x + [0,1] \cdot \e^{m_0+N}e_i \subseteq L$ for all $x \in K_0$, so $f_\e(x + \e^{m_0+N}e_i) = f_\e(x) + (\pd_{x_i} f_\e)(x)\e^{m_0+N} + \int_0^1(1-t)(\pd_i^2 f_\e)(x + t\e^{m_0 + N}e_i)\e^{2m_0 + 2N}\,\ud t$. Then $(\pd_{x_i} f_\e)(x)$ is given by $(f_\e(x + \e^{m_0+N}e_i) - f_\e(x)) \cdot \e^{-m_0-N} - \int_0^1(1-t)(\pd_i^2 f_\e)(x + t\e^{m_0 + N}e_i)\e^{m_0 + N}\,\ud t = O(\e^{m_0})$ uniformly for $x \in K_0$, which shows that $R$ satisfies the negligibility test on $K_0$ for $\alpha=\alpha_0 + e_i$ and $m=m_0$. By induction $R$ is negligible.
\end{proof}

\begin{theorem}
(i) $\iota(\Dp\Omega) \subseteq \hEm\Omega$, (ii) $\sigma(\Cinf(\Omega)) \subseteq \hEm\Omega$, (iii) $(\iota - \sigma)(\Cinf(\Omega)) \subseteq \hN\Omega$, (iv) $\iota(\Dp\Omega) \cap \hN\Omega = \{ 0 \}$.
\end{theorem}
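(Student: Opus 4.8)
The plan is to treat the four assertions separately, handling (i)--(iii) by direct estimation from (LSK1)--(LSK3) together with the continuity of distributions, and reserving the real work for (iv), where the convergence statement (LSK5) is the essential tool. Throughout, the object to be estimated is $\pd_x^\alpha(R(\tilde\phi_{\e,x},x))$, in which the kernel enters the first slot of $R$ while $x$ appears in both slots, so the chain rule must account for both dependencies.

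For (ii) I would note that $\sigma f$ depends only on its second argument, so $(\sigma f)(\tilde\phi_{\e,x},x)=f(x)$ and $\pd_x^\alpha(\sigma f)(\tilde\phi_{\e,x},x)=(\pd^\alpha f)(x)$, which is bounded uniformly on any $K\csub\Omega$; hence $\sigma f$ is moderate with $N=1$ and any $q$. For (i), I would first commute $\pd_x^\alpha$ with the pairing, writing $\pd_x^\alpha\langle u,\tilde\phi_{\e,x}\rangle=\langle u,\pd_x^\alpha\tilde\phi_{\e,x}\rangle$, which is legitimate since $\tilde\phi$ is smooth as a $\ccD(\Omega)$-valued map and $u$ is continuous and linear. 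Expanding $\pd_x^\alpha=\sum_{\beta\le\alpha}\binom{\alpha}{\beta}\pd_{x+y}^{\alpha-\beta}(-\pd_y)^\beta$ and invoking (LSK2) gives $\sup_y\abso{\pd_y^\gamma\pd_x^\alpha\tilde\phi_{\e,x}(y)}=O(\e^{-n-\abso{\alpha}-\abso{\gamma}})$ uniformly for $x\in K$. By (LSK1) all supports lie in a fixed $L\csub\Omega$ for small $\e$, so the local seminorm estimate $\abso{\langle u,\psi\rangle}\le C\sum_{\abso{\gamma}\le N_0}\sup\abso{\pd^\gamma\psi}$ for $\psi\in\ccD(L)$ yields moderateness with $N=n+\abso{\alpha}+N_0$ and arbitrary $q$.

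For (iii) I would use that $\iota f$ acts by integration, so $((\iota-\sigma)f)(\tilde\phi_{\e,x},x)=\int_\Omega f(y)\tilde\phi_{\e,x}(y)\,\ud y-f(x)$; differentiating in $x$ turns this into $\int_\Omega f(y)(\pd_x^\alpha\tilde\phi)_{\e,x}(y)\,\ud y-(\pd^\alpha f)(x)$, which is exactly the error term controlled by (LSK3). Given $m$, choosing the order $q=m-1$ makes this $O(\e^{q+1})=O(\e^m)$ uniformly on $K$ for every $\tilde\phi\in\hlsk{q}{\Omega}$, establishing negligibility.

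The decisive part is (iv). Assume $\iota u\in\hN{\Omega}$ and fix $K\csub\Omega$. Negligibility with $\alpha=0$ and $m=1$ produces some $q$ for which $\sup_{x\in K}\abso{\langle u,\tilde\phi_{\e,x}\rangle}=O(\e)$ holds for every $\tilde\phi\in\hlsk{q}{\Omega}$; since $\hlsk{q}{\Omega}\subseteq\hlsk{0}{\Omega}$ and these spaces are nonempty, I may pick one such kernel, and as $K$ was arbitrary this reads as locally uniform convergence $x\mapsto\langle u,\tilde\phi_{\e,x}\rangle\to0$, whence convergence to $0$ in $\ccD'(\Omega)$. On the other hand, (LSK5) with $k=0$ asserts that $x\mapsto\langle u,\tilde\phi_{\e,x}\rangle$ converges to $u$ in $\ccD'(\Omega)$ as $\e\to0$. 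Uniqueness of the distributional limit forces $u=0$, and thus $\iota u=0$. The point requiring care is the quantifier matching: one must secure a single kernel that simultaneously witnesses the negligibility bound (order $q$) and feeds (LSK5) (order $0$), which is precisely why the inclusion $\hlsk{q}{\Omega}\subseteq\hlsk{0}{\Omega}$ and the non-emptiness guaranteed by the constructions of Section~\ref{sec_testobj} are invoked; everything else is routine.
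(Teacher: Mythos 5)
Your proposal is correct and follows essentially the same route as the paper: (ii) is immediate, (i) uses the seminorm estimate for $u$ on a fixed compact neighbourhood together with (LSK1) and (LSK2), (iii) is exactly the (LSK3) estimate with $q=m-1$, and (iv) plays the negligibility bound against the convergence statement (LSK5). Your explicit remark on the quantifier matching in (iv) via $\hlsk{q}{\Omega}\subseteq\hlsk{0}{\Omega}$ is a point the paper leaves implicit, but the argument is the same.
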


\begin{proof}
(i) Let $u \in \Dp\Omega$ be given. Fix $K \csub L \csub \Omega$, $\alpha \in \bN_0^n$ and set $q=0$. Given $\tilde\phi \in \hlsk q \Omega$ the moderateness test involves estimating $\pd_x^\alpha((\iota u)(\tilde\phi_{\e,x}, x)) = \pd_x^\alpha \langle u, \tilde\phi_{\e,x} \rangle = \langle u, \pd_x^\alpha \tilde\phi_{\e,x} \rangle$ for $x \in K$. By (LSK1) $\tilde\phi_{\e,x}$ and its derivatives 
have support in $L$ for small $\e$ and $x \in K$, so by the usual seminorm estimate for distributions
and (LSK2) there exist some $C>0$ and $m \in \bN$ depending only on $u$ and $L$ such that this expression can be estimated by $C \sup_{\abso{\beta}\le m,x \in K,y \in L} \abso{ \pd_y^\beta \pd_x^\alpha\tilde\phi_{\e,x}(y) } = O(\e^{-n-\abso{\alpha}-\abso{\beta}})$.

(ii) is clear because derivatives of $f \in \Cinf(\Omega)$ are bounded on compact sets independently of $\e$.

(iii) For $K \csub \Omega$, $\alpha \in \bN_0^n$, $f \in \Cinf(\Omega)$ and $m \in \bN$ we have for all $\tilde \phi \in \hlsk {m-1}\Omega$ that $\pd_x^\alpha ((\iota f)(\tilde\phi_{\e,x},x)) = \langle f, (\pd_x^\alpha \tilde\phi)_{\e,x} \rangle = (\pd^\alpha f)(x) + O(\e^m) = \pd_x^\alpha ((\sigma f)(\tilde \phi_{\e,x}, x)) + O(\e^m)$ uniformly for $x \in K$ by (LSK3).

(iv) Let $u \in \Dp\Omega$ with $\iota u \in \hN\Omega$ and $\varphi \in \ccD(\Omega)$. Then with $\tilde \phi \in \hlsk q \Omega$ for some $q$ the function in $x$ given by $\langle u, \tilde \phi_{\e,x} \rangle$ converges to $0$ uniformly for $x \in \supp \varphi$ when $\e \to 0$ because of negligibility of $\iota u$, thus $\langle \langle u, \tilde \phi_{\e,x}\rangle, \varphi(x) \rangle$ converges to $0$. On the other hand, by (LSK5) $\langle u, \tilde \phi_{\e,x} \rangle$ converges to $u$ in $\ccD'(\Omega)$, which implies $u=0$.
\end{proof}

The following is easily verified with the respective definitions.

\begin{theorem}$\hEm\Omega$ is a subalgebra of $\hE\Omega$ and $\hN\Omega$ is an ideal in $\hEm\Omega$.
\end{theorem}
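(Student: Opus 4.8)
The plan is to reduce everything to the elementary facts that finite sums and products of quantities of the form $O(\e^{-N})$ and $O(\e^m)$ again have the expected order, combined with the Leibniz rule, once one has arranged for the competing tests of the two factors to be evaluated on one and the same smoothing kernel. The latter is the only point requiring a preliminary observation: raising the order of a smoothing kernel only strengthens (LSK3), so one has the nesting $\hlsk{q'}\Omega \subseteq \hlsk q\Omega$ whenever $q' \ge q$; hence finitely many order parameters obtained from separate tests can always be unified by passing to their maximum. Note also that negligibility implies moderateness — a bound of order $O(\e^m)$ is in particular $O(1)$, hence of the moderate form $O(\e^{-N})$ — so that $\hN\Omega \subseteq \hEm\Omega$, which is what makes the ideal statement meaningful.

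For $\hEm\Omega$, closedness under the vector space operations is immediate from linearity of $\pd_x^\alpha$ and of the evaluation $R \mapsto R(\tilde\phi_{\e,x},x)$: given $K \csub \Omega$ and $\alpha$, testing the two summands yields witnesses $(q_R,N_R)$ and $(q_S,N_S)$; setting $q = \max(q_R,q_S)$ and $N = \max(N_R,N_S)$, every $\tilde\phi \in \hlsk q\Omega$ validates both individual estimates and hence the bound for $R+S$. For the product I would expand $\pd_x^\alpha\bigl(R(\tilde\phi_{\e,x},x)\,S(\tilde\phi_{\e,x},x)\bigr)$ by the Leibniz rule into a finite sum of terms $\pd_x^\gamma(R(\tilde\phi_{\e,x},x))\cdot\pd_x^{\alpha-\gamma}(S(\tilde\phi_{\e,x},x))$ with $\gamma \le \alpha$. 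Testing $R$ on $K$ with each index $\gamma$ and $S$ with each $\alpha-\gamma$ produces finitely many pairs; taking $q$ to be the maximum of all the resulting orders and $N \coleq \max_{\gamma\le\alpha}(N_{R,\gamma}+N_{S,\alpha-\gamma})$, each term is $O(\e^{-N_{R,\gamma}})\,O(\e^{-N_{S,\alpha-\gamma}}) = O(\e^{-N})$ uniformly on $K$, and summing the finitely many terms preserves the order. Together with $\sigma(1) \in \hEm\Omega$ this gives the subalgebra property.

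For the ideal property it remains to verify absorption, negligibility being closed under addition by the same maximum-argument as above. Let $R \in \hEm\Omega$, $S \in \hN\Omega$ and fix $K$, $\alpha$ and $m \in \bN$. Expanding $\pd_x^\alpha$ of the product by Leibniz again, I would for each $\gamma \le \alpha$ first test $R$ for moderateness on $K$ with index $\gamma$ to obtain $(q_{R,\gamma},N_{R,\gamma})$, and then test $S$ for negligibility on $K$ with index $\alpha-\gamma$ and the raised exponent $m+N_{R,\gamma}$ to obtain $q_{S,\gamma}$. With $q$ the maximum of all these finitely many orders, every term satisfies $\pd_x^\gamma(R(\tilde\phi_{\e,x},x))\cdot\pd_x^{\alpha-\gamma}(S(\tilde\phi_{\e,x},x)) = O(\e^{-N_{R,\gamma}})\,O(\e^{m+N_{R,\gamma}}) = O(\e^m)$ uniformly on $K$, so the whole sum is $O(\e^m)$ and $R\,S \in \hN\Omega$.

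The computations are entirely routine; the only thing that needs care, and the point I would emphasise, is the quantifier bookkeeping. Since the order $q$ occurs existentially in both tests while the smoothing kernel $\tilde\phi$ is then quantified universally, one must ensure that a single $q$ (hence a single family $\hlsk q\Omega$) validates simultaneously all estimates collected from the finitely many sub-tests; this is exactly what the monotonicity $\hlsk{q'}\Omega \subseteq \hlsk q\Omega$ secures. No analytic difficulty arises beyond this, which is why the statement can fairly be called easily verified.
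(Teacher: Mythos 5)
Your proposal is correct and is exactly the routine verification the paper has in mind (the paper omits the proof entirely, stating only that the theorem ``is easily verified with the respective definitions''). The two points you rightly single out --- the nesting $\hlsk{q'}\Omega \subseteq \hlsk{q}\Omega$ for $q' \ge q$, which lets the existentially quantified orders from the separate tests be unified by a maximum, and the raising of the negligibility exponent to $m+N_{R,\gamma}$ in the absorption step --- are precisely what makes the standard Leibniz-rule argument go through here.
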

%
We can now define the algebra of generalized functions on $\Omega$ (isomorphic to $\cG^d(\Omega)$ of \cite{found}) as the quotient of moderate modulo negligible functions.
\begin{definition}$\hG\Omega \coleq \hEm\Omega / \hN\Omega$.
\end{definition}

Diffeomorphism invariance of $\hat\cG$ now follows from (LSK6).

\begin{proposition}\label{diffinv}Let $\mu\colon \Omega \to \Omega'$ be a diffeomorphism. Then $\mu^*(\hEm{\Omega'}) \subseteq \hEm\Omega$ and $\mu^*(\hN{\Omega'}) \subseteq \hN\Omega$, thus $\mu$ is well-defined on $\hat\cG$ by its action on representatives.
\end{proposition}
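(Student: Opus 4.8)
The plan is to use (LSK6) to turn any moderateness or negligibility test of $\mu^* R$ on $\Omega$ into the corresponding test of $R$ on $\Omega'$, and then to absorb $\mu$ via the chain rule. Write $\nu \coleq \mu^{-1}\colon \Omega' \to \Omega$, so $\mu_* = \nu^*$. The computation on which everything rests is that for $\tilde\psi \in \hlsk q\Omega$ and $x \in \Omega$ one has $(\mu^* R)(\tilde\psi_{\e,x}, x) = R(\mu_*\tilde\psi_{\e,x}, \mu x) = R((\nu^*\tilde\psi)_{\e,\mu x}, \mu x)$, where $\nu^*\tilde\psi$ is the pullback of the \emph{smoothing kernel} $\tilde\psi$ along $\nu$. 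This is immediate from the definitions, since $(\nu^*\tilde\psi)_{\e,\mu x} = \nu^*(\tilde\psi_{\e,\nu\mu x}) = \nu^*(\tilde\psi_{\e,x}) = \mu_*\tilde\psi_{\e,x}$. By (LSK6) applied to $\nu$, the pulled-back kernel satisfies $\nu^*\tilde\psi \in \hlsk q{\Omega'}$, so the test of $\mu^* R$ against an arbitrary $\tilde\psi$ of order $q$ becomes a test of $R$ against an order-$q$ kernel on $\Omega'$, evaluated at the transformed point $\mu x$.

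First I would set $F_\e(x') \coleq R((\nu^*\tilde\psi)_{\e,x'}, x')$, so that $(\mu^* R)(\tilde\psi_{\e,x}, x) = F_\e(\mu x)$. By the multivariate chain rule (Faà di Bruno), $\pd_x^\alpha[F_\e(\mu x)]$ is a finite linear combination of terms $(\pd_{x'}^{\alpha'} F_\e)(\mu x)$ with $\abso{\alpha'} \le \abso{\alpha}$, whose coefficients are polynomials in the partial derivatives of the components of $\mu$ of order $\le \abso{\alpha}$. On any $K \csub \Omega$ these coefficients are bounded independently of $\e$ because they depend only on $\mu$, so it suffices to estimate $\sup_{x' \in \mu(K)}\abso{\pd_{x'}^{\alpha'} F_\e(x')}$ for each $\abso{\alpha'} \le \abso{\alpha}$, noting that $\mu(K) \csub \Omega'$ is compact.

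For moderateness I fix $K \csub \Omega$ and $\alpha$ and set $L' \coleq \mu(K)$. Moderateness of $R$ furnishes, for each $\alpha'$ with $\abso{\alpha'} \le \abso{\alpha}$, numbers $q_{\alpha'} \in \bN_0$ and $N_{\alpha'} \in \bN$ with $\sup_{x' \in L'}\abso{\pd_{x'}^{\alpha'} R(\tilde\phi_{\e,x'}, x')} = O(\e^{-N_{\alpha'}})$ for every $\tilde\phi \in \hlsk{q_{\alpha'}}{\Omega'}$. Taking $q \coleq \max_{\abso{\alpha'}\le\abso{\alpha}} q_{\alpha'}$ and $N \coleq \max_{\abso{\alpha'}\le\abso{\alpha}} N_{\alpha'}$, any $\tilde\psi \in \hlsk q\Omega$ yields $\nu^*\tilde\psi \in \hlsk q{\Omega'} \subseteq \hlsk{q_{\alpha'}}{\Omega'}$ (an order-$q$ kernel is a fortiori one of any lower order), so each term is $O(\e^{-N})$ and the first step gives $\sup_{x \in K}\abso{\pd_x^\alpha((\mu^* R)(\tilde\psi_{\e,x},x))} = O(\e^{-N})$; hence $\mu^* R \in \hEm\Omega$. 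The negligibility statement is identical with $m$ in place of the exponents: fixing $K$, $\alpha$, $m$, negligibility of $R$ on $L'$ gives for each $\alpha'$ some $q_{\alpha'}$ with the $O(\e^m)$ bound, and the same reduction with $q \coleq \max q_{\alpha'}$ produces $O(\e^m)$, so $\mu^* R \in \hN\Omega$.

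The only genuine content is the reduction identity combined with (LSK6); once the test is rewritten in terms of $\nu^*\tilde\psi$ at the point $\mu x$, everything else is the chain rule and bookkeeping of quantifiers. The main obstacle I anticipate is purely organisational: making sure that the single order $q$ chosen for $\mu^* R$ works simultaneously for all $\alpha' \le \alpha$ appearing in the chain rule, which is handled by taking a maximum and using the inclusion $\hlsk q{\Omega'} \subseteq \hlsk{q_{\alpha'}}{\Omega'}$. One should also record that the chain-rule coefficients are bounded on $K$ uniformly in $\e$, which is clear since they involve only $\mu$ and not $\tilde\psi$ or $\e$.
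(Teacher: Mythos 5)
Your proposal is correct and is precisely the argument the paper intends: the paper gives no written proof, remarking only that the proposition ``follows from (LSK6)'', and your reduction $(\mu^* R)(\tilde\psi_{\e,x},x)=R((\nu^*\tilde\psi)_{\e,\mu x},\mu x)$ with $\nu=\mu^{-1}$, combined with (LSK6) and the chain rule for the $\pd_x^\alpha$-derivatives, is exactly the expected way to fill that in. The bookkeeping points you flag (taking $q=\max q_{\alpha'}$, the inclusion $\hlsk q{\Omega'}\subseteq\hlsk{q_{\alpha'}}{\Omega'}$ for $q\ge q_{\alpha'}$, and the $\e$-independence of the chain-rule coefficients) are all consistent with how the paper uses these facts elsewhere.
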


From Remark 2 (iii) it now follows that $\iota$ and $\sigma$, considered as maps into $\hat\cG(\Omega)$, also commute with diffeomorphisms.

\section{Sheaf properties}

\begin{definition}
 Let $R \in \hE\Omega$ and $\Omega' \subseteq \Omega$ open. Then the restriction $R|_{\Omega'} \in \hE{\Omega'}$ is defined as $R|_{\Omega'}(\omega, x) \coleq R(\omega, x)$ for $\omega \in \ccD(\Omega') \subseteq \ccD(\Omega)$ and $x \in \Omega'$.
\end{definition}
Employing (LSK4) one immediately obtains that moderateness and negligibility are local properties, which makes restriction well-defined also on the quotient space:

\begin{proposition}\label{localize}
\begin{enumerate}
 \item[(i)]Let $\Omega' \subseteq \Omega$ be open and $R \in \hE\Omega$. If $R$ is moderate or negligible, respectively, then so is $R|_{\Omega'}$.
 \item[(ii)] Let $(U_\alpha)_\alpha$ be an open covering of $\Omega$ and $R \in \hE\Omega$. If for all $\alpha$, $R|_{U_\alpha}$ is moderate or negligible, respectively, then so is $R$.
\end{enumerate}
\end{proposition}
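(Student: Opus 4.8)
The plan is to use (LSK4) as the sole nontrivial ingredient, since it is exactly the mechanism that lets one pass between smoothing kernels on two different open sets while keeping them equal on a prescribed compact set. The only real care needed is that the moderateness and negligibility tests of Definition \ref{locmodneg} involve the $x$-derivatives $\pd_x^\alpha$, so mere equality of two kernels on a compact set $K$ does not suffice. I will therefore always invoke (LSK4) on a slightly larger compact set whose interior contains $K$, so that the two resulting functions of $x$ coincide on an open neighborhood of $K$ and hence have equal derivatives of all orders on $K$.

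For part (i), fix $K \csub \Omega'$ and $\alpha \in \bN_0^n$, and choose $K'$ with $K \csub K' \csub \Omega'$. Moderateness (resp.\ negligibility) of $R$, tested on $K'$ with this $\alpha$ (and, in the negligible case, a given $m$), yields an order $q$ (and exponent $N$). Given any $\tilde\phi \in \hlsk q {\Omega'}$, I apply (LSK4) with $U = \Omega'$, $V = \Omega$ and compact set $K'$ to obtain $\e_0 > 0$ and $\tilde\psi \in \hlsk q \Omega$ with $\tilde\phi_{\e,x} = \tilde\psi_{\e,x}$ for $\e < \e_0$ and $x \in K'$. Then $R|_{\Omega'}(\tilde\phi_{\e,x},x) = R(\tilde\psi_{\e,x},x)$ on the interior of $K'$ for such $\e$, so the two functions of $x$ agree on a neighborhood of $K$ and $\sup_{x \in K}\abso{\pd_x^\alpha(R|_{\Omega'}(\tilde\phi_{\e,x},x))} = \sup_{x\in K}\abso{\pd_x^\alpha(R(\tilde\psi_{\e,x},x))}$, which is $O(\e^{-N})$ (resp.\ $O(\e^m)$) by the test for $R$ applied to $\tilde\psi$.

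For part (ii), fix $K \csub \Omega$, $\alpha$ (and $m$ in the negligible case). By compactness choose finitely many $U_{\alpha_1},\dotsc,U_{\alpha_r}$ covering $K$ together with compact sets $K_i \subseteq U_{\alpha_i}$ with $K = \bigcup_i K_i$ (a standard covering argument, e.g.\ via a subordinate partition of unity); note $K_i \csub U_{\alpha_i}$ automatically since $U_{\alpha_i}$ is open. For each $i$ the hypothesis on $R|_{U_{\alpha_i}}$, tested on a set $K_i'$ with $K_i \csub K_i' \csub U_{\alpha_i}$, gives an order $q_i$ and exponent $N_i$; set $q = \max_i q_i$ and $N = \max_i N_i$. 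Given $\tilde\phi \in \hlsk q \Omega$, for each $i$ I use (LSK4) with $U = \Omega$, $V = U_{\alpha_i}$ and compact set $K_i'$ to produce $\e_0^{(i)}$ and $\tilde\psi^{(i)} \in \hlsk q {U_{\alpha_i}}$ agreeing with $\tilde\phi$ there. Since a kernel of order $q$ satisfies the sharper bound in (LSK3) and is thus in particular of order $q_i \le q$, one has $\tilde\psi^{(i)} \in \hlsk {q_i}{U_{\alpha_i}}$, and on a neighborhood of $K_i$ one gets $R(\tilde\phi_{\e,x},x) = R|_{U_{\alpha_i}}(\tilde\psi^{(i)}_{\e,x},x)$ for $\e < \e_0^{(i)}$. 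Applying $\pd_x^\alpha$ and taking the supremum over $K_i$ yields $O(\e^{-N_i})$ (resp.\ $O(\e^m)$) from the hypothesis on $R|_{U_{\alpha_i}}$; since $K = \bigcup_i K_i$ is a finite union, $\sup_{x\in K} = \max_i \sup_{x\in K_i}$, and for $\e < \min_i \e_0^{(i)}$ the estimate $O(\e^{-N})$ (resp.\ $O(\e^m)$) follows.

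The main obstacle is precisely the derivative issue flagged at the outset: one must guarantee that the two kernels agree not only on $K$ but on an open neighborhood, which is why (LSK4) is invoked on the enlarged sets $K'$ and $K_i'$. Once that is arranged, the remainder is finite-cover bookkeeping, collecting the orders $q_i$, the exponents $N_i$ and the thresholds $\e_0^{(i)}$ into a single triple via maxima and a minimum.
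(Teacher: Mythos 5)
Your proof is correct and follows exactly the route the paper intends: the paper omits the details and merely notes that the proposition follows ``immediately'' from (LSK4), and your argument supplies precisely that mechanism, including the genuinely necessary refinement of invoking (LSK4) on an enlarged compact set so that the kernels agree on a neighborhood of $K$ and the $\pd_x^\alpha$-derivatives match. The finite-cover bookkeeping in (ii), with $q=\max_i q_i$ and the inclusion $\hlsk{q}{U_{\alpha_i}}\subseteq\hlsk{q_i}{U_{\alpha_i}}$, is also handled correctly.
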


\begin{definition}
Let $\hat T \in \hG\Omega$ and $\Omega' \subseteq \Omega$. Then the restriction $\hat T|_{\Omega'} \in \hG{\Omega'}$ of $\hat T$ to $\Omega'$ is defined as $\hat T|_{\Omega'} \coleq T|_{\Omega'} + \hN{\Omega'}$ where $T \in \hEm\Omega$ is any representative of $\hat T$.
\end{definition}

\begin{proposition}\label{itsasheaf}$\hat\cG$ is a fine sheaf of differential algebras.
\end{proposition}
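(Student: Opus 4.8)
The plan is to establish the three components of the statement separately: that $\hat\cG$ is a presheaf, that it satisfies the sheaf axioms (locality and gluing), and that it is \emph{fine}, while observing along the way that all structure maps respect the differential algebra structure. The presheaf property is essentially bookkeeping: restriction $\hat T|_{\Omega'}$ is well-defined by the preceding definition together with Proposition \ref{localize}, and one checks functoriality of restriction (compatibility with compositions $\Omega'' \subseteq \Omega' \subseteq \Omega$ and the identity restriction) directly from $R|_{\Omega'}(\omega,x) = R(\omega,x)$. Since restriction is evidently an algebra homomorphism at the level of representatives and commutes with $\hat\Lie_X$ by the formula in Definition \ref{basedef}(iii), these are morphisms of differential algebras.

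For the sheaf axioms, fix an open cover $(U_\alpha)_\alpha$ of an open set $\Omega$. Locality (separatedness) asks that if $\hat T \in \hG\Omega$ satisfies $\hat T|_{U_\alpha} = 0$ for all $\alpha$, then $\hat T = 0$; equivalently, a representative $R$ with each $R|_{U_\alpha}$ negligible must itself be negligible, which is precisely Proposition \ref{localize}(ii) applied to negligibility. Gluing asks that given sections $\hat T_\alpha \in \hG{U_\alpha}$ agreeing on overlaps, $\hat T_\alpha|_{U_\alpha \cap U_\beta} = \hat T_\beta|_{U_\alpha \cap U_\beta}$, there exists $\hat T \in \hG\Omega$ restricting to each $\hat T_\alpha$. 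This is where I expect the main work to lie, and I would handle it by the standard partition-of-unity construction: choose a smooth partition of unity $(\chi_\alpha)_\alpha$ subordinate to the cover, pick representatives $R_\alpha \in \hEm{U_\alpha}$, and set $R \coleq \sum_\alpha \sigma(\chi_\alpha) \cdot (R_\alpha \text{ extended by zero outside } \supp\chi_\alpha)$, interpreted carefully so that the sum is locally finite and lands in $\hE\Omega$.

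The delicate point in gluing is that the $R_\alpha$ are only functions on $\ccD(U_\alpha)\times U_\alpha$, so extending them to act on test functions supported in all of $\Omega$ requires the restriction/extension flexibility of smoothing kernels; here (LSK4) is the essential tool, guaranteeing that near any compact set the testing only probes $R_\alpha$ through kernels $\tilde\phi_{\e,x}$ whose supports eventually lie inside $U_\alpha$. One then verifies that $R$ is moderate using Proposition \ref{localize}(ii) and that $R|_{U_\beta} - R_\beta$ is negligible by exploiting the overlap compatibility together with the fact that $\sum_\alpha \chi_\alpha = 1$, so that on $U_\beta$ the difference reduces to a sum of terms $\sigma(\chi_\alpha)(R_\alpha - R_\beta)|_{U_\alpha\cap U_\beta}$, each negligible by hypothesis. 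Uniqueness of the glued section follows from the already-established locality.

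Finally, fineness means the sheaf admits partitions of unity, equivalently that for any locally finite cover there are endomorphisms subordinate to the cover summing to the identity. This follows immediately once we observe that multiplication by $\sigma(\chi_\alpha)$ for a smooth partition of unity $(\chi_\alpha)_\alpha$ gives sheaf endomorphisms of $\hat\cG$: each $\sigma(\chi_\alpha)$ is moderate by part (ii) of the embedding theorem, multiplication preserves moderateness and negligibility since $\hEm\Omega$ is an algebra and $\hN\Omega$ an ideal, $\supp\sigma(\chi_\alpha)$ is contained in $\supp\chi_\alpha$, and $\sum_\alpha \sigma(\chi_\alpha) = \sigma(1)$ is the identity. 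The only nontrivial ingredient throughout is the gluing step, and its only genuinely technical component is the controlled extension of representatives, which (LSK4) is designed to supply.
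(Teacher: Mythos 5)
Your overall architecture matches the paper's: gluing by a locally finite partition of unity $(\chi_j)_j$, moderateness and the restriction identity checked via (LSK1) and (LSK4), locality/uniqueness from Proposition \ref{localize}(ii), and fineness from the $C^\infty$-module structure (the paper cites softness of $C^\infty$ and \cite{Bredon}; your explicit endomorphisms $\sigma(\chi_\alpha)\cdot$ are the same content unpacked).

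There is, however, one genuine gap at the definitional step of the gluing, which you flag as ``the delicate point'' but do not actually resolve. The candidate $R = \sum_\alpha \sigma(\chi_\alpha)\cdot R_\alpha$ must first of all be an element of the basic space $\hE\Omega = \Cinf(\ccD(\Omega)\times\Omega)$, i.e.\ a smooth function accepting an \emph{arbitrary} test function $\omega\in\ccD(\Omega)$ in its first slot; but $R_\alpha$ only accepts elements of $\ccD(U_\alpha)$. ``Extending $R_\alpha$ by zero outside $\supp\chi_\alpha$'' addresses the $x$-variable, not the functional variable, and (LSK4) cannot help here: it governs which smoothing kernels appear during \emph{testing}, whereas the basic-space element must be defined on all of $\ccD(\Omega)\times\Omega$ before any testing happens. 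The paper's device is to fix cutoffs $\theta_j\in\ccD(U_{\lambda(j)})$ equal to $1$ on a neighborhood $\overline{W_j}$ of $\supp\chi_j$ and to insert the smooth linear maps $\pi_j\colon\ccD(\Omega)\to\ccD(U_{\lambda(j)})$, $\pi_j(\omega)=\theta_j\cdot\omega$, setting $T(\omega,x)=\sum_j\chi_j(x)\,T_{\lambda(j)}(\pi_j(\omega),x)$. Only \emph{after} this is (LSK4) combined with (LSK1) used, exactly as you describe, to show that for $x$ near $\supp\chi_j$ and small $\e$ one has $\pi_j(\tilde\phi_{\e,x})=\tilde\phi_{\e,x}=\tilde\psi_{\e,x}$ for a kernel $\tilde\psi$ on $U_{\lambda(j)}$, so that the choice of $\theta_j$ is invisible to the moderateness and negligibility tests. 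With the $\pi_j$ inserted, the rest of your argument (moderateness via Proposition \ref{localize}(ii), negligibility of $T|_{U_\lambda\cap W_k}-T_{\lambda(k)}|_{U_\lambda\cap W_k}$ from the overlap compatibility and $\sum_j\chi_j=1$) goes through as in the paper.
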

\begin{proof}
Let $U \subseteq \bR^n$ be open and $\{U_\lambda\}_\lambda$ an open cover of $U$. Suppose that for each $\lambda$ we are given an element $\hat T_\lambda \in \hG{U_\lambda}$ represented by $T_\lambda \in \hEm{U_\lambda}$ such that $(\hat T_\lambda - \hat T_\mu)|_{U_\lambda \cap U_\mu}$ is zero for all $\lambda$ and $\mu$. We have to show that there exists a generalized function $\hat T \in \hG U$ such that $\hat T|_{U_\lambda} = \hat T_\lambda$ for all $\lambda$.  By Proposition \ref{localize} (ii), $\hat T$ then is unique with this property.

Let $\{\chi_j\}_j$ be a locally finite partition of unity such that each $\chi_j$ has compact support in $U_{\lambda(j)}$ for some $\lambda(j)$. For each $j$ choose an open neighborhood $W_j$ of $\supp \chi_j$ which is relatively compact in $U_{\lambda(j)}$ and a function $\theta_j \in \ccD(U_{\lambda(j)})$ which is $1$ on $\overline{W_j}$. Define $\pi_j \in \Cinf(\ccD(U), \ccD(U_{\lambda(j)}))$ by $\pi_j(\omega) \coleq \theta_j \cdot \omega$ for all $j$ and $T \in \Cinf(\ccD(U) \times U)$ by $T(\omega, x) \coleq \sum_{j} \chi_j(x) \cdot T_{\lambda(j)}(\pi_j(\omega), x)$. Because the family $\{W_j\}_j$ and thus also $\{\supp \chi_j\}_j$ are locally finite this sum is well-defined and smooth.

Fix $K \csub U$ and $\alpha \in \bN_0^n$ for the moderateness test. Because $K$ has an open neighborhood intersecting only finitely many $\supp \chi_j$ there is a finite set $F$ such that for all $\tilde \phi \in \hlsk 0 U$, $\alpha \in \bN_0^n$ and $x \in K$, $\pd_x^\alpha (T(\tilde \phi_{\e,x}, x)) = \sum_{j \in F}\pd_x^\alpha (\chi_j(x) \cdot T_{\lambda(j)}(\pi_j(\tilde \phi_{\e,x}), x))$. For $T$ to be moderate it therefore suffices to show that for each fixed $j \in F$, any $L \csub W_j$ and any $\beta \in \bN_0^n$ there exist $q \in \bN_0$ and $N \in \bN$ such that if $\tilde \phi$ is of order $q$ then $\pd_x^\beta (T_{\lambda(j)}(\pi_j(\tilde \phi_{\e,x}), x)) = O(\e^{-N})$ uniformly for $x$ in $L$.

Fixing $j$, $L$ and $\beta$ there are $q$ and $N$ such that for all $\tilde \psi \in \hlsk q {U_{\lambda(j)}}$ we have $\pd_x^\beta (T_{\lambda(j)}(\tilde \psi_{\e,x},x)) = O(\e^{-N})$ uniformly for $x \in L$. In particular, given $\tilde \phi \in \hlsk q U$ let $\tilde \psi$ be determined by (LSK4) such that $\tilde \psi_{\e, x} = \tilde \phi_{\e,x}$ for small $\e$ and $x$ in an open neighborhood of $L$ whose closure is compact and contained in $W_j$. By (LSK1) then for small $\e$, $\supp \tilde \phi_{\e,x} \subseteq W_j$ for all $x$ in this neighborhood and hence $\pd_x^\beta (T_{\lambda(j)}(\pi_j(\tilde \phi_{\e,x}), x)) = \pd_x^\beta(T_{\lambda(j)}(\tilde \psi_{\e,x}, x))$ for $x \in L$, which implies moderateness of $T$.

Set $\hat T = T + \hN U$. For $\hat T|_{U_\lambda} = \hat T_\lambda$ it suffices by assumption, Proposition \ref{localize} (ii) and because $\{\,W_k\,\}_k$ is an open cover of $U$, to show negligibility of $T|_{U_\lambda \cap W_k} - T_{\lambda(k)}|_{U_\lambda \cap W_k}$ for all $k$. Because $U_\lambda \cap W_k$ is relatively compact there is a finite set $F$ such that $(T - T_{\lambda(k)})|_{U_\lambda \cap W_k}(\omega, x)$ is given by $\sum_{j \in F} \chi_j(x) (T_{\lambda(j)}(\pi_j(\omega), x) - T_{\lambda(k)}(\omega, x))$ on its domain of definition. For testing a single summand for negligibility fix $j \in F$, $K \csub U_\lambda \cap W_k$ and $m \in \bN$. By assumption there exist $q$ and $N$ such that for all $\tilde \psi \in \hlsk q {U_{\lambda(j)} \cap U_{\lambda(k)}}$, $(T_{\lambda(j)} - T_{\lambda(k)})(\tilde\psi_{\e,x}, x) = O(\e^m)$ uniformly for $x \in K \cap \supp \chi_j$. In particular, given $\tilde \phi \in \hlsk q {U_\lambda \cap W_k}$ let $\tilde \psi$ be determined by (LSK4) such that $\tilde \psi_{\e,x} = \tilde \phi_{\e,x}$ for $x \in K \cap \supp \chi_j$ and small $\e$. By (LSK1), the support of $\tilde \phi_{\e,x}$ is contained in $W_j$ for all $x \in K\cap \supp \chi_j$ and small $\e$. This implies $T_{\lambda(j)}(\pi_j(\tilde \phi_{\e,x}), x) = T_{\lambda(j)}(\tilde \psi_{\e,x}, x)$, giving the desired estimate.

That $\hat\cG$ is \emph{fine} sheaf may be inferred from the fact that it is a sheaf of modules over the soft sheaf $C^\infty$ (\cite[Theorem 9.16]{Bredon}).
\end{proof}

\section{Stability under differentiation}

\begin{theorem}\label{diffeoinv}
 Let $R \in \hE\Omega$ and $X \in \Cinf(\Omega, \bR^n)$. Then (a) $R \in \hEm\Omega$ implies $\hat\Lie_X R \in \hEm\Omega$, and (b) $R \in \hN\Omega$ implies $\hat\Lie_X R \in \hN\Omega$.
\end{theorem}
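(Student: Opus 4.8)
The plan is to reduce both parts to a single estimate by combining the definition of $\hat\Lie_X$ with the chain rule. Testing $\hat\Lie_X R$ against a smoothing kernel gives $(\hat\Lie_X R)(\tilde\phi_{\e,x},x) = -\ud_1 R(\tilde\phi_{\e,x},x)(\Lie_X^y\tilde\phi_{\e,x}) + (\D_X^x R)(\tilde\phi_{\e,x},x)$, while differentiating the scalar function $x \mapsto R(\tilde\phi_{\e,x},x)$ along $X$ yields $\D_X^x\bigl(R(\tilde\phi_{\e,x},x)\bigr) = \ud_1 R(\tilde\phi_{\e,x},x)(\D_X^x\tilde\phi_{\e,x}) + (\D_X^x R)(\tilde\phi_{\e,x},x)$. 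Subtracting, the two occurrences of $\ud_1 R$ combine by linearity into a single direction, and I obtain the basic identity
\[
(\hat\Lie_X R)(\tilde\phi_{\e,x}, x) = \D_X^x\bigl(R(\tilde\phi_{\e,x}, x)\bigr) - \ud_1 R(\tilde\phi_{\e,x}, x)\bigl((\D_X^x + \Lie_X^y)\tilde\phi_{\e,x}\bigr).
\]
By Remark \ref{remark} the kernel $\tilde\psi \coleq (\D_X^x + \Lie_X^y)\tilde\phi$ lies in $\hlsk{q0}\Omega$ whenever $\tilde\phi \in \hlsk q\Omega$, so everything is reduced to estimating the two terms on the right.

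The first term is harmless. Since $\D_X^x = \sum_i X^i\,\pd_{x_i}$ has smooth coefficients, which together with all their derivatives are bounded on compacta and independent of $\e$, the quantity $\pd_x^\alpha \D_X^x\bigl(R(\tilde\phi_{\e,x},x)\bigr)$ is a finite combination of terms $\pd_x^{\alpha'}\bigl(R(\tilde\phi_{\e,x},x)\bigr)$ with $\abso{\alpha'} \le \abso\alpha + 1$. Hence the required $O(\e^{-N})$ bound for (a) and $O(\e^m)$ bound for (b) follow immediately from moderateness, resp.\ negligibility, of $R$ tested against the \emph{same} kernel $\tilde\phi$, at the cost of raising the differentiation order by one.

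The real work is the mixed term $\ud_1 R(\tilde\phi_{\e,x},x)(\tilde\psi_{\e,x})$, and this is where I expect the main obstacle. The decisive structural fact is that $\hlsk{q0}\Omega$ is the linear space parallel to the affine space $\hlsk q\Omega$, so that $\tilde\phi + t\tilde\psi \in \hlsk q\Omega$ for every $t \in \bR$; thus $t \mapsto R\bigl((\tilde\phi + t\tilde\psi)_{\e,x},x\bigr)$ is, for each fixed $t$, a legitimate test of $R$, and the quantity I need is its $t$-derivative at $0$,
\[
\ud_1 R(\tilde\phi_{\e,x},x)(\tilde\psi_{\e,x}) = \lim_{h \to 0} \tfrac{1}{h}\bigl(R((\tilde\phi + h\tilde\psi)_{\e,x}, x) - R(\tilde\phi_{\e,x}, x)\bigr),
\]
whose numerators are differences of genuine tests of $R$. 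The plan is to realize this directional derivative through such finite differences; after applying $\pd_x^\alpha$ and expanding by multilinearity one obtains a finite sum of terms each carrying one ``good'' direction arising from $\tilde\psi$ (or an $x$-derivative thereof) together with several ordinary directions from $\tilde\phi$.

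The difficulty is that the estimates granted by moderateness and negligibility are a priori only pointwise in the kernel, so one may neither differentiate the family in $t$ nor choose a step $h = h(\e)$ in a difference quotient without first securing bounds that are \emph{uniform} over the family $\{\tilde\phi + t\tilde\psi : t \in [0,1]\}$ and over the auxiliary directions produced by $\pd_x^\alpha$. This is exactly what (LSK7) is designed to provide: should the desired bound on the mixed term fail, I would extract a sequence $\e_j \downarrow 0$ and points $x_j \in K$ witnessing the violation and feed the offending data into the construction of (LSK7)—using its $\lambda_j$ and the prescribed directions $\tilde\phi_\beta \in \hlsk{q0}\Omega$—to assemble a single smoothing kernel of order $q$ on $\bR^n$ whose behaviour at the sampled points $(\e_j,x_j)$ reproduces the bad configuration. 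Testing $R$ against the restriction of this kernel to $\Omega$, legitimized by (LSK4), then contradicts the assumed moderateness, resp.\ negligibility, of $R$. I expect this diagonalization, rather than the elementary chain-rule bookkeeping, to be the technically decisive step.
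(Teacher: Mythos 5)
Your reduction is sound and in fact coincides with the paper's starting point: the identity $(\hat\Lie_X R)(\tilde\phi_{\e,x},x) = \D_X^x\bigl(R(\tilde\phi_{\e,x},x)\bigr) - \ud_1 R(\tilde\phi_{\e,x},x)\bigl((\D_X^x+\Lie_X^y)\tilde\phi_{\e,x}\bigr)$ is exactly the paper's $(\pd_t-\D_X^x)\bigl(R(\mu_{-t}^*\tilde\phi_{\e,x},\mu_t x)\bigr)\big|_{t=0}$ written out, your disposal of the first term via moderateness/negligibility of $R$ at order $\abso{\alpha}+1$ is correct, and Remark \ref{remark} does place $(\D_X^x+\Lie_X^y)\tilde\phi$ in $\hlsk{q0}\Omega$. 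You also correctly name (LSK7) and a contradiction/diagonalization over a sequence $(\e_j,x_j)$ as the mechanism for the mixed term. But the proposal stops exactly where the proof begins, and the one concrete device you offer would fail: the difference quotient $h^{-1}\bigl(R((\tilde\phi+h\tilde\psi)_{\e,x},x)-R(\tilde\phi_{\e,x},x)\bigr)$ cannot be exploited, because the bounds from testing $R$ on $\tilde\phi+h\tilde\psi$ carry constants and $\e_0$'s depending on the kernel, hence on $h$, and the Taylor remainder controlling the accuracy of the quotient is $\ud_1^2R$ in the direction $\tilde\psi$ --- again a differential in a non-tangential direction that no test of $R$ controls. (Contrast with the simplified negligibility test, where the analogous remainder is a pure $x$-derivative of a genuine test.) The paper uses no difference quotients.

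The genuine gap is that you never explain how testing $R$ against an (LSK7)-kernel ``reproduces the bad configuration.'' The hypotheses on $R$ only bound pure $x$-derivatives $\pd_x^\gamma\bigl(R(\tilde\chi_{\e,x},x)\bigr)$ of genuine smoothing kernels $\tilde\chi$, whereas $\pd_x^\alpha\bigl[\ud_1R(\tilde\phi_{\e,x},x)(\tilde\psi_{\e,x})\bigr]$ is a mixed derivative of order $\abso{\alpha}+1$ whose chain-rule expansion is not that of any single pure test. Closing this requires two further ideas that the proposal does not contain: (i) the Mazur--Orlicz polarization formula, which writes the mixed derivative as a linear combination of pure powers $(\D_Z^x+c(\pd_t-\kappa\D_X^x))^{\abso{\alpha}+1}$ of a single first-order operator; and (ii) the matching conditions \eqref{fluesse1}--\eqref{fluesse2}, verified by an explicit computation, which show that these pure powers applied to $R(\mu_{-t}^*\tilde\phi_{\e,x},\mu_tx)$ agree at the sampled points $(\e_j,x_j)$ with the same powers applied to a pure test of $R$ built from an (LSK7)-kernel. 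Moreover, since the rescaling $\e_j(y-x)/\e+x_j$ in (LSK7) means that $x$-differentiation of that kernel naturally produces $\pd_{x+y}$-derivatives of the prescribed data, the comparison object is $R$ tested along the flow of a coordinate field $e_i$ rather than a plain test; this is what forces the paper's two-pass induction beginning with $X=e_i$, a structure entirely absent from your plan. As written, the decisive step is asserted rather than proved.
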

\begin{proof}
If $X=e_i$ for some $i \in \{1,\dotsc,n\}$ set $\kappa\coleq 0$, otherwise assume the result holds for $X=e_i$ for some $i$ and set $\kappa\coleq 1$. This means that this proof has to be read twice --- both cases follow the same scheme, but the second requires the first as a prerequisite. Let $\mu\colon (t,x) \mapsto \mu_t x$ be the flow of $X$. The claim follows from estimates of $\pd_x^\alpha(\pd_t - \kappa \D_X^x)(R(\mu_{-t}^*\tilde \phi_{\e,x}, \mu_t x))|_{t=0}$, which by the Mazur-Orlicz polarization formula (\cite{Mazur}) $a_1 \dotsm a_k = \frac{1}{k!} \sum_{j=1}^k (-1)^{k-j} \sum_{i_1 < \dotsc < i_j} (a_{i_1} + \dotsc + a_{i_j})^k$ (for any $a_1\dotsc a_k$ in a commutative ring) is given by a linear combination of terms $f(t,\e,x) \coleq (\D_Z^x + c(\pd_t - \kappa \D_X^x))^{\abso{\alpha}+1} (R(\mu_{-t}^* \tilde \phi_{\e,x}, \mu_t x))$ 
at $t=0$ with $Z \in \bN_0^n$, $Z \le \alpha$, $c \in \{0,1\}$, $(Z,c) \ne (0,0)$, 
for which is hence suffices to verify the growth conditions. Assuming the contrary, $\exists K, \alpha$ (a) $\forall N, q$ (b) $\exists m_0$ $\forall q$; $\exists \tilde \phi \in \hlsk q \Omega$ $\exists (\e_j)_j \searrow 0$, $\e_j < 1/j$, $\exists (x_j)_j \in K^\bN$: $\abso{f(0,\e_j, x_j)} > j \cdot \e_j^{-N}$ or $> j \cdot \e_j^m$, respectively, $\forall j$. By assumption on $R$ one knows that (a) $\exists q_0, N_0$ (b) $\exists q_0$; $\forall \tilde \psi \in \hlsk{q_0}{\Omega}$: $\sup_{x \in K}\abso{(\D_Z^x + c(\pd_{x_i} - \kappa \pd_t))^{\abso{\alpha}+1} (R(\beta_{-t}^* \tilde \psi_{\e,x}, \beta_t x))} = O(\e^{-N_0})$ or $O(\e^m)$, respectively, where $\beta$ is the flow of $\kappa e_i$. Set $N=N_0$, $q = q_0$ above. Using the chain rule (\cite{hardycomb}), $f(t,\e,x)$ is given by
\begin{multline*}
 \sum_{\substack{\pi_1,\pi_2\\k_1+k_2 = \abso{\alpha}+1}} \binom{\abso{\alpha}+1}{k_1} (\ud_1^{\abso{\pi_1}}\ud_2^{\abso{\pi_2}} R)(\mu^*_{-t}\tilde \phi_{\e,x}, \mu_t x) \cdot \\
 \prod_{B_1 \in \pi_1} (\D_Z^x + c (\pd_t - \kappa \D_X^x))^{\abso{B_1}} ( \mu_{-t}^* \tilde \phi_{\e,x})
\cdot
 \prod_{B_2 \in \pi_2} (\D_Z^x + c (\pd_t - \kappa \D_X^x))^{\abso{B_2}} ( \mu_t x),
\end{multline*}
where $\pi_j$ runs through all partitions of $\{1,\dotsc,k_j\}$, $\abso{\pi_j}$ is the number of blocks in $\pi_j$, and the products run through all blocks of the respective partition. Applying the chain rule in the same way to $(\D_Z^x + c(\pd_{x_i} - \kappa \pd_t))^{\abso{\alpha}+1}(R(\beta_{-t}^* \tilde \psi_{\e,x}, \beta_t x))$, one sees that this expression is equal to $f(t,\e,x)$ if $\forall k=0,\dotsc,\abso{\alpha}+1$
\begin{align}
(\D_Z^x + c (\pd_t - \kappa \D_X^x))^k(\mu_{-t}^* \tilde \phi_{\e,x}) &= (\D_Z^x + c(\pd_{x_i} + \kappa \pd_{y_i}))^k \tilde \psi_{\e,x} \label{fluesse1} \\
(\D_Z^x + c(\pd_t - \kappa \D_X^x))^k(\mu_t x) & = (\D_Z^x + c(\pd_{x_i} - \kappa\pd_t))^k\beta_t x \label{fluesse2}
\end{align}
With
$\tilde \phi_\beta = \pd_{x+y}^{\beta_i} (((Z^i + 1 - c)\pd_{x_i+y_i} - \kappa c (\D_X^x + \Lie_X^y))/(Z^i + 1))^{\beta_i} \tilde \phi$ for $\abso{\beta} \le \abso{\alpha}+1$ define $\tilde \psi$ as in (LSK7).
A short calculation shows that $(\pd_x^{\gamma - \gamma_i e_i} (Z^i \pd_{x_i} + c(\pd_{x_i} + \kappa \pd_{y_i}))^{\gamma_i} \tilde \psi)_{\e_j, x_j} = (\pd_x^{\gamma - \gamma_i e_i} (Z^i \pd_{x_i} - c (\Lie_X^y + \kappa \D_X^x))\tilde \phi_{\e_j,x_j}$ and thus \eqref{fluesse1} holds at $(\e, x) = (\e_j, x_j)$ $\forall j$ for $\abso{\gamma} \le k$. Equation \eqref{fluesse2} holds trivially at $(t,x)=(0, x_0)$ if $\kappa c X(x_0)=0$. Otherwise, by the rectification theorem there is a local diffeomorphism $\rho$ and a vector $v \in \bR^n$ such that $\D\rho(x)X(x) = v \in \bR^n$ and $\mu(t,x) = \rho^{-1}(\rho(x) + t v)$ for $(t,x)$ in a neighborhood of $(0, x_0)$, which implies $((\D_X^x)^k\pd_t^l \mu)(t,x) = d^{k+l}(\rho^{-1})(\rho(x) + t v)\cdot v^{k+l}$ and thus $(\pd_t - \D_X^x)^k \mu_t x = 0 = (\pd_{x_i} - \pd_t)^k \beta_t x$. In sum this gives a contradiction to our assumption.
\end{proof}

\section{Association}

No discussion of Colombeau algebras would be complete without mention of the concept of association, which provides a means to interpret nonlinear generalized functions in the context of linear distribution theory. We give some elementary results here which are typical for all Colombeau algebras and easily obtained by help of Proposition \ref{additional}.

\begin{definition}\label{assocdef}$R, S \in \hEm \Omega$ are called associated with each other, written $R \approx S$, if $\forall \psi \in \ccD(\Omega)$ $\exists q \in \bN$ $\forall \tilde \phi \in \hlsk q\Omega$: $(R-S)(\tilde \phi_{\e,x}, x)$ converges, as a function in $x$, to $0$ in $\ccD'(\Omega)$ for $\e \to 0$.
\end{definition}

Because a negligible function evidently is associated with zero this definition is independent of the representatives and we may talk of association of elements of $\hG \Omega$. The following classical results are immediate consequences of (LSK1) and (LSK5):

\begin{proposition}
 \begin{enumerate}[label=(\roman*)]
  \item For $f \in C^\infty(\Omega)$ and $u \in \ccD'(\Omega)$, $\iota(f)\iota(u) \approx \iota(f u)$.
  \item For $f,g \in C(\Omega)$, $\iota(f) \iota(g) \approx \iota(fg)$.
  \end{enumerate}
\end{proposition}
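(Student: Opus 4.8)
The plan is to unwind the definition of association in each case and reduce everything to the behaviour of the mollified factors $\langle f, \tilde\phi_{\e,x}\rangle$ and $\langle u, \tilde\phi_{\e,x}\rangle$ (resp.\ $\langle g, \tilde\phi_{\e,x}\rangle$), tested against a fixed $\psi \in \ccD(\Omega)$. So I would fix $\psi$ and a compact set $L$ with $\supp \psi \csub L \csub \Omega$. By (LSK1), for small $\e$ and $x \in \supp \psi$ the support of $\tilde\phi_{\e,x}$ lies in $\ball{C\e}x \subseteq L$; by (LSK2) (with $\alpha=\beta=0$) one gets $\norm{\tilde\phi_{\e,x}}_{L^1} = O(1)$ and, for general $\beta$, $\sup_y \abso{\pd_y^\beta \tilde\phi_{\e,x}(y)} = O(\e^{-n-\abso\beta})$. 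These, together with (LSK3) and (LSK5), are the only kernel properties I will use.

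For (ii) I would first record that for continuous $h$ one has $\langle h, \tilde\phi_{\e,x}\rangle \to h(x)$ uniformly for $x$ in compact sets: writing $\langle h, \tilde\phi_{\e,x}\rangle - h(x) = \int (h(y)-h(x))\tilde\phi_{\e,x}(y)\,\ud y + h(x)(\int \tilde\phi_{\e,x}(y)\,\ud y - 1)$, the second summand is $O(\e^{q+1})$ by (LSK3) applied to the constant function $1$, while the first is bounded by $\sup_{y \in \ball{C\e}x}\abso{h(y)-h(x)}\cdot \norm{\tilde\phi_{\e,x}}_{L^1}$, which tends to $0$ by uniform continuity of $h$ on $L$ and the $L^1$-bound. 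Applying this to $f$, $g$ and $fg$ (any order $q$, e.g.\ $q=0$, suffices) shows that $\iota(f)\iota(g)$ and $\iota(fg)$, evaluated at $(\tilde\phi_{\e,x},x)$, both converge to $f(x)g(x)$ uniformly on compacta, so their difference converges to $0$ in $\ccD'(\Omega)$.

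For (i) this pointwise strategy breaks down, and that is the main obstacle: the mollified distribution $\langle u, \tilde\phi_{\e,x}\rangle$ need not stay bounded in $x$, but only grows like $O(\e^{-n-m})$, where $m$ is the order of $u$ on $L$ (by the seminorm estimate for distributions together with (LSK2)). To absorb this growth I would exploit that here $f$ is \emph{smooth}: by (LSK3) with $\alpha=0$, $\langle f, \tilde\phi_{\e,x}\rangle = f(x) + O(\e^{q+1})$ uniformly on $\supp \psi$. Choosing the order $q$ (which in the definition of association may legitimately depend on $\psi$) so large that $q+1 > n+m$, the product of this $O(\e^{q+1})$ error with $\langle u, \tilde\phi_{\e,x}\rangle$ is $O(\e)$, whence $(\iota(f)\iota(u))(\tilde\phi_{\e,x},x) = f(x)\langle u, \tilde\phi_{\e,x}\rangle + o(1)$ uniformly on $\supp\psi$.

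It then remains to pass to the limit in the main term. Since $f\psi \in \ccD(\Omega)$, (LSK5) with $k=0$ gives $\int \langle u, \tilde\phi_{\e,x}\rangle f(x)\psi(x)\,\ud x = \langle \langle u, \tilde\phi_{\e,\cdot}\rangle, f\psi\rangle \to \langle u, f\psi\rangle = \langle fu, \psi\rangle$, while the $o(1)$ remainder integrates against $\psi$ to $0$; on the other hand (LSK5) applied to the distribution $fu$ yields $\int (\iota(fu))(\tilde\phi_{\e,x},x)\psi(x)\,\ud x \to \langle fu, \psi\rangle$ as well. Subtracting, $\langle (\iota(f)\iota(u) - \iota(fu))(\tilde\phi_{\e,\cdot},\cdot), \psi\rangle \to 0$, which is precisely the required association. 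The only genuinely delicate point is this coupling between the chosen order $q$ and the local order $m$ of $u$; the rest is the standard approximate-identity estimate.
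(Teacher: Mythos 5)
Your argument is correct. For part (ii) it is essentially the paper's proof: the paper bounds $\int_{\ball{C\e}{x}} f(y)(g(x)-g(y))\tilde\phi_{\e,x}(y)\,\ud y$ by $\sup\abso{f(y)(g(x)-g(y))}\cdot O(\e^n)\cdot O(\e^{-n})$, which is exactly your $L^1$-bound on $\tilde\phi_{\e,x}$ combined with uniform continuity, and then splits the product difference into the same kind of terms you do. For part (i) you take a genuinely different (and more explicit) route. The paper's one-line proof only treats $\langle f(x)\langle u,\tilde\phi_{\e,x}\rangle - \langle fu,\tilde\phi_{\e,x}\rangle,\psi(x)\rangle$, i.e.\ it silently replaces $\iota(f)$ by $\sigma(f)$; the missing step is supplied by $(\iota-\sigma)(f)\in\hN\Omega$ together with the ideal property of $\hN\Omega$ in $\hEm\Omega$, so that $\iota(f)\iota(u)-\sigma(f)\iota(u)$ is negligible and hence associated with zero, after which (LSK5) applied to $u$ and to $fu$ finishes the proof. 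You instead keep $\iota(f)$ and control $\langle f,\tilde\phi_{\e,x}\rangle - f(x) = O(\e^{q+1})$ via (LSK3), absorbing the $O(\e^{-n-m})$ growth of the mollified distribution by choosing $q$ large depending on the local order $m$ of $u$ near $\supp\psi$ (legitimate, since the definition of association quantifies $\exists q$ after $\forall\psi$, and kernels of order $q$ are in particular of order $0$, so (LSK5) still applies). What each buys: the ideal-property route avoids any coupling between $q$ and the order of $u$, since negligibility beats every power of $\e$; your route is self-contained, does not invoke the embedding theorem or the ideal property, and makes explicit a quantitative point the paper glosses over. Both are valid.
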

\begin{proof}
(i) $\langle f(x) \langle u, \tilde \phi_{\e,x} \rangle - \langle fu, \tilde \phi_{\e,x} \rangle , \psi(x) \rangle \to 0$ for all $\tilde \phi \in \hlsk 0 \Omega$ by (LSK5).
(ii) For $f,g \in C(\Omega)$ and $\tilde \phi \in \hlsk 0 \Omega$, with $C$ from (LSK1) we can for small $\e$ estimate the modulus of $\int_{\ball{C\e}{x}} f(y) (g(x) - g(y)) \tilde \phi_{\e,x}(y)\,\ud y$ uniformly for $x$ in compact sets by
\begin{equation}\label{gehtgegennull}
\sup_{y \in \ball{C\e}{x}} \abso{f(y)(g(x) - g(y))} \cdot C_\e \cdot \sup_{y \in \Omega}\abso{\tilde \phi_{\e,x}(y)} \to 0
\end{equation}
 where $C_\e = O(\e^n)$ is the volume of $\ball{C\e}{x}$. In particular this holds for $f=1$, so uniformly on compact sets we have $\langle g, \tilde \phi_{\e,x} \rangle - g(x) \to 0$ and boundedness of $\langle g, \tilde \phi_{\e,x}\rangle$. It follows that for $f,g \in C(\Omega)$, $\langle f, \tilde \phi_{\e,x} \rangle \cdot \langle g, \tilde \phi_{\e,x} \rangle - \langle fg, \tilde \phi_{\e,x} \rangle$, which equals $\langle f, \tilde \phi_{\e,x} \rangle ( \langle g, \tilde \phi_{\e,x} \rangle - g(x) ) + \langle f(y)(g(x) - g(y)), \tilde \phi_{\e,x}(y) \rangle$, converges to zero uniformly for $x$ in compact sets and thus weakly in $\ccD'(\Omega)$.
\end{proof}

Being associated is a local property:

\begin{lemma}\label{assoclocal}Given $R,S \in \hEm \Omega$, if $R$ and $S$ are associated with each other then their restrictions to every open subset of $\Omega$ are so. Conversely, if their restrictions to all elements of an open cover of $\Omega$ are associated with each other, then so are $R$ and $S$.
\end{lemma}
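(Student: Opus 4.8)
The plan is to reduce both implications to the same mechanism already used for locality of moderateness and negligibility (Proposition \ref{localize}) and for the sheaf property (Proposition \ref{itsasheaf}): (LSK4) lets one replace a smoothing kernel on one domain by a kernel on another that agrees with it on a prescribed compact set for small $\e$, while (LSK1) guarantees that, on that compact set, the supports of the kernel stay inside the smaller domain. Throughout I read Definition \ref{assocdef} as: for each fixed $\psi$ the order $q$ may be chosen, and the assertion is then that $\int_\Omega (R-S)(\tilde\phi_{\e,x},x)\,\psi(x)\,\ud x \to 0$. I will also use the elementary monotonicity $\hlsk q \Omega \subseteq \hlsk {q'} \Omega$ for $q \ge q'$, which holds because $O(\e^{q+1}) = O(\e^{q'+1})$ makes (LSK3) for order $q$ imply (LSK3) for order $q'$.

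For the restriction statement, fix an open $\Omega' \subseteq \Omega$ and a test function $\psi \in \ccD(\Omega') \subseteq \ccD(\Omega)$, and set $K \coleq \supp \psi \csub \Omega'$. Association of $R$ and $S$ on $\Omega$ yields an order $q$ for this $\psi$. Given any $\tilde\phi \in \hlsk q {\Omega'}$, I would apply (LSK4) with the pair $(\Omega', \Omega)$ and the compact set $K$ to obtain $\tilde\psi \in \hlsk q \Omega$ with $\tilde\psi_{\e,x} = \tilde\phi_{\e,x}$ for $x \in K$ and $\e$ small. By the definition of restriction, $(R|_{\Omega'}-S|_{\Omega'})(\tilde\phi_{\e,x},x) = (R-S)(\tilde\phi_{\e,x},x) = (R-S)(\tilde\psi_{\e,x},x)$ for such $x$ and $\e$; since $\psi$ is supported in $K$, testing against $\psi$ and invoking association on $\Omega$ gives the claim with the same order $q$.

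For the converse (gluing) statement, let $(U_\alpha)_\alpha$ be an open cover of $\Omega$ with $R|_{U_\alpha}\approx S|_{U_\alpha}$ for all $\alpha$, and fix $\psi \in \ccD(\Omega)$. As $\supp\psi$ is compact it is covered by finitely many $U_{\alpha_1},\dotsc,U_{\alpha_k}$; choose a subordinate partition of unity $\chi_1,\dotsc,\chi_k$ so that $\psi = \sum_{i} \chi_i\psi$ with $\psi_i \coleq \chi_i\psi \in \ccD(U_{\alpha_i})$. Association of $R|_{U_{\alpha_i}}$ and $S|_{U_{\alpha_i}}$ applied to $\psi_i$ produces an order $q_i$; set $q \coleq \max_i q_i$. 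Given $\tilde\phi \in \hlsk q \Omega$, for each $i$ I would apply (LSK4) with the pair $(\Omega, U_{\alpha_i})$ and the compact set $\supp\psi_i$ to get $\tilde\phi^{(i)} \in \hlsk q {U_{\alpha_i}}$ agreeing with $\tilde\phi$ there for small $\e$; by (LSK1) the support of $\tilde\phi_{\e,x}$ lies in $U_{\alpha_i}$ whenever $x \in \supp\psi_i$ and $\e$ is small, so $(R-S)(\tilde\phi_{\e,x},x) = (R|_{U_{\alpha_i}}-S|_{U_{\alpha_i}})(\tilde\phi^{(i)}_{\e,x},x)$ on $\supp\psi_i$. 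Splitting $\langle (R-S)(\tilde\phi_{\e,\cdot},\cdot),\psi\rangle = \sum_i \langle (R-S)(\tilde\phi_{\e,\cdot},\cdot),\psi_i\rangle$ and using $\hlsk q {U_{\alpha_i}} \subseteq \hlsk {q_i}{U_{\alpha_i}}$ together with association of the restrictions, each of the finitely many summands tends to $0$.

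I expect no genuine analytic difficulty; the entire content is quantifier bookkeeping. The only points requiring care are that the order $q$ in Definition \ref{assocdef} is permitted to depend on the test function, so in the gluing direction one must localize $\psi$ by a \emph{finite} partition of unity before taking the maximum of the resulting finitely many orders, and that the kernel transfer via (LSK4) matches supports only for $x$ in the relevant compact set and small $\e$ --- which is exactly enough, since $\psi_i$ (respectively $\psi$) is supported there. This runs in parallel to the arguments in Propositions \ref{localize} and \ref{itsasheaf}.
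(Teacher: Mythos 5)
Your proof is correct and follows essentially the same route as the paper: (LSK4) to transfer kernels between $\Omega$ and the subsets in the restriction direction, and a finite subordinate partition of unity of $\supp\psi$, the maximum of the finitely many orders $q_i$, (LSK1) for support containment, and the monotonicity $\hlsk q U \subseteq \hlsk {q_i} U$ in the gluing direction. No gaps.
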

\begin{proof}
The first part is clear using (LSK4): for $U \subseteq \Omega$ open and $\psi \in \ccD(U)$, Definition \ref{assocdef} gives some $q$ such that for $\tilde \phi \in \hlsk q \Omega$, $\langle (R-S)(\tilde \phi_{\e,x}, x), \psi(x)\rangle \to 0$; for any $\tilde \psi \in \hlsk q U$ then there exists $\tilde \phi \in \hlsk q \Omega$ such that $\tilde \psi_{\e,x} = \tilde \phi_{\e,x}$ for $x \in \supp \psi$ and small $\e$, thus $\langle (R|_U - S|_U)(\tilde \psi_{\e,x}, x), \psi(x) \rangle = \langle (R-S)(\tilde \phi_{\e,x}, x), \psi(x) \rangle \to 0$.

For the second part, let $\psi \in \ccD(\Omega)$ and an open cover $(U_\alpha)_\alpha$ of $\Omega$ be given. Choose a subordinate partition of unity $(\chi_j)_j$. With $\psi_j \coleq \chi_j \cdot \psi$ we then can write $\psi = \sum \psi_j$ for finitely many $j$ which we enumerate as $1,2,\dotsc,m$ for some $m \in \bN$; furthermore, $\supp \psi_j \subseteq U_{\alpha(j)}$ for some $\alpha(j)$.

For each $j=1\dotsc m$ by assumption there exists $q_j$ such that for all $\tilde \phi_j \in \hlsk{q_j}{U_j}$, $\langle (R-S)|_{U_j}((\tilde \phi_j)_{\e,x}, x), \psi_j(x) \rangle \to 0$. With $q = \max q_j$ and $\tilde \phi \in \hlsk q \Omega$, $\langle (R-S)(\tilde \phi_{\e,x}, x), \psi(x) \rangle = \sum_{j=1}^m \langle (R - S)(\tilde \phi_{\e,x}, x), \psi_j(x) \rangle$ equals (using (LSK1)) $\sum_{j=1}^m \langle (R-S)|_{U_i}(\tilde \phi_{\e,x}, x), \psi_j(x) \rangle$.

 For each $j$ we can by (LSK4) replace $\tilde \phi$ by $\tilde \phi_j \in \hlsk q {U_j} \subseteq \hlsk {q_j} {U_j}$ such that $\tilde \phi_{\e,x} = (\tilde \phi_j)_{\e, x}$ for all $x \in \supp \psi_j$ and $\e \le \e_0$, from which the claim follows.
\end{proof}

%

\section{Smoothing kernels}\label{sec_testobj}

We use the following Lemma (\cite[Lemma 10.1]{found}).

\begin{lemma}\label{lambdalemma} Let $1 > \e_1 > \e_2 > \dotsc \to 0$, $\e_0=2$. Then there exist $\lambda_j \in \ccD(\bR)$ ($j=1,2,\dotsc$) having the following properties: 1) $\supp \lambda_j = [\e_{j+1}, \e_{j-1}]$, 2) $\lambda_j(x) > 0$ for $x \in (\e_{j+1}, \e_{j-1})$, 3) $\sum_{j=1}^\infty \lambda_j(x) = 1$ for $x \in I$, 4) $\lambda_j(\e_j) = 1$ and 5) $\lambda_1(x) = 1$\quad for $x \in [\e_1, 1]$. 
\end{lemma}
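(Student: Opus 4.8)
The plan is to build the $\lambda_j$ by an explicit, essentially one-dimensional construction, exploiting the fact that the cover $\{(\e_{j+1},\e_{j-1})\}_{j\ge 1}$ of $(0,\e_0)$ is locally finite and that only \emph{consecutive} intervals overlap: on each overlap $[\e_{j+1},\e_j]$ at most the two functions $\lambda_j$ and $\lambda_{j+1}$ may be nonzero. Hence property 3 forces $\lambda_j+\lambda_{j+1}=1$ there, and a partition of unity with the prescribed supports is completely determined by a single transition function per overlap interval.

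Concretely, I would first fix a model function $\tau\in\Cinf(\bR)$ with $\tau\equiv 0$ on $(-\infty,0]$, $\tau\equiv 1$ on $[1,\infty)$, with $0<\tau<1$ and $\tau$ strictly increasing on $(0,1)$, and with all derivatives of $\tau$ vanishing at $0$ and at $1$ (the standard $e^{-1/x}$-type construction). For $j\ge 1$ I set $t_j(x)\coleq \tau\bigl((x-\e_{j+1})/(\e_j-\e_{j+1})\bigr)$, a smooth transition from $0$ at $\e_{j+1}$ to $1$ at $\e_j$ with flat ends, and in addition $t_0(x)\coleq\tau(x-1)$, which transitions from $0$ on $(-\infty,1]$ to $1$ at $x=2=\e_0$. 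Then I define $\lambda_j$ piecewise by $\lambda_j=t_j$ on $[\e_{j+1},\e_j]$, $\lambda_j=1-t_{j-1}$ on $[\e_j,\e_{j-1}]$, and $\lambda_j=0$ elsewhere; for $j=1$ the right-hand piece is $1-t_0$ on $[\e_1,2]$, which equals $1$ on $[\e_1,1]$ and tapers to $0$ at $\e_0=2$.

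The verification then splits into routine checks. Smoothness and $\ccD(\bR)$ membership follow because every junction value is glued with vanishing derivatives of all orders (the flat ends of $\tau$): at the outer endpoints $\e_{j+1},\e_{j-1}$ where $\lambda_j$ meets $0$, and at the interior peak $\e_j$ where the two pieces meet with common value $1$ and all derivatives $0$. Property 2, and hence the exact supports of property 1, follows from $0<\tau<1$ on $(0,1)$, which gives $\lambda_j>0$ precisely on $(\e_{j+1},\e_{j-1})$. For property 3 I would note that on each $[\e_{j+1},\e_j]\subseteq I$ one has $\lambda_j+\lambda_{j+1}=t_j+(1-t_j)=1$ with no other $\lambda_k$ contributing (the would-be neighbours vanish at these boundary points of their supports), while on $[\e_1,1]$ only $\lambda_1=1$ survives; since these intervals exhaust $(0,1]$, the sum equals $1$ throughout $I$. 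Property 4 is then immediate, $\lambda_j(\e_j)=\tau(1)=1$, and property 5 holds by the special choice of $t_0$.

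The only genuinely delicate point — and the one I would treat with care — is the asymmetric handling of the top index $j=1$: property 1 demands $\supp\lambda_1=[\e_2,\e_0]=[\e_2,2]$, whereas property 5 demands $\lambda_1\equiv1$ on the strictly smaller interval $[\e_1,1]$. A naive normalization $\lambda_j=\theta_j/\sum_k\theta_k$ would force $\lambda_1\equiv1$ on the whole region where only $\theta_1$ is nonzero, in conflict with the required taper on $(1,2)$; this is exactly why I introduce the dedicated transition $t_0$ supported in $[1,2]$ instead of distributing the cutoff symmetrically. Everything else is a mechanical consequence of the pairwise-overlap structure together with the flat-endpoint model function.
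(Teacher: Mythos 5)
Your construction is correct, and all five properties check out: the flat-ended transition function $\tau$ makes each $\lambda_j$ smooth at the three junction points $\e_{j+1},\e_j,\e_{j-1}$; strict positivity of $\tau$ on $(0,1)$ gives exactly the supports and positivity required in 1) and 2); the telescoping identity $t_j+(1-t_j)=1$ on each $[\e_{j+1},\e_j]$, together with the vanishing of the next-but-one neighbours at the interval endpoints, gives 3); and $\tau(1)=1$ gives 4). Your handling of the top index via the separate transition $t_0$ supported in $[1,2]$ is exactly what is needed to reconcile $\supp\lambda_1=[\e_2,2]$ with $\lambda_1\equiv 1$ on $[\e_1,1]$, and you are right that a naive normalization $\theta_j/\sum_k\theta_k$ would not produce the taper on $(1,2)$. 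There is no comparison to make with the paper: the lemma is quoted from \cite[Lemma 10.1]{found} without proof, so your argument serves as a self-contained replacement; it is the natural construction for a one-dimensional cover in which only consecutive intervals overlap, and is essentially the standard one.
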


\begin{proposition}\label{notempty}
 $\hlsk q \Omega$ is not empty.
\end{proposition}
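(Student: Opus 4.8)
The plan is to exhibit one concrete smoothing kernel of order $q$, built from a fixed mollifier with vanishing moments and rescaled by a position-dependent factor that keeps every support inside $\Omega$; this single example suffices since $\hlsk q\Omega$ is an affine space.

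First I fix $\rho \in \ccD(\ball 1 0)$ with $\int_{\bR^n}\rho(y)\,\ud y = 1$ and $\int_{\bR^n} y^\beta\rho(y)\,\ud y = 0$ for all $1 \le \abso\beta \le q$. Such $\rho$ exists by the classical moment argument: make the ansatz $\rho = P\cdot\chi$ with $\chi \in \ccD(\ball 1 0)$ nonnegative and not identically zero and $P$ a polynomial of degree $\le q$, and solve the resulting linear system for the coefficients of $P$, whose matrix $M_{\beta\gamma}=\int y^{\beta+\gamma}\chi(y)\,\ud y$ is the positive definite moment matrix of $\chi$ and hence invertible. Next, since $x \mapsto \mathrm{dist}(x,\pd\Omega)$ is positive and continuous on $\Omega$ (read as $+\infty$ when $\Omega=\bR^n$), I choose $c \in \Cinf(\Omega,(0,\infty))$ with $\overline{\ball{c(x)}x}\subseteq\Omega$ for every $x$; a smooth function dominated by $\tfrac12\min(1,\mathrm{dist}(\cdot,\pd\Omega))$ works. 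I then set
\[
 \tilde\phi_{\e,x}(y) \coleq (\e\,c(x))^{-n}\,\rho\!\left(\frac{y-x}{\e\,c(x)}\right).
\]
Because $\supp\rho\subseteq\overline{\ball 1 0}$ and $\e \le 1$, the support of $\tilde\phi_{\e,x}$ lies in $\overline{\ball{\e c(x)}x}\subseteq\Omega$, so $\tilde\phi_{\e,x}\in\ccD(\Omega)$ for all $(\e,x)\in I\times\Omega$; smoothness of $\tilde\phi$ as a map into $\ccD(\Omega)$ follows because the expression is jointly smooth in $(\e,x,y)$ and the supports are, locally in $(\e,x)$, contained in one fixed compact subset of $\Omega$.

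I then verify the three axioms. (LSK1) is immediate: on $K\csub\Omega$ the function $c$ is bounded, so $C \coleq \sup_K c$ gives $\supp\tilde\phi_{\e,x}\subseteq\ball{C\e}x$ for all $\e\le 1$. For (LSK2) the decisive observation is that applying $\pd_{x+y}=\pd_x+\pd_y$ to $\rho((y-x)/(\e c(x)))$ makes the naive $O(\e^{-1})$ contributions from the translation cancel, leaving only terms carrying a factor $\pd_i c(x)/c(x)$ multiplied by a bounded component of the $\rho$-argument; since moreover $x$-derivatives of the scale $\e^{-n}c(x)^{-n}$ produce only $O(1)$ factors, each $\pd_{x+y}$ is harmless ($O(1)$), whereas each $\pd_y$ yields one factor $\e^{-1}$, so that $(\pd_{x+y}^\alpha\pd_y^\beta\tilde\phi)_{\e,x}(y)=O(\e^{-n-\abso\beta})$ uniformly on compacta. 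For (LSK3) with $\alpha=0$ I substitute $y = x + \e c(x)u$ to get $\int f(y)\tilde\phi_{\e,x}(y)\,\ud y = \int f(x+\e c(x)u)\rho(u)\,\ud u$, Taylor-expand $f$ to order $q$, and use the moment conditions to annihilate all intermediate terms, leaving $f(x)$ plus a remainder carrying an explicit factor $(\e c(x))^{q+1}$.

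The one step needing genuine care, and the main obstacle, is (LSK3) for $\alpha\ne 0$. Here I move $\pd_x^\alpha$ inside the integral (legitimate since $y$ is the integration variable), reducing the claim to $\pd_x^\alpha[f(x)+(\text{remainder})] = \pd^\alpha f(x)+O(\e^{q+1})$, and I must check that differentiating the Taylor remainder preserves the order in $\e$. This holds because the remainder is $(\e c(x))^{q+1}$ times an integral of $(\pd^\gamma f)(x+t\e c(x)u)$ against $(1-t)^q u^\gamma\rho(u)$: differentiating in $x$ under the integral hits either the prefactor $(\e c(x))^{q+1}$ (whose derivatives keep the power $\e^{q+1}$, as $x$-derivatives of $c^{q+1}$ are $O(1)$) or the integrand through the chain rule (whose factors $\delta_{ij}+t\e u_j\pd_i c$ are $O(1)$), so $\pd_x^\alpha$ of the remainder remains $O(\e^{q+1})$ uniformly on compacta. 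Together these verifications show $\tilde\phi\in\hlsk q\Omega$, establishing non-emptiness.
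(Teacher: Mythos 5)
Your proof is correct, but it takes a genuinely different route from the paper's. The paper first treats $\Omega=\bR^n$ with the prototype kernel $\e^{-n}\varphi((y-x)/\e)$ and then handles a general open $\Omega$ by gluing: it cuts the prototype off in $y$ with an exhaustion $(\chi_j)$ and patches across $\e$-scales using the partition of unity $(\lambda_j)$ of Lemma \ref{lambdalemma}, so that on every $K\csub\Omega$ the kernel coincides with the prototype for small $\e$ and all three axioms reduce to the trivial $\bR^n$ computations. You instead build a single closed-form kernel valid on all of $\Omega$ by rescaling with a position-dependent factor $c(x)$ that forces $\supp\tilde\phi_{\e,x}\subseteq\Omega$ for every $(\e,x)\in I\times\Omega$. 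This buys you a cleaner construction with no exhaustion, no $\lambda_j$, and no case split, but it costs you more work in the verifications: the $x$-dependence of $c$ breaks the pure translation-plus-scaling structure, so you must check by hand that $\pd_{x+y}$ acting on the argument $(y-x)/(\e c(x))$ produces only the bounded factor $-u_j\,\pd_i c(x)/c(x)$ (you do this correctly), and for (LSK3) with $\alpha\ne 0$ the paper's integration-by-parts trick (converting $\pd_x^\alpha$ of the kernel into $\pd^\alpha$ on $f$) is no longer available, forcing your explicit control of $x$-derivatives of the Taylor remainder --- which you carry out correctly, noting that each derivative hits either the prefactor $(\e c(x))^{q+1}$ or an $O(1)$ chain-rule factor. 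Both arguments are valid; yours is more self-contained (it even includes the moment-matrix construction of $\rho$, which the paper takes for granted), while the paper's reduction to the prototype keeps every individual estimate essentially one line.
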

\begin{proof}
 In case of $\Omega = \bR^n$ we define the prototypical smoothing kernel $\tilde\phi^\circ \in \Cinf(I \times \bR^n, \ccD(\bR^n))$ by $\tilde \phi^\circ_{\e,x}(y) \coleq \e^{-n} \varphi\left((y-x)/\e\right)$ where $\varphi \in \ccD(\bR^n)$ has integral $1$ and vanishing moments of order up to $q$. We verify the conditions of Definition \ref{skdef}: (LSK1) follows from $\supp \varphi((.-x)/\e) = \e \supp \varphi + x$, (LSK2) is clear. For (LSK3), $\int f(y)(\pd_x^\alpha \tilde \phi^\circ)_{\e,x}(y)\,\ud y = \int (\pd^\alpha f)(y) \e^{-n} \varphi((y-x)/\e)\,\ud y = \int (\pd^\alpha f)(x+\e z)\varphi(z)\,\ud z = f^{(\alpha)}(x) + O(\e^{q+1})$ is then obtained by Taylor expansion of $f$ at the point $x$ because $\varphi$ has vanishing moments up to order $q$. Hence, $\tilde\phi^\circ \in \hlsk q {\bR^n}$.

In the general case of an open subset $\Omega \subseteq \bR^n$ we choose an increasing sequence $(K_j)_{j \in \bN}$ of compact sets $K_1 \csub K_2 \csub \dotsc $ whose union is $\Omega$ 
and functions $\chi_j \in \ccD(\bR^n)$ such that $\chi_j \equiv 1$ on $K_j$ and $\supp \chi_j \subseteq K_{j+1}$. 
Let $1>\e_1>\e_2>\dotsc \to 0$, $\e_0=2$ and choose a partition of unity $(\lambda_j)_{j \in \bN}$ on $I$ as in Lemma \ref{lambdalemma}. Define $\tilde \phi \in \Cinf(I \times \Omega, \ccD(\Omega))$ by $\tilde\phi_{\e,x}(y) \coleq \sum_j \lambda_j(\e) \chi_j(y) \tilde\phi^\circ_{\e,x}(y)$ for $\e \in I$ and $x,y\in\Omega$.
Then $\tilde \phi$ satisfies the conditions of Definition \ref{skdef} because for each $K \csub \Omega$ 
the equality $\tilde \phi_{\e,x} = \tilde \phi^\circ_{\e,x}$ holds for small $\e$ and $x \in K$.
\end{proof}

For the subsequent proofs we recall the multivariate chain rule from \cite{Constantine} in our notation.
\begin{proposition}\label{chainrule}
Let $d,m \in \bN$, $g = (g_1,\dotsc,g_m)\colon U \subseteq \bR^d \to \bR^m$, $f\colon V \subseteq \bR^m \to \bC$ where $U$ and $V$ are open, and $x_0 \in U$ be given with $g(x_0) \in V$. Let $0 \ne \alpha \in \bN_0^n$ be given. Assuming $g \in C^\alpha(U)$ and $f \in C^{\abso{\alpha}}(V)$,
\[ \pd^\alpha (f \circ g)(x) = \sum_{1 \le \abso{\beta} \le \abso{\alpha}} (\pd^\beta f)(g(x)) \sum_{p(\alpha, \beta)} (\alpha!) \prod_{j=1}^{\abso{\alpha}} \frac{(\pd^{l_j}g)^{k_j}(x)}{k_j!(l_j!)^{\abso{k_j}}} \]
for $x \in U$, where $p(\alpha, \beta)$ consists of all $(k_1,\dotsc,k_{\abso{\alpha}}; l_1,\dotsc,l_{\abso{\alpha}}) \in (\bN_0^m)^{\abso{\alpha}} \times (\bN_0^d)^{\abso{\alpha}}$ such that for some $1 \le s \le \abso{\alpha}$, $k_i=0$ and $l_i=0$ for $1 \le i \le \abso{\alpha}-s$; $\abso{k_i} > 0$ for $\abso{\alpha}-s+1\le i\le {\abso{\alpha}}$; and $0 \prec l_{\abso{\alpha}-s+1} \prec \dotsm \prec l_{\abso{\alpha}}$ are such that $\sum_{i=1}^{\abso{\alpha}} k_i=\beta$, $\sum_{i=1}^{\abso{\alpha}} \abso{k_i}l_i = \alpha$. Here $\pd^{l_j} g = (\pd^{l_j} g_1,\dotsc,\pd^{l_j} g_m)$ and $\alpha \prec \beta$ means that either $\abso{\alpha} < \abso{\beta}$ or for some $k < n$, $\alpha_i = \beta_i$ for $i \le k$ and $\alpha_{k+1} < \beta_{k+1}$.
\end{proposition}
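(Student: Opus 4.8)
The plan is to prove the formula by induction on $\abso{\alpha}$, since the right-hand side --- an outer sum over $\beta$ with an inner sum over the index set $p(\alpha,\beta)$ --- is organized precisely for a recursion that raises $\alpha$ by a single $e_k$. For the base case $\abso{\alpha}=1$, say $\alpha=e_k$, the constraint $\sum_i \abso{k_i} l_i = e_k$ together with $\abso{k_i}>0$ forces exactly one nonzero block with $\abso{k_1}=1$ and $l_1=e_k$; hence $p(e_k,\beta)$ is nonempty only for $\beta=e_m$, and the formula collapses to $\pd_k(f\circ g) = \sum_m (\pd_m f)(g(x))\,\pd_k g_m(x)$, the ordinary chain rule applied componentwise.

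For the inductive step I would assume the formula for a fixed $\alpha$ and apply $\pd_k$ to both sides. On the right, $\pd_k$ distributes over each summand in two ways: it acts on the outer factor $(\pd^\beta f)(g(x))$, producing $\sum_m (\pd^{\beta+e_m} f)(g(x))\,\pd_k g_m$ by the chain rule just established, and it acts on the product $\prod_j (\pd^{l_j} g)^{k_j}/(k_j!\,(l_j!)^{\abso{k_j}})$ by the Leibniz rule, where differentiating one factor $\pd^{l_j} g_r$ turns it into $\pd^{l_j+e_k} g_r$. One then reorganizes the resulting terms to match $\sum_\beta (\pd^\beta f)(g(x)) \sum_{p(\alpha+e_k,\beta)} (\alpha+e_k)! \prod_j (\pd^{l_j}g)^{k_j}/(k_j!\,(l_j!)^{\abso{k_j}})$.

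The main obstacle is exactly this combinatorial reconciliation. Incrementing a derivative order $l_j\to l_j+e_k$ on one of the $\abso{k_j}$ indistinguishable copies in a block can create a new singleton block, enlarge an existing one, or disturb the chain $0\prec l_{\dotsc}\prec\dotsm\prec l_{\abso{\alpha}}$, and every such move must be shown to land in $p(\alpha+e_k,\beta)$ with precisely the weight $(\alpha+e_k)!/\prod_j k_j!\,(l_j!)^{\abso{k_j}}$. Verifying that the contributions of the Leibniz rule, together with the term $\pd_k g_m$ coming from the outer derivative (which supplies the new $l=e_k$, $\abso{k}=1$ block), collate without overcounting and reproduce the multinomial weights is the heart of the argument; it rests on the coefficient identity relating $\alpha!/(l_j!)^{\abso{k_j}}$ before and after incrementing one $l_j$, balanced against the $1/k_j!$ that accounts for the repeated copies within a block and against the strict ordering $\prec$ that fixes a canonical representative of each unordered collection.

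An alternative I would consider is a generating-function proof that bypasses the term-by-term induction. Writing the formal Taylor expansions $g(x_0+h)=g(x_0)+\sum_{\abso{l}\ge 1}\frac{(\pd^l g)(x_0)}{l!}h^l$ and $f(y_0+w)=f(y_0)+\sum_{\abso{\beta}\ge1}\frac{(\pd^\beta f)(y_0)}{\beta!}w^\beta$, substituting $w=g(x_0+h)-g(x_0)$, and reading off the coefficient of $h^\alpha/\alpha!$ yields $\pd^\alpha(f\circ g)(x_0)$ directly. Expanding each $w^\beta$ by the multinomial theorem and collecting the monomials $h^\alpha$ then produces the inner sum over $p(\alpha,\beta)$, with the ordering constraint $0\prec l_1\prec\dotsm$ serving merely to enumerate each unordered multiset of derivative orders by a unique ordered representative and the factors $1/k_j!$ compensating for repetitions. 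This route replaces the delicate step-by-step bookkeeping by a single combinatorial identification of coefficients, which I expect to be the cleaner of the two.
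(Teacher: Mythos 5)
The paper contains no proof of this proposition: it is quoted, in adapted notation, from Constantine and Savits \cite{Constantine}, so there is nothing internal to compare your argument against. On its own terms, your outline points in the right direction. The base case is handled correctly, and your second, generating-function route --- composing the Taylor expansions of $f$ and $g$, substituting $w = g(x_0+h)-g(x_0)$, expanding $w^\beta$ multinomially and reading off the coefficient of $h^\alpha/\alpha!$ --- is essentially the derivation in the cited source; the ordering $0 \prec l_{\abso{\alpha}-s+1} \prec \dotsb$ and the factors $1/k_j!$ do exactly what you say, namely enumerate each unordered multiset of pairs $(l,k)$ by a unique ordered representative.

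The gap is that in both routes you explicitly postpone the step that constitutes the entire content of the theorem. In the induction, after applying $\pd_k$ and the Leibniz rule you must show that each element of $p(\alpha+e_k,\beta)$ is generated with total multiplicity $(\alpha_k+1)$ times the old weight, since $(\alpha+e_k)! = (\alpha_k+1)\,\alpha!$; the contributions come from three distinct mechanisms (the outer chain-rule term creating a new block with $l=e_k$ and $\abso{k}=1$, incrementing the multiplicity vector $k_j$ of an existing block that already has $l_j = e_k$, and moving one of the $\abso{k_j}$ copies of a block $(l_j,k_j)$ into a block with derivative order $l_j+e_k$), and verifying that these collate against the changes in $k_j!$ and $(l_j!)^{\abso{k_j}}$ without over- or undercounting is a genuine computation, not bookkeeping that can be waved through. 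In the generating-function route there is an additional analytic point you do not address: $f$ and $g$ are only assumed to be $C^{\abso{\alpha}}$, so formal power series composition does not literally apply; you need either Taylor's theorem with remainder together with an argument that the remainder terms do not contribute to the coefficient of $h^\alpha$, or the observation that both sides of the identity are universal polynomials in the partial derivatives of $f$ and $g$ up to order $\abso{\alpha}$, so that it suffices to verify the formula for polynomial $f$ and $g$. As it stands the proposal is a correct road map rather than a proof.
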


\begin{proposition}\label{skdiffeoinv}Given $\tilde \phi \in \hlsk q {\Omega'}$ and a diffeomorphism $\mu\colon \Omega \to \Omega'$, $\mu^*\tilde \phi \in \hlsk q \Omega$.
\end{proposition}
\begin{proof}
We verify the conditions of Definition \ref{skdef}. Set $\tilde \psi \coleq \mu^*\tilde \phi$. First, (LSK1) follows because $\mu$ is locally Lipschitz continuous. For (LSK2) we have to estimate derivatives of $\tilde \phi_{\e, \mu x}(\mu y) \cdot \abso{\det \D\mu(y)}$. We write $\tilde \phi_\e(x,y) = \tilde \phi_{\e,x}(y)$, justified by the exponential law \cite[3.12]{KM}, and define the bijective map $T(x,y) \coleq (x, y-x)$. Because $\abso{\det \D\mu(y)}$ does not depend on $\e$ and $y$ effectively only ranges over a compact set because of (LSK1), it suffices to estimate derivatives of $\tilde \phi_{\e, \mu x}(\mu y)$; assuming $(\alpha,\beta) \ne (0,0)$ (otherwise the case is trivial) we write $\pd_{x+y}^\alpha\pd_{y}^\beta (\tilde \phi_\e(\mu x, \mu y))$ as
$\pd_{(x,y)}^{(\alpha,\beta)} ( (\tilde \phi_\e \circ T^{-1}) \circ (T \circ (\mu \times \mu) \circ T^{-1}))(T(x,y))$
for $x$ in a compact set $K \csub \Omega$ and $y \in \Omega$.
Note that $\tilde \phi_\e \circ T^{-1}$ is smooth at $T(\mu(x), \mu(y))$ and $T \circ (\mu \times \mu) \circ T^{-1}$ is smooth at $T(x,y)$. By the chain rule that expression is equal to 
\begin{multline}\label{wurst1}
\sum_{1 \le \abso{(\alpha', \beta')} \le \abso{(\alpha,\beta)}} \Biggl( (\pd_{(x,y)}^{(\alpha', \beta')} (\tilde \phi_\e \circ T^{-1}))(T(\mu x, \mu y)) \cdot \\
\sum_{p((\alpha,\beta), (\alpha', \beta'))} (\alpha,\beta)! \prod_{j=1}^{\abso{(\alpha,\beta)}} \frac{(\pd^{l_j}g)^{k_j}(T(x,y))}{(k_j!)(l_j!)^{\abso{k_j}}} \Biggr)
\end{multline}
where $g \coleq T \circ (\mu \times \mu) \circ T^{-1}$
and $p((\alpha,\beta),(\alpha', \beta'))$ consists of tuples $(k_1,\dotsc;l_1,\dotsc)$ satisfying $\sum k_i=(\alpha', \beta')$ and $\sum \abso{k_i}l_i = (\alpha,\beta)$.
Noting that
\[ (\pd_{(x,y)}^{(\alpha', \beta')} ( \tilde \phi_\e \circ T^{-1}))(T(\mu x, \mu y)) = (\pd_{x+y}^{\alpha'}\pd_{y}^{\beta'}\tilde \phi)_{\e, \mu x}(\mu y) \]
we see by (LSK2) that this factor in \eqref{wurst1} is $O(\e^{-n-\abso{\beta'}})$. Because $\abso{\beta'}$ can be as large as $\abso{(\alpha, \beta)}$ this growth has to be compensated for by the remaining factors. 
Now $(\pd^{l_j}g)^{k_j}(T(x,y))$ with $l_j = (l_j^{(1)}, l_j^{(2)})$ and $k_j=(k_j^{(1)}, k_j^{(2)})$ is given by (with $0^0 \coleq 1$) $(\pd^{l_j^{(1)}}\mu)^{k_j^{(1)}}(x) \cdot ((\pd^{l_j^{(1)}}\mu)(y) - (\pd^{l_j^{(1)}}\mu)(x))^{k_j^{(2)}}$ if $l_j^{(2)} = 0$, and $0^{k_j^{(1)}} \cdot ((\pd^{l_j^{(1)} + l_j^{(2)}} \mu)(y))^{k_j^{(2)}}$ if $l_j^{(2)} \ne 0$.

From this, (LSK1) and Lipschitz continuity of derivatives of $\mu$ one gains that $(\pd^{l_j}g)^{k_j}(T(x,y))$ is $O(\e^{\abso{k_j^{(2)}}})$ if $l_j^{(2)} = 0$ and $O(1)$ if $l_j^{(2)} \ne 0$, so the $\prod_j$ in \eqref{wurst1} gives $O(\e^{m})$ with $m = \sum_{j} \abso{k_j^{(2)}}  - \sum_{j: l_j^{(2)} \ne 0} \abso{k_j^{(2)}} \ge \abso{ \beta' } - \abso{\sum_{j} \abso{k_j^{(2)}} \cdot l_j^{(2)}} \ge \abso{\beta'} - \abso{\beta}$
which leaves $O(\e^{-n-\beta})$ for the growth of \eqref{wurst1} as desired.

For (LSK3), the case of $\alpha=0$ is clear by substitution in the integral.
Otherwise, we have by Proposition \ref{chainrule} that $(\pd_x^\alpha (\mu^*\tilde \phi))_{\e,x}(y) = \pd_x^\alpha (\tilde \phi_{\e, \mu x}(\mu y) \cdot \abso{\det \D\mu(y)})$ is given by
\begin{equation*}
\sum_{1 \le \abso{\beta} \le \abso{\alpha}} (\pd_{x}^{\beta} \tilde \phi)_{\e,\mu x}(\mu y) \cdot \abso{\det \D\mu(y)} \sum_{p(\alpha, \beta)} \alpha! \prod_{j=1}^{\abso{\alpha}} \frac{(\pd^{l_j}\mu)^{k_j}(x)}{k_j!(l_j!)^{\abso{k_j}}}
\end{equation*}
where $p(\alpha,\beta) = (k_1,\dotsc,k_{\abso{\alpha}}; l_1,\dotsc,l_{\abso{\alpha}})$.
When integrating the product of this with $f(y)$, substitution gives
\begin{align*}
&\sum_{1 \le \abso{\beta} \le \abso{\alpha}}\int_\Omega  f(y)  (\pd_{x}^{\beta} \tilde \phi)_{\e,\mu x}(\mu y) \abso{\det \D\mu(y)} \,\ud y \cdot \sum_{p(\alpha, \beta)} \alpha! \prod_{j=1}^{\abso{\alpha}} \frac{(\pd^{l_j}\mu)^{k_j}(x)}{k_j!(l_j!)^{\abso{k_j}}}\\
= &\sum_{1 \le \abso{\beta} \le \abso{\alpha}} \underbrace{\int (f \circ \mu^{-1})(y)  (\pd_{x}^{\beta} \tilde \phi)_{\e,\mu x}(y) \,\ud y}_{=\pd_{x}^{\beta} (f \circ \mu^{-1})(\mu(x)) + O(\e^{q+1})} \cdot \sum_{p(\alpha, \alpha')} \alpha! \prod_{j=1}^{\abso{\alpha}} \frac{(\pd^{l_j}\mu)^{k_j}(x)}{k_j!(l_j!)^{\abso{k_j}}} \\
= &\ ((f \circ \mu^{-1}) \circ \mu)^{(\alpha)}(x) + O(\e^{q+1}) = (\pd^\alpha f)(x) + O(\e^{q+1})
\end{align*}
uniformly for $x$ in compact sets, which is the desired result.
\end{proof}

We will now show (LSK1-D) for the smoothing kernels of Definition \ref{skdef} and thus establish Proposition \ref{additional}.

%

\begin{proof}[Proof of Proposition \ref{additional}]
(LSK4): Let $U,V$ be open subsets of $\Omega$, $K \csub U \cap V$, $q \in \bN_0$ and $\tilde\phi \in \hlsk q U$. Choose $\chi \in \ccD(U \cap V)$ with $\chi \equiv 1$ on $K$. Let $\e_0 \in I$ be such that $\supp \tilde\psi_{\e,x} \subseteq U \cap V$ for $x \in \supp \chi$ and $\e \le \e_0$ and fix any $\lambda \in \Cinf(I)$ which is $1$ on $(0, \e_0/2)$ and $0$ on $[\e_0, 1]$. Fix an arbitary smoothing kernel $\tilde \psi^\circ \in \hlsk q V$ and define $\tilde\psi_{\e,x} \coleq \lambda(\e) \chi(x) \tilde\phi_{\e,x} + (1-\lambda(\e)\chi(x))\tilde \psi^\circ_{\e,x}$.
Then $\tilde \psi \in \hlsk q V$: any given $L \csub V$ can be decomposed as $L = L_1 \cup L_2$ with $L_1 \csub U \cap V$ and $L_2 \csub V \setminus \supp \chi$; for $\e \le \e_0/2$, (LSK1), (LSK2) and (LSK3) then are easily seen to be satisfied on $L_1$ and $L_2$. For $\e \le \e_0/2$ and $x \in K$, finally, $\tilde \psi_{\e,x} = \tilde \phi_{\e,x}$.

(LSK5): Let $u \in \ccD'(\Omega)$, $k \in \bN_0$, $X_1, \dotsc, X_k \in C^\infty(\Omega, \bR^n)$ and $\varphi \in \ccD(\Omega)$. By (LSK1) $\supp \tilde \phi_{\e,x}$ is contained, for small $\e$, in a relatively compact open neighborhood $U$ in $\Omega$ of $\supp \varphi$ for all $x \in \supp \varphi$. By the structure theorem for distributions we can write $u|_U = (-1)^{\abso{\beta}}\pd^\beta f|_U$ for a continuous function $f$ with support in an arbitrarily small neighborhood of $\overline{U}$, so 
%
%
%
$\langle \langle u, \Lie_{X_1}^x \dotsc \Lie_{X_k}^x \tilde\phi_{\e,x}\rangle , \varphi(x) \rangle$ is given by
\begin{align*}
 \langle \langle f(y), &(\pd_y^\beta \tilde \phi)_{\e,x}(y) \rangle, (-1)^k\Lie_{X_1} \dotsc \Lie_{X_k} \varphi(x) \rangle\\
&= \langle \langle f(y), ((\pd_{x+y} - \pd_x)^\beta \tilde \phi)_{\e,x}(y) \rangle, (-1)^k \Lie_{X_1} \dotsc \Lie_{X_k}\varphi(x) \rangle\\
&= \sum_{\beta' \le \beta} \binom{\beta}{\beta'} \langle \langle f(y), (\pd_{x+y}^{\beta'} (-\pd_{x})^{\beta - \beta'}\tilde \phi)_{\e,x}(y)\rangle, (-1)^k \Lie_{X_1} \dotsc \Lie_{X_k} \varphi(x) \rangle \\
&= \sum_{\beta' \le \beta} \binom{\beta}{\beta'} \langle \langle f(y), (\pd_{x+y}^{\beta'} \tilde \phi)_{\e,x}(y) \rangle, (-1)^k\pd^{\beta-\beta'}\Lie_{X_1} \dotsc \Lie_{X_k} \varphi(x) \rangle \\
&= \sum_{\beta' \le \beta} \binom{\beta}{\beta'} \Bigl( \langle \langle f(y) - f(x), (\pd_{x+y}^{\beta'} \tilde \phi)_{\e,x}(y) \rangle, (-1)^k \pd^{\beta-\beta'}\Lie_{X_1} \dotsc \Lie_{X_k}\varphi(x) \rangle \\
&\qquad + \langle f(x) \langle 1, (\pd_{x+y}^{\beta'} \tilde \phi)_{\e,x}(y) \rangle, (-1)^k \pd^{\beta-\beta'}\Lie_{X_1} \dotsc \Lie_{X_k} \varphi(x) \rangle \Bigr)
\end{align*}
Because $f(y) - f(x) \to 0$ uniformly for $x \in \supp \varphi$, $y \in B_{C\e(x)}$ (with $C$ from (LSK1)) and $\e \to 0$ and because $\pd_{x+y}^{\beta'}\tilde\phi_{\e,x}(y)$ is bounded as in (LSK2) the first part of the last sum converges to $0$ similarly as in \eqref{gehtgegennull}. By (LSK3) the limit of the second part is $\langle f(x), (-1)^k \pd^\beta \Lie_{X_1} \dotsc \Lie_{X_k} \varphi(x) \rangle = \langle \Lie_{X_1} \dotsc \Lie_{X_k} u, \varphi \rangle$.

(LSK6) was shown in Proposition \ref{skdiffeoinv}.

(LSK7): 
(LSK1) for $\tilde \psi$ is obvious.

(LSK2) for $\tilde \psi$: For $\alpha \le \delta$ (otherwise the expression is $0$) the derivative $(\pd_{x+y}^\alpha \pd_{y}^\beta \tilde \psi)_{\e,x}(y)$ is given by
\begin{multline*}
     \sum_{j=1}^\infty \lambda_j(\e) \sum_{\alpha \le \delta' \le \delta} \Biggl(\frac{(x-x_j)^{\delta' - \alpha}}{(\delta' - \alpha)!} \left(\frac{\e_j}{\e}\right)^{n+\abso{\beta}} \pd_{y}^{\beta}(\tilde\phi_{\delta'})_{\e_j, x_j}\left(\e_j \frac{y-x}{\e} + x_j\right)\Biggr)
    \end{multline*}
By (LSK2) this can be estimated uniformly for $x \in K$ by $\sum_j \lambda_j(\e) C (\e_j/\e)^{n+\abso{\beta}} \e_j^{-n-\abso{\beta}} = \sum_j \lambda_j(\e) C \e^{-n-\abso{\beta}} = O(\e^{-n-\abso{\beta}})$ for some constant $C>0$.

(LSK3) for $\tilde \psi$ is equivalent to
$\int f(y) (\pd_{x+y}^\alpha \tilde \psi)_{\e,x}(y)\,\ud y = \pd^\alpha(f(x)) + O(\e^{q+1})$ for $\alpha \le \delta$. Note that $\pd^\alpha(f(x))$ means the derivative of the constant $f(x)$, which is zero for $\alpha \ne 0$. The integral is (for $\e \le \e_0$ with $C,\e_0$ from (LSK1))
\[
 \sum_{j=1}^\infty \lambda_j(\e) \sum_{\alpha \le \delta' \le \delta} \frac{(x-x_j)^{\delta' - \alpha}}{(\delta' - \alpha)!} \int_{\ball{C\e}x} f(y) (\tilde \phi_{\delta'})_{\e_j, x_j} (\e_j \frac{y-x}{\e} + x_j)\,\ud y.
\]
Substituting $u = \e_j(y-x)/\e + x_j$ and forming the Taylor expansion of 
$f(\e(u-x_j)/\e_j +x)$ of order $q$ about $x$,
$\int f(y) (\pd_{x+y}^\alpha \tilde \psi)_{\e,x}(y)\,\ud y - \pd^\alpha (f(x))$ without the remainder term is given by
\begin{multline}\label{schaetz1}
 \sum_{j=1}^\infty \sum_{\alpha \le \delta' \le \delta} \sum_{\abso{\gamma} \le q} \lambda_j(\e) \frac{(x-x_j)^{\delta' - \alpha}}{(\delta' - \alpha)!} \left(\frac{\e_j}{\e}\right)^{-\abso{\gamma}} \frac{f^{(\gamma)}(x)}{\gamma!} \cdot \\
\Biggl(\int_{\ball{C\e_j}{x_j}} (u-x_j)^\gamma (\tilde \phi_{\delta'})_{\e_j, x_j} (u)\,\ud u - \pd^{\gamma +\delta'}1\Biggr).
\end{multline}
The term in parantheses is $O(\e_j^{q+1})$ so \eqref{schaetz1} can be estimated uniformly for $x \in K$ by
$\sum_{j=1}^\infty \sum_{\abso{\gamma} \le q}\lambda_j(\e) (\e_j/\e)^{- \abso{\gamma} } O(\e_j^{q+1}) = O(\e^{q+1})$. The remainder is
\begin{multline*}
 \sum_{j=1}^\infty \sum_{\alpha \le \delta' \le \delta} \sum_{\abso{\gamma} = q+1} \lambda_j(\e) \frac{(x-x_j)^{\delta' - \alpha}}{(\delta' - \alpha)!} \frac{q+1}{\gamma!} \e^{q+1}\cdot \\
\int_{\ball{C\e_j}{x_j}} \int_0^1 (1-s)^q (\pd^\gamma f)(x + s\e(u-x_j)/\e_j)\, \ud s \, \left(\frac{u - x_j}{\e_j}\right)^\gamma (\tilde \phi_{\delta'})_{\e_j, x_j} (u)\,\ud u.
\end{multline*}
The double integral is bounded uniformly for $x \in K$, so $O(\e^{q+1})$ remains.
\end{proof}

\section{Global Theory}

We will now extend the construction to manifolds. This requires little more than the right definitions, with which all properties follow effortlessly from the local case.

\begin{definition}\label{mf_basedef}Let $M$ be a manifold.
\begin{enumerate}[label=(\roman*)]
 \item The basic space is $\hE M \coleq \Cinf(\ocM \times M)$. The embeddings $\iota\colon \Dp M \to \hE M$ and $\sigma\colon \Cinf(M) \to \hE M$ are defined as $(\iota u)(\omega, x) = \langle u, \omega \rangle$ for a distribution $u$ and $(\sigma f)(\omega, x) = f(x)$ for a smooth function $f$ on $M$, where $\omega \in \ocM$ and $x \in M$.
\item Let $\mu\colon M \to M'$ be a diffeomorphism from $M$ to another manifold $M'$. Given a generalized function $R \in \hE{M'}$, its pullback $\mu^* R \in \hE M$ is defined as $(\mu^* R)(\omega, x) = R(\mu_*\omega, \mu x)$.
\item The Lie derivative of $R \in \hE M$ with respect to a smooth vector field $X$ on $M$ is defined as $(\hat \Lie_X R)(\omega, x) = - \ud_1 R(\omega, x)(\Lie_X \omega) + (\Lie_X^x R)(\omega, x)$.
\end{enumerate}
\end{definition}

\begin{remark}
By the same reasoning as in the local case $\mu^*R$ and $\hat\Lie_X R$ are smooth; $\hE M$ is an associative, commutative algebra with unit $\sigma(1)\colon(\omega, x) \mapsto 1$, $\iota$ is a linear embedding and $\sigma$ an algebra embedding. As before, pullback and Lie derivatives commute with the embeddings and $\hat \Lie_X$ is only $\bR$-linear in $X$ but not $C^\infty(M)$-linear.
\end{remark}




We use the following notation for the relationship between local and global expressions on a chart $(U, \psi)$:
\begin{enumerate}[label=(\roman*)]
 \item For smooth vector fields, the isomorphism $\fX(U) \cong C^\infty(\psi(U), \bR^n)$ is written as $X \mapsto X_U$ with inverse $Y \mapsto Y^U$.
 \item For $n$-forms, the isomorphism $\Omega^n(U) \cong C^\infty(\psi(U))$ is written as $\omega \mapsto \omega_U$ with inverse $\varphi \mapsto \varphi^U$, where $\omega_U(y) \coleq \varphi_*(\omega)(y)(e_1,\dotsc,e_n)$.
 \item For distributions, the isomorphism $\ccD'(U) \cong \ccD'(\varphi(U))$ is given by $u \mapsto u_U$ with $\langle u_U, \varphi \rangle \coleq \langle u, \varphi^U \rangle$ and its inverse $v \mapsto v^U$, $\langle v^U, \omega \rangle \coleq \langle v, \omega_U \rangle$.
 \item The isomorphism of basic spaces $C^\infty(\Omega^n_c(U) \times U) \cong C^\infty(\ccD(\varphi(U)) \times \varphi(U))$ is given by $R \mapsto R_U$ with $R_U(\varphi, x) \coleq R(\varphi^U, \varphi^{-1} x)$ with inverse $S \mapsto S^U$, $S^U(\omega, x) \coleq S(\omega_U, \varphi x)$.
\end{enumerate}

We then have $(\Lie_X \omega)_U = \Lie_{X_U}(\omega_U)$ and $(\hat \Lie_X R)(\omega, x) = (\hat \Lie_{X_U} R_U)(\omega_U, \varphi x)$. Next we define smoothing kernels on manifolds.

\begin{definition}\label{mf_skdef}A \emph{smoothing kernel of order $q \in \bN_0$} on a manifold $M$ is defined to be a mapping $\Phi \in \Cinf(I \times M, \ocM)$, $(\e,x) \to [ y \to \Phi_{\e,x}(y)]$, satisfying the following conditions for any Riemannian metric $h$ on $M$:
\begin{enumerate}
 \item[(SK1)] $\forall K \csub M$ $\exists \e_0,C>0$ $\forall x \in K$ $\forall \e<\e_0$: $\supp \Phi_{\e,x} \subseteq B^h_{C\e}(x)$,
 \item[(SK2)] $\forall K \csub M$ $\forall j,k \in \bN_0$ $\forall X_1,\dotsc,X_j,Y_1,\dotsc,Y_k \in \fX(M)$:
\[ \norm{(\Lie^{x+y}_{X_1}\dotsm \Lie^{x+y}_{X_j} \Lie^{y}_{Y_1} \dotsm \Lie^{y}_{Y_k} \Phi)_{\e,x}(y)}_h = O(\e^{-n-k}) \]
 uniformly for $x \in K$ and $y \in M$,
 \item[(SK3)] $\forall K \csub M$ $\forall j \in \bN_0$ $\forall X_1,\dotsc,X_j \in \fX(M)$  $\forall f \in \Cinf(M)$:
\[
\int_M f \cdot (\Lie^x_{X_1} \dotsm \Lie^x_{X_j} \Phi)_{\e,x} = (\Lie_{X_1} \dotsm \Lie_{X_j} f)(x) + O(\e^{q+1})
\]
uniformly for $x \in K$.
\end{enumerate}
The space of all smoothing kernels of order $q$ on $M$ is denoted by $\hsk q M$ and is an affine subspace of $\Cinf(I \times M, \ocM)$. The linear subspace parallel to it, denoted by $\hsk {q0} M$, is given by all $\Phi$ satisfying (SK1), (SK2) and the following condition:
\begin{enumerate}
 \item[(SK3')] $\forall K \csub M$ $\forall j \in \bN_0$ $\forall X_1,\dotsc,X_j \in \fX(M)$  $\forall f \in \Cinf(M)$:
\[
\int_M f \cdot (\Lie^x_{X_1} \dotsm \Lie^x_{X_j} \Phi)_{\e,x} = O(\e^{q+1})
\]
uniformly for $x \in K$.
\end{enumerate}

\end{definition}
Note that by \cite[Lemma 3.4]{global} Definition \ref{mf_skdef} does not depend on the choice of the Riemannian metric. Given a chart $(U, \varphi)$ on $M$ we see that smoothing kernels on $U$ correspond exactly to smoothing kernels on $\varphi(U)$ as in Definition \ref{skdef}:

\begin{proposition}\label{skloc} Let $(U,\varphi)$ be a chart on $M$. Then $\hsk q U \cong \hlsk q {\varphi(U)}$ as affine spaces and $\hsk {q0} U \cong \hlsk {q0} {\varphi(U)}$ as linear spaces. 
\end{proposition}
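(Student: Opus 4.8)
The plan is to write down the chart-induced correspondence explicitly and then reduce the statement to a term-by-term translation of the defining conditions. Using the isomorphism $\Omega^n(U)\cong C^\infty(\varphi(U))$, $\omega\mapsto\omega_U$, together with transport of the base point along $\varphi^{-1}$, I associate to $\Phi\in\Cinf(I\times U,\Omega^n_c(U))$ the map $\tilde\phi\in\Cinf(I\times\varphi(U),\ccD(\varphi(U)))$ given by $\tilde\phi_{\e,x}\coleq(\Phi_{\e,\varphi^{-1}x})_U$. This is exactly the basic-space isomorphism $R\mapsto R_U$ of the chart with the parameter $\e$ carried along, and by the exponential law and smoothness of $\varphi$ (precisely as was used to see that $\mu^*\tilde\phi$ is smooth) it is a linear isomorphism of the ambient spaces $\Cinf(I\times U,\Omega^n_c(U))\cong\Cinf(I\times\varphi(U),\ccD(\varphi(U)))$. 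Since $\hsk qU$ and $\hlsk q{\varphi(U)}$ are affine subspaces of these, with $\hsk{q0}U$ and $\hlsk{q0}{\varphi(U)}$ the parallel linear subspaces, it remains only to prove that $\tilde\phi$ satisfies (LSK1)--(LSK3) exactly when $\Phi$ satisfies (SK1)--(SK3), and likewise (LSK3') $\Leftrightarrow$ (SK3'); the affine and linear isomorphism assertions then follow from the fact that a bijection of ambient spaces restricts to a bijection of the respective subspaces.

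Before the translation I invoke \cite[Lemma 3.4]{global}: since $\hsk qU$ is independent of the Riemannian metric, I may fix the convenient metric $h\coleq\varphi^*g$ pulled back from the Euclidean metric $g$ on $\varphi(U)$. With this choice $\varphi$ is a Riemannian isometry, so $B^h_r(x)=\varphi^{-1}(B_r(\varphi x))$ and (SK1) for $\Phi$ is literally (LSK1) for $\tilde\phi$; moreover the coordinate coframe is orthonormal, whence $\norm\omega_h=\abso{\omega_U}$ pointwise, so the left-hand side of (SK2) equals the modulus appearing in (LSK2). For the integral conditions I use that $\int_U f\cdot\omega=\int_{\varphi(U)}(f\circ\varphi^{-1})\,\omega_U\,\ud y$ together with the intertwining relation $(\Lie_X\omega)_U=\Lie_{X_U}\omega_U$, and that $C^\infty(U)\cong C^\infty(\varphi(U))$ via $f\mapsto f\circ\varphi^{-1}$ exhausts all admissible test functions. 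Taking the $X_i$ to be the coordinate vector fields $e_i$, for which $\Lie_{e_i}=\pd_i$ on functions (the divergence term vanishes), turns (SK3) into (LSK3) and (SK3') into (LSK3'); conversely each $\Lie_{X_U}=\D_{X_U}+\Div X_U$ is a first-order operator whose coefficients are bounded on compacta, so (LSK3)/(LSK3') give back (SK3)/(SK3').

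The substantive step is the equivalence (SK2) $\Leftrightarrow$ (LSK2), where the bookkeeping of growth orders must be matched. In coordinates $\Lie^y_Y$ corresponds to $\D^y_{Y_U}+\Div Y_U=\sum_iY_U^i(y)\pd_{y_i}+\Div Y_U(y)$ and, by the identity of Remark \ref{remark}, the combined derivative $\Lie^{x+y}_X$ corresponds to $\D^{x+y}_{X_U}+\Div X_U$; both are first order, but only the $\Lie^y$-type factors carry a genuine $\pd_y$. Choosing all vector fields to be the $e_i$ immediately yields $\abso{(\pd_{x+y}^\alpha\pd_y^\beta\tilde\phi)_{\e,x}(y)}=O(\e^{-n-\abso\beta})$ from (SK2) with $k=\abso\beta$, i.e.\ (LSK2). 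For the converse I expand the product of these first-order operators by the Leibniz rule: each $\Lie^{x+y}$-factor produces either a penalty-free $\pd_{x+y}$ or a zeroth-order term, and each $\Lie^y$-factor produces at most one $\pd_y$, so every resulting term carries at most $k$ pure $y$-derivatives on $\tilde\phi$, the surplus being absorbed into coefficients that are smooth and bounded on compact sets; (LSK2) then bounds each term by $O(\e^{-n-k})$ uniformly.

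I expect this matching of the $y$-derivative count against the exponents $-n-k$ versus $-n-\abso\beta$ --- ensuring that the zeroth-order divergence contributions and the differentiation of the chart-dependent coefficients never raise the effective $y$-order --- to be the only genuine obstacle; the transport of (SK1), (SK3) and (SK3') is the formal computation carried out above, after which the affine, resp.\ linear, isomorphism is immediate.
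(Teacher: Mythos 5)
Your proposal is correct and takes essentially the same route as the paper: the same chart-induced isomorphism $\tilde\phi_{\e,x}=(\Phi_{\e,\varphi^{-1}x})_U$, the same choice of the pullback of the Euclidean metric to identify (SK1) with (LSK1), and the same reduction of (SK2), (SK3) and (SK3') to coordinate vector fields. Your Leibniz expansion for the converse direction of (SK2) is in fact more explicit than the paper, which simply asserts that coordinate vector fields suffice; just note that the piece $(X^i(y)-X^i(x))\pd_{y_i}$ of $\Lie^{x+y}_X$ is a genuine first-order $y$-derivative whose coefficient is $O(\e)$ on the support, rather than a zeroth-order term, exactly as handled in Remark \ref{remark}.
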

\begin{proof}
The isomorphism is $\tilde \phi_{\e,x} \coleq (\Phi_{\e, \varphi^{-1} x})_U$ with inverse $\Phi_{\e,x} \coleq (\tilde \phi_{\e, \varphi x})^U$.
Taking for $h$ the pullback metric of the Euclidean metric on $\varphi(U)$ to $U$ along $\varphi$, then given $K \csub \varphi(U)$ $\exists \e_0,C$ such that $\supp \Phi_{\e,x} \subseteq B^h_{C\e}(x)$ $\forall \e\le \e_0$ $\forall x \in \varphi^{-1}(K)$ and thus
$\supp \tilde \phi_{\e,x} = \varphi(\supp \Phi_{\e, \varphi^{-1} x}) \subseteq \varphi(B_{C\e}^h(\varphi^{-1}(x))) = B_{C\e}(x)$ $\forall \e\le\e_0$, $x \in K$, thus (SK1) implies (LSK1) for $\tilde \phi$; the converse holds by the same reasoning.

Then, $(\pd_{i_1}^{x+y} \dotsm \pd_{i_k}^{x+y} \pd_{j_1}^y \dotsm \pd_{j_l}^y \tilde \phi)_{\e,x}$ equals $((\Lie_{\pd_{i_1}}^{x+y} \dotsm \Lie_{\pd_{i_k}}^{x+y} \Lie_{\pd_{j_1}}^y \dotsm \Lie_{\pd_{j_l}}^y \Phi)_{\e, \varphi^{-1} x})_U$ which implies that (LSK2) for $\tilde \phi$ is equivalent to (SK2) for $\Phi$, because in (SK2) it obviously suffices to restrict the $X_1,\dotsc,Y_k$ to be elements of $\{\, \pd_1, \dotsc, \pd_n\,\}$.

By the same reasoning, (LSK3) for $\tilde \phi$ is equivalent to (SK3) for $\Phi$ because of
\[ \int_U f \cdot (\Lie_{\pd_{i_1}}^x \dotsm \Lie_{\pd_{i_j}}^x \Phi)_{\e,x} = \int_{\varphi(U)} (f \circ \varphi^{-1})(y) \cdot (\pd_{i_1}^x \dotsm \pd_{i_k}^x  \tilde \phi)_{\e, \varphi x}(y)\,\ud y \]
and similarly for (LSK3') and (SK3').
\end{proof}

Using this isomorphism we also write $\tilde \phi = \Phi_U$ and $\Phi = \tilde \phi^U$, respectively.

\begin{definition}Let $\mu\colon M \to M'$ be a diffeomorphism. Then we define the pullback $\mu^*\Phi$ of a smoothing kernel $\Phi \in \hsk q {M'}$ by $(\mu^*\Phi)_{\e,x} \coleq \mu^*(\Phi_{\e, \mu x})$.
\end{definition}

\begin{proposition}\label{mf_additional}The smoothing kernels of Definition \ref{mf_skdef} satisfy these additional properties:
 \begin{enumerate}
\item[(SK4)]\label{sk4} Let $U,V$ be open subsets of $M$, $K \csub U \cap V$ and $q \in \bN_0$. Given $\Phi \in \hsk q U$ there exist $\e_0>0$ and $\Psi \in \hsk q V$ such that $\Phi_{\e,x} = \Psi_{\e,x}$ for $\e < \e_0$ and $x \in K$.
\item[(SK5)]\label{sk5} $\forall u \in \Dp M$ $\forall \Phi \in \hsk 0 M$ $\forall k \in \bN_0$ $\forall X_1,\dotsc,X_k \in \fX(M)$: $\langle u, \Lie_{X_1}^x \dotsc \Lie_{X_k}^x \Phi_{\e,x} \rangle$ converges (weakly) to $\Lie_{X_1} \dotsc \Lie_{X_k} u$ in $\Dp M$.
\item[(SK6)]\label{sk6} If $\mu\colon M \to M'$ is a diffeomorphism and $\Phi' \in \hsk q {M'}$ then $\mu^*\Phi' \in \hlsk q M$.
\end{enumerate}
\end{proposition}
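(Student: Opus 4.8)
The three properties are local in nature — each is tested on compact subsets, and condition (SK1) forces the relevant supports to shrink to the base point — so the plan is to reduce all of them to their Euclidean counterparts from Proposition~\ref{additional} by means of the chart isomorphism of Proposition~\ref{skloc}. For (SK4) I would mimic the local proof of (LSK4) verbatim on the manifold. Pick $\chi \in \ccD(U \cap V)$ with $\chi \equiv 1$ on $K$, a cutoff $\lambda \in \Cinf(I)$ as in the local argument, and a reference kernel $\Psi^\circ \in \hsk q V$ (which exists since $\hsk q V$ is nonempty by the manifold analogue of Proposition~\ref{notempty}); then set $\Psi_{\e,x} \coleq \lambda(\e)\chi(x)\Phi_{\e,x} + (1-\lambda(\e)\chi(x))\Psi^\circ_{\e,x}$. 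Since $\chi(x)$ and $\lambda(\e)$ are scalars while $\Phi_{\e,x}$ and $\Psi^\circ_{\e,x}$ are $n$-forms, this is a well-defined element of $\Cinf(I \times V, \Omega^n_c(V))$, and decomposing any $L \csub V$ as $L_1 \cup L_2$ with $L_1 \csub U \cap V$ and $L_2 \csub V \setminus \supp\chi$ lets one check (SK1)--(SK3) exactly as in the local case. For small $\e$ and $x \in K$ one then has $\Psi_{\e,x} = \Phi_{\e,x}$.

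For (SK6) I would reduce to Proposition~\ref{skdiffeoinv}. Fix $K \csub M$, cover it by finitely many charts $U_i$ chosen small enough that each $\mu(U_i)$ lies in a chart $U_i'$ of $M'$, and write $K = \bigcup_i K_i$ with $K_i \csub U_i$ compact. By (SK1) applied to $\mu^*\Phi'$, for $x \in K_i$ and small $\e$ the support of $(\mu^*\Phi')_{\e,x}$ lies in $U_i$, so on $K_i$ the conditions (SK1)--(SK3) involve only the chart. Under Proposition~\ref{skloc} the pullback $\mu^*\Phi'$ corresponds on $U_i$ to the Euclidean pullback, along the coordinate expression $\varphi_i' \circ \mu \circ \varphi_i^{-1}$, of the local representative of $\Phi'$ on $U_i'$ (which is a smoothing kernel of order $q$, after restricting with (SK4)); Proposition~\ref{skdiffeoinv} then yields (LSK1)--(LSK3) on $\varphi_i(K_i)$. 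Transferring back through Proposition~\ref{skloc} and taking the finite union over $i$ gives (SK1)--(SK3) on $K$.

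For (SK5), which I expect to be the main obstacle, I would again localize, but the bookkeeping is more delicate because $u$ need not have compact support. Fix $u$, $\Phi$, the $X_i$ and a test object $\omega \in \ocM$; it suffices to prove $\int_M \langle u, \Lie^x_{X_1}\dotsm\Lie^x_{X_k}\Phi_{\e,x}\rangle\,\omega(x) \to \langle \Lie_{X_1}\dotsm\Lie_{X_k} u, \omega\rangle$. Choosing a partition of unity subordinate to a chart cover I reduce to $\omega$ supported in a single chart $U$; then by (SK1), for $x \in \supp\omega$ and small $\e$ the form $\Phi_{\e,x}$ is supported in $U$, so I may replace $\Phi$ by some $\Psi \in \hsk 0 U$ via (SK4). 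Transferring to $\varphi(U)$ through Proposition~\ref{skloc}, the base-point Lie derivatives $\Lie^x_{X_i}$ become directional derivatives $\D^x_{(X_i)_U}$, the pairing $\langle u, \cdot\rangle$ becomes $\langle u_U, \cdot\rangle$, and the integral against $\omega$ becomes the $\ccD'(\varphi(U))$-pairing with the local representative of $\omega|_U$; (LSK5) then delivers convergence to the local Lie derivative of $u_U$, which corresponds precisely to the asserted global limit. Summing the finitely many pieces of the partition of unity finishes the proof. Matching the non-compactly-supported $u$ against compactly supported kernels through the support control of (SK1), and keeping the global-versus-local Lie-derivative identities straight, is the only genuinely delicate point; the remainder is mechanical once Propositions~\ref{skloc} and~\ref{skdiffeoinv} are in hand.
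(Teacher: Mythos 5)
Your proposal is correct and follows essentially the same route as the paper: (SK4) by repeating the local cutoff construction verbatim, (SK5) by localizing $\omega$ to a chart domain via a partition of unity, using (SK1) to control supports, and transferring to (LSK5) through the isomorphism of Proposition~\ref{skloc}, and (SK6) by restricting with (SK4), passing to coordinate representatives, and applying (LSK6). The only cosmetic differences are that you cover $K$ by finitely many charts in (SK6) where the paper assumes a single chart suffices, and that you explicitly note the need for nonemptiness of $\hsk q V$ in (SK4); neither changes the argument.
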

\begin{proof}
(SK4) is proven exactly as in the local case.

(SK5): Let $\omega \in \ocM$ with support in a set $K$; by using a partition of unity we may without limitation of generality assume that $K$ is contained a chart domain $U$. For small $\e$, $\supp \Phi_{\e,x} \subseteq U$ for all $x \in \supp \omega$, thus $\langle \langle u, \Lie_{X_1}^x \dotsc \Lie_{X_k}^x \Phi_{\e,x} \rangle, \omega(x) \rangle$ equals $\langle \langle u_U, \Lie^x_{(X_1)_U} \dotsc \Lie^x_{(X_k)_U} (\Phi_U)_{\e,x} \rangle, \omega_U(x) \rangle$ and converges to $\langle \Lie_{(X_1)_U} \dotsc \Lie_{(X_k)_U} u_U, \omega_U \rangle$ which in turn equals $\langle \Lie_{X_1} \dotsc \Lie_{X_k} u, \omega \rangle$.


(SK6): Fixing $K \csub M$ for verifying (SK1) -- (SK3) for $\mu^*\Phi'$, we may assume that there are charts $(U, \varphi)$ on $M$ and $(U', \varphi')$ on $M'$ such that $K \csub U$ and $\mu(U) = U'$. Given $\Phi' \in \hsk q {M'}$ there exists, by (SK4), a smoothing kernel $\Psi' \in \hsk q {U'}$ such that $\Phi'_{\e,x} = \Psi'_{\e,x}$ for $x \in \mu(K)$ and small $\e$, to which by Proposition \ref{skloc} there corresponds a local smoothing kernel $\tilde \psi' \in \hlsk q {\varphi'(U')}$. The diffeomorphism $\mu' \coleq \varphi' \circ \mu \circ \varphi^{-1}$ from $\varphi(U)$ to $\varphi'(U')$ gives, by (LSK6), a local smoothing kernel $\tilde \phi \coleq \mu'^*\tilde \psi' \in \hlsk q {\varphi(U)}$ to which in turn there corresponds a smoothing kernel $\Phi \in \hsk q U$. Because $(\mu^* \Psi')_{\e,x} = \Phi_{\e,x}$, the result is obtained.
\end{proof}

(LSK7) has no direct equivalent on the manifold. We come to the definition of moderateness and negligibility.

\begin{definition}[\textbf{D3, D4}]\label{mf_modneg}\begin{enumerate}
\item[(i)] $R \in \hE M$ is called \emph{moderate} if $\forall K \csub M$ $\forall j \in \bN_0$ $\exists q \in \bN_0$ $\exists N \in \bN$ $\forall \Phi \in \hsk qM$ $\forall X_1,\dotsc,X_j$ we have the estimate $\Lie^x_{X_1}\dotsm \Lie^x_{X_j} (R(\Phi_{\e,x}, x)) = O(\e^{-N})$ uniformly for $x \in K$.
The set of all moderate elements of $\hE M$ is denoted by $\hEm M$.
 \item[(ii)] $R \in \hE M$ is called \emph{negligible} if $\forall K \csub M$ $\forall j \in \bN_0$ $\forall m \in \bN$ $\exists q \in \bN_0$ $\forall \Phi\in \hsk q M$ $\forall X_1,\dotsc,X_j$ we have the estimate $\Lie^x_{X_1}\dotsm \Lie^x_{X_j} (R(\Phi_{\e,x}, x)) = O(\e^m)$ uniformly for $x \in K$. The set of all negligible elements of $\hE M$ is denoted by $\hN M$.
\end{enumerate}
\end{definition}

\begin{corollary}\label{modnegloc} Let $(U, \varphi)$ be a chart on $M$. Then $R \in \hE U$ is moderate or negligible, respectively, if $R_U \in \hE{\varphi(U)}$ is so.
\end{corollary}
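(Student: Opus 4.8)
The plan is to transport the testing of $R$ on the chart $U$ to the local testing of $R_U$ on $\varphi(U)$, exploiting that the quantities appearing in Definition~\ref{mf_modneg} are scalar functions of the base point, for which the chart behaves transparently. Two facts do the work: Proposition~\ref{skloc}, which provides the affine isomorphism $\hsk q U \cong \hlsk q{\varphi(U)}$, $\Phi \leftrightarrow \tilde\phi$ with $\tilde\phi_{\e,x} = (\Phi_{\e,\varphi^{-1}x})_U$; and the elementary observation that on scalar functions the Lie derivative along a vector field is just its directional derivative, which pulls back compatibly under $\varphi$.

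First I would record the key identity. Writing $R = (R_U)^U$ and unwinding the definitions of the isomorphism of basic spaces and of the smoothing-kernel correspondence, one obtains
\[ R(\Phi_{\e,x}, x) = R_U(\tilde\phi_{\e,\varphi x}, \varphi x), \qquad x \in U, \]
since $(\Phi_{\e,x})_U = \tilde\phi_{\e,\varphi x}$. Thus the scalar function $x \mapsto R(\Phi_{\e,x}, x)$ on $U$ is precisely the $\varphi^{-1}$-pullback of the local testing function $y \mapsto R_U(\tilde\phi_{\e,y}, y)$ on $\varphi(U)$.

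Next I would compare the derivatives. As these are scalar functions, each $\Lie^x_{X_i}$ acts as the directional derivative, and the transformation law for directional derivatives of scalars gives
\[ \bigl(\Lie^x_{X_1}\dotsm \Lie^x_{X_j}\,(R(\Phi_{\e,\cdot}, \cdot))\bigr)\circ \varphi^{-1} = \D_{(X_1)_U}\dotsm \D_{(X_j)_U}\,\bigl(R_U(\tilde\phi_{\e,\cdot}, \cdot)\bigr), \]
where $(X_i)_U \in \Cinf(\varphi(U), \bR^n)$ is the local representative of $X_i$. Expanding the product of directional derivatives on the right exhibits it as a finite combination $\sum_{\abso{\alpha}\le j} c_\alpha(y)\,\pd_y^\alpha$ of partial derivatives with smooth coefficients $c_\alpha$, bounded on the compact set $\varphi(K)$.

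The corollary then follows by matching quantifiers. To show $R$ is moderate, fix $K \csub U$, $j$ and $X_1,\dotsc,X_j$, and set $\tilde K \coleq \varphi(K) \csub \varphi(U)$. Moderateness of $R_U$ applied to $\tilde K$ and to each of the finitely many $\alpha$ with $\abso{\alpha}\le j$ yields orders $q_\alpha$ and exponents $N_\alpha$; put $q \coleq \max_{\abso{\alpha}\le j} q_\alpha$ and $N \coleq \max_{\abso{\alpha}\le j} N_\alpha$. Because $\hlsk q{\varphi(U)} \subseteq \hlsk{q_\alpha}{\varphi(U)}$ for $q \ge q_\alpha$, every $\tilde\phi \in \hlsk q{\varphi(U)}$ satisfies all these estimates, so by the two displayed identities and boundedness of the $c_\alpha$,
\[ \sup_{x \in K}\abso{\Lie^x_{X_1}\dotsm\Lie^x_{X_j}(R(\Phi_{\e,x}, x))} \le \sum_{\abso{\alpha}\le j}\norm{c_\alpha}_{\infty,\tilde K}\,\sup_{y\in\tilde K}\abso{\pd_y^\alpha(R_U(\tilde\phi_{\e,y}, y))} = O(\e^{-N}) \]
for the $\Phi \in \hsk q U$ corresponding to $\tilde\phi$. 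The negligibility case is identical, with $O(\e^{-N})$ replaced by $O(\e^m)$ and the additional quantifier $\forall m$. The only genuinely delicate point is the bookkeeping: one must secure a single order $q$ (and, for moderateness, a single exponent $N$) that works simultaneously for all derivatives of total order at most $j$, which is exactly what forces the maxima over the finite index set $\{\abso{\alpha}\le j\}$ and relies on the inclusion of higher-order smoothing-kernel spaces into lower-order ones.
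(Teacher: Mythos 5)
Your proposal is correct and follows essentially the same route as the paper, which simply invokes the identity $R(\Phi_{\e,x}, x) = R_U((\Phi_U)_{\e, \varphi x}, \varphi x)$ together with Proposition \ref{skloc} and the definitions. You merely make explicit the details the paper leaves implicit, namely rewriting the iterated Lie derivatives as combinations of partial derivatives with coefficients bounded on $\varphi(K)$ and taking the maximum of the orders $q_\alpha$ and exponents $N_\alpha$ over the finitely many $\abso{\alpha}\le j$.
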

\begin{proof}Using the relation $R(\Phi_{\e,x}, x) = R_U((\Phi_U)_{\e, \varphi x}, \varphi x)$ the claim is immediate from the definitions and Proposition \ref{skloc}.
\end{proof}

Again we can get rid of the derivatives in the test for negligibility.

\begin{corollary}$R \in \hEm M$ is negligible if and only if Definition \ref{mf_modneg} (ii) holds for $j=0$, which is,
$\forall K \csub M$ $\forall m \in \bN$ $\exists q \in \bN_0$ $\forall \Phi\in \hsk q M$ $R(\Phi_{\e,x}, x) = O(\e^m)$ uniformly for $x \in K$.
\end{corollary}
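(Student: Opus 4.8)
The ``only if'' direction is immediate: if $R$ is negligible then Definition \ref{mf_modneg} (ii) holds in particular for $j=0$. For the converse the plan is to localise to charts and there invoke the corresponding local Proposition (the analogue of this statement for open subsets of $\bR^n$, proven above: a moderate $R\in\hEm\Omega$ is negligible as soon as its negligibility test holds for $\alpha=0$), transported to the manifold via Proposition \ref{skloc} and Corollary \ref{modnegloc}.

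Concretely, fix $K\csub M$, $j\in\bN_0$ and $m\in\bN$. Cover $K$ by finitely many chart domains and write $K=\bigcup_{i=1}^r K_i$ with each $K_i\csub U_i$ for a chart $(U_i,\varphi_i)$. On such a chart the identity $R(\Phi_{\e,x},x)=R_{U_i}((\Phi_{U_i})_{\e,\varphi_i x},\varphi_i x)$ holds, and since the Lie derivative of a scalar function is its directional derivative, $\Lie^x_{X_1}\dotsm\Lie^x_{X_j}(R(\Phi_{\e,x},x))$ expands, in the coordinates $\varphi_i$, into a finite combination of the coordinate derivatives $\pd^\alpha_x(R_{U_i}((\Phi_{U_i})_{\e,x},x))$ with $\abso\alpha\le j$ and coefficients that are smooth, hence bounded on the compact set $\varphi_i(K_i)$. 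It therefore suffices to control these coordinate derivatives, i.e.\ to show that $R_{U_i}$ satisfies the full local negligibility test of Definition \ref{locmodneg} (ii).

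To apply the local Proposition I would first observe that $R_{U_i}$ is moderate: moderateness restricts to open subsets by the same extension mechanism as (SK4), so that $R|_{U_i}\in\hEm{U_i}$ and then $R_{U_i}\in\hEm{\varphi_i(U_i)}$ by Corollary \ref{modnegloc}. Next I would verify the $\alpha=0$ local test for $R_{U_i}$: given $L\csub\varphi_i(U_i)$ and $m$, apply the global hypothesis with the compact set $\varphi_i^{-1}(L)$ and the same $m$ to obtain an order $q$; for an arbitrary $\tilde\phi\in\hlsk q{\varphi_i(U_i)}$ use Proposition \ref{skloc} to pass to $\Phi^\circ\in\hsk q{U_i}$, extend it by (SK4) to a global $\Psi\in\hsk q M$ with $\Psi_{\e,x}=\Phi^\circ_{\e,x}$ for $x$ near $\varphi_i^{-1}(L)$ and small $\e$, and read off $R_{U_i}(\tilde\phi_{\e,y},y)=R(\Psi_{\e,\varphi_i^{-1}y},\varphi_i^{-1}y)=O(\e^m)$ on $L$ from the chart identity. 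With moderateness and the $\alpha=0$ test established, the local Proposition yields that $R_{U_i}$ is (fully) negligible.

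Finally I would reassemble the pieces: for each $i$, negligibility of $R_{U_i}$ furnishes, for the compact set $\varphi_i(K_i)$, our $m$ and each of the finitely many $\abso\alpha\le j$, an order (taking the maximum over these $\alpha$) $q_i$ such that every $\tilde\phi\in\hlsk{q_i}{\varphi_i(U_i)}$ gives $\pd^\alpha_x(R_{U_i}(\tilde\phi_{\e,x},x))=O(\e^m)$ on $\varphi_i(K_i)$. Setting $q=\max_i q_i$, the chart representation $\Phi_{U_i}$ of any $\Phi\in\hsk q M$ lies in $\hsk q{U_i}\cong\hlsk q{\varphi_i(U_i)}$, which is contained in $\hlsk{q_i}{\varphi_i(U_i)}$ since a kernel of order $q\ge q_i$ is in particular of order $q_i$; hence the coordinate derivatives, and therefore $\Lie^x_{X_1}\dotsm\Lie^x_{X_j}(R(\Phi_{\e,x},x))$, are $O(\e^m)$ on each $K_i$ and thus on $K$. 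The main obstacle is precisely the bookkeeping of the $\alpha=0$ verification: correctly converting the global $j=0$ hypothesis into the local $\alpha=0$ data demanded by the local Proposition, which hinges on combining the chart localisation of kernels (Proposition \ref{skloc}) with their extension (SK4) so that the tested global kernel agrees with the prescribed local one on the relevant compact set. Once this matching is in place, the remaining estimates are routine.
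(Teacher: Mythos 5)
Your proposal is correct and follows exactly the route the paper intends (it states this corollary without proof, relying implicitly on chart localization via Proposition \ref{skloc}, Corollary \ref{modnegloc}, (SK4) and the local proposition on dropping derivatives, which is precisely what you spell out). The only point to tidy is that in the final reassembly the kernel agreement obtained from (SK4) must hold on an open neighbourhood of each $K_i$ (not just on $K_i$) before taking the Lie derivatives in $x$, which is arranged by the usual choice of $K_i \csub L_i \csub U_i$.
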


\begin{definition}
 Let $R \in \hE M$ and $M' \subseteq M$ open. Then the restriction $R|_{M'} \in \hE {M'}$ is defined as $R|_{M'}(\omega, x) \coleq R(\omega, x)$ for $\omega \in \Omega^n_c(M') \subseteq \ocM$ and $x \in M'$.
\end{definition}
As in the local case the following is an immediate consequence of (SK4).
\begin{proposition}\label{mf_localize}
\begin{enumerate}
 \item[(i)]Let $M' \subseteq M$ be open and $R \in \hE M$. If $R$ is moderate or negligible, respectively, then so is $R|_{M'}$.
 \item[(ii)] Let $(U_\alpha)_\alpha$ be an open covering of $M$ and $R \in \hE M$. If for all $\alpha$, $R|_{U_\alpha}$ is moderate or negligible, respectively, then so is $R$.
\end{enumerate}
\end{proposition}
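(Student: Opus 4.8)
The plan is to follow the same pattern as the local Proposition \ref{localize}, using (SK4) as the essential tool for passing between smoothing kernels on the ambient space and on the open pieces, and (SK1) to control supports. Throughout, the only point requiring care is that the tests in Definition \ref{mf_modneg} involve iterated Lie derivatives $\Lie^x_{X_1}\dotsm\Lie^x_{X_j}$ in the base point $x$: since (SK4) matches two kernels only on a prescribed compact set, I will always apply it on a slightly larger compact set $K'$ with $K \csub \mathrm{int}\,K'$, so that the matching holds on an open neighborhood of $K$ and hence the iterated $x$-Lie-derivatives of the two resulting functions of $x$ agree on $K$.

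For part (i), fix $K \csub M'$ and $j$. Moderateness (resp.\ negligibility, for a given $m$) of $R$ on $M$ supplies $q$ (and $N$), valid for \emph{all} vector fields on $M$ by the quantifier order in Definition \ref{mf_modneg}. Given $X_1,\dotsc,X_j \in \fX(M')$, extend each $X_i$ to $\tilde X_i \in \fX(M)$ agreeing with $X_i$ on a neighborhood of $K$ (multiply by a cutoff supported in $M'$ and equal to $1$ near $K$). Given any $\Phi \in \hsk q{M'}$, apply (SK4) with $U=M'$, $V=M$ and a compact $K'$ satisfying $K \csub \mathrm{int}\,K' \csub K' \csub M'$ to obtain $\Psi \in \hsk qM$ with $\Psi_{\e,x}=\Phi_{\e,x}$ for $x \in K'$ and small $\e$. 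Since $\Phi_{\e,x}$ has support in $M'$, one has $R|_{M'}(\Phi_{\e,x},x)=R(\Phi_{\e,x},x)=R(\Psi_{\e,x},x)$ on $\mathrm{int}\,K'$; applying $\Lie^x_{X_1}\dotsm\Lie^x_{X_j}$ and evaluating on $K$, where $X_i=\tilde X_i$ and $\Phi=\Psi$, reduces the estimate to the one furnished by the hypothesis on $R$.

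For part (ii), fix $K \csub M$ and $j$, and $X_1,\dotsc,X_j \in \fX(M)$. Cover $K$ by finitely many $U_{\alpha_1},\dotsc,U_{\alpha_r}$ and choose compact $K_i \csub U_{\alpha_i}$ with $K=\bigcup_i K_i$. For each $i$, the hypothesis on $R|_{U_{\alpha_i}}$, tested on $K_i$ with $X_k|_{U_{\alpha_i}}$, gives $q_i$ (and $N_i$); set $q=\max_i q_i$, $N=\max_i N_i$, using that $\hsk qM \subseteq \hsk{q_i}M$ since $O(\e^{q+1})$ implies $O(\e^{q_i+1})$. Given $\Phi \in \hsk qM$ and $x \in K_i$, (SK1) yields $\supp\Phi_{\e,x}\subseteq B^h_{C\e}(x)\subseteq U_{\alpha_i}$ for small $\e$ (as $K_i$ has positive $h$-distance to $M\setminus U_{\alpha_i}$), whence $R(\Phi_{\e,x},x)=R|_{U_{\alpha_i}}(\Phi_{\e,x},x)$. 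Replacing $\Phi$ near $K_i$ by some $\Psi_i \in \hsk q{U_{\alpha_i}}$ via (SK4) (again on a compact neighborhood of $K_i$, to preserve $x$-derivatives) transfers the estimate for $R|_{U_{\alpha_i}}$ on $K_i$ to $R$, and since finitely many $K_i$ cover $K$ the uniform bound $O(\e^{-N})$ (resp.\ $O(\e^m)$) holds on all of $K$.

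The only genuine obstacle is the bookkeeping just described: arranging that the kernel-matching from (SK4) holds on an open neighborhood rather than merely on $K$, so that the iterated Lie derivatives in $x$ are preserved, and that supports lie in the correct open set so that $R$ and its restriction agree on the relevant test forms. Once these are in place, both statements follow directly from the defining estimates, exactly as in the local case.
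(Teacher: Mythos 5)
Your proof is correct and follows exactly the route the paper intends: the paper gives no written proof, declaring the statement an immediate consequence of (SK4) as in the local case, and your argument is precisely that reduction, with the additional (welcome) care of matching kernels and vector fields on a compact neighbourhood of $K$ so that the iterated $x$-Lie-derivatives are preserved and of invoking (SK1) to keep supports inside the relevant open sets.
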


%
By Proposition \ref{mf_localize} (i) restriction is well-defined also on the quotient space:

\begin{definition}
Let $\hat T \in \hG M$ and $M' \subseteq M$. Then the restriction $\hat T|_{M'} \in \hG {M'}$ of $\hat T$ to $M'$ is defined as $\hat T|_{M'} \coleq T|_{M'} + \hN{M'}$ where $T \in \hEm M$ is any representative of $\hat T$.
\end{definition}

\begin{proposition}$\hat\cG$ is a fine sheaf.
\end{proposition}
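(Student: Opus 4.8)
The plan is to reduce the manifold statement to the local sheaf result, Proposition~\ref{itsasheaf}, via the chart isomorphisms already established. The sheaf axioms here are exactly the locality and gluing properties for moderate and negligible functions, which Proposition~\ref{mf_localize} already provides in the manifold setting: part~(i) gives that restriction is well-defined, and part~(ii) gives uniqueness of any glued section. So the only substantive thing to prove is \emph{existence} of a global section $\hat T \in \hG M$ restricting correctly to each member of a cover, given local sections that agree on overlaps.

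First I would set up the data: let $\{U_\lambda\}_\lambda$ be an open cover of $M$ with sections $\hat T_\lambda \in \hG{U_\lambda}$, represented by $T_\lambda \in \hEm{U_\lambda}$, satisfying $(\hat T_\lambda - \hat T_\mu)|_{U_\lambda \cap U_\mu} = 0$. Exactly as in the local proof I would choose a locally finite partition of unity $\{\chi_j\}_j$ subordinate to the cover, with $\supp \chi_j \csub U_{\lambda(j)}$, together with relatively compact neighborhoods $W_j$ of $\supp \chi_j$ and cut-offs $\theta_j \in \ccD(U_{\lambda(j)})$ equal to $1$ on $\overline{W_j}$. The one genuinely manifold-specific adjustment is the construction of the operators $\pi_j$: whereas in the local case $\pi_j(\omega) = \theta_j \cdot \omega$ multiplies a test \emph{function}, here $\omega \in \ocM$ is a compactly supported $n$-form, so I set $\pi_j(\omega) \coleq \theta_j \cdot \omega \in \Omega^n_c(U_{\lambda(j)})$, which is again a linear bounded (hence smooth) map $\ocM \to \Omega^n_c(U_{\lambda(j)})$. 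Then I would define $T(\omega, x) \coleq \sum_j \chi_j(x) \cdot T_{\lambda(j)}(\pi_j(\omega), x)$, the sum being locally finite and thus smooth.

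The verification that $T \in \hEm M$ and that $T|_{U_\lambda} = \hat T_\lambda$ in $\hG{U_\lambda}$ then proceeds \emph{verbatim} as in Proposition~\ref{itsasheaf}, with $\pd_x^\alpha$ replaced by Lie derivatives $\Lie^x_{X_1} \dotsm \Lie^x_{X_j}$ as in Definition~\ref{mf_modneg}, and with (LSK1),(LSK4) replaced by their manifold counterparts (SK1),(SK4) from Proposition~\ref{mf_additional}. The crucial mechanism is unchanged: for a fixed $K \csub M$ only finitely many $\chi_j$ are relevant; for each such $j$ one uses (SK4) to replace the given $\Phi \in \hsk q M$ by a kernel $\Psi \in \hsk q {U_{\lambda(j)}}$ agreeing with it on a neighborhood of the relevant compact set, and then (SK1) guarantees that for small $\e$ the support of $\Phi_{\e,x}$ lies inside $W_j$, so that $T_{\lambda(j)}(\pi_j(\Phi_{\e,x}), x) = T_{\lambda(j)}(\Psi_{\e,x}, x)$ and the known moderateness (resp.\ negligibility on overlaps) of $T_{\lambda(j)}$ transfers to $T$. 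The agreement $T|_{U_\lambda} = \hat T_\lambda$ reduces, via Proposition~\ref{mf_localize}~(ii) applied to the cover $\{W_k\}_k$, to negligibility of $(T - T_{\lambda(k)})|_{U_\lambda \cap W_k}$, which is again a finite sum whose summands are handled by the overlap hypothesis and (SK4),(SK1) as above.

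The step I expect to demand the most care is confirming that the manifold cut-off construction really does reproduce the local argument cleanly --- specifically, checking that $\pi_j$ defined on $n$-forms is compatible with the chart isomorphism of basic spaces so that the local estimates of Proposition~\ref{itsasheaf} apply after passing through a chart $(U_{\lambda(j)}, \varphi)$ via Corollary~\ref{modnegloc} and Proposition~\ref{skloc}. In practice this is routine, since multiplication by $\theta_j$ corresponds under the isomorphism to the local multiplication by $(\theta_j)_U$, but it is the only place where the global and local pictures must be explicitly matched. Everything else is a transcription of the already-proven local sheaf property. Finally, that $\hat\cG$ is a \emph{fine} sheaf follows, exactly as before, from its being a sheaf of modules over the soft sheaf $\Cinf$ on $M$, by \cite[Theorem~9.16]{Bredon}.
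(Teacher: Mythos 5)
Your proposal is correct and follows essentially the same route as the paper, which simply states that the proof of Proposition~\ref{itsasheaf} applies with the obvious modifications; you have merely made those modifications explicit (forms in place of test functions, Lie derivatives in place of $\pd_x^\alpha$, (SK1)/(SK4) in place of (LSK1)/(LSK4)). The only cosmetic difference is that the paper justifies fineness by noting the sheaf is locally fine, citing \cite{Dowker}, whereas you reuse the module-over-$\Cinf$ argument from the local case; both are standard and acceptable.
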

\begin{proof}
The proof of Proposition \ref{itsasheaf} applies with the obvious modifications; additionally, $\hat\cG$ is fine because it is locally fine (\cite{Dowker}).
\end{proof}

\begin{theorem}
(i) $\iota(\ccD'(M)) \subseteq \hEm M$,
(ii) $\sigma(\Cinf(M)) \subseteq \hEm M$,
(iii) $(\iota - \sigma)(\Cinf(M)) \subseteq \hN M$,
(iv) $\iota(\ccD'(M)) \cap \hN M = \{ 0 \}$.
\end{theorem}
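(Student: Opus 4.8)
The plan is to reduce parts (i)--(iii) to the corresponding local Theorem via the chart correspondence of Proposition \ref{skloc} and Corollary \ref{modnegloc}, and to prove (iv) directly from (SK5) in exactly the way (LSK5) was used in the local case.

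For (i)--(iii) I would first invoke that moderateness and negligibility are local properties (Proposition \ref{mf_localize}): it suffices to test $R|_U$ on every chart domain $(U,\varphi)$ of an atlas of $M$. By Corollary \ref{modnegloc}, $R|_U$ is moderate, respectively negligible, if and only if its chart representation $(R|_U)_U \in \hE{\varphi(U)}$ is so in the local sense. The one substantial verification is then the compatibility of the embeddings with the localization maps $R \mapsto R_U$, $u \mapsto u_U$, $f \mapsto f_U$: a short computation from the definitions of the chart isomorphisms yields $(\iota u)_U = \iota(u_U)$ and $(\sigma f)_U = \sigma(f_U)$, where on the right $\iota,\sigma$ denote the local embeddings of Definition \ref{basedef}. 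For (iii) one additionally checks that the localization of $f \in \Cinf(M)$ regarded as a distribution agrees with the localization of $f$ regarded as a smooth function, so that $(\iota f - \sigma f)_U = (\iota - \sigma)(f_U)$. With these identities, (i), (ii) and (iii) follow chart by chart from the local Theorem, and Proposition \ref{mf_localize} (ii) glues the local verifications back to $M$.

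For (iv) I would mimic the local argument verbatim, replacing (LSK5) by (SK5). Let $u \in \ccD'(M)$ with $\iota u \in \hN M$ and fix $\omega \in \ocM$ with support $K$. Choosing $q$ from the negligibility of $\iota u$ for the compact set $K$, derivative order $j=0$ and, say, $m=1$, and taking any $\Phi \in \hsk q M$, we get that $(\iota u)(\Phi_{\e,x}, x) = \langle u, \Phi_{\e,x}\rangle$ converges to $0$ uniformly for $x \in K = \supp \omega$, so that $\int_M \langle u, \Phi_{\e,x}\rangle\, \omega(x) \to 0$. On the other hand, since $\hsk q M \subseteq \hsk 0 M$, the kernel $\Phi$ is in particular of order $0$, and (SK5) with $k=0$ gives $\langle u, \Phi_{\e,x}\rangle \to u$ in $\ccD'(M)$, i.e.\ the same pairing converges to $\langle u, \omega\rangle$. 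Hence $\langle u, \omega\rangle = 0$ for every $\omega \in \ocM$, which forces $u = 0$.

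The localization and the compatibility identities for (i)--(iii) are routine, relying only on the definitions of the four chart correspondences. The main obstacle, as in the local case, is (iv): one must ensure that $x \mapsto \langle u, \Phi_{\e,x}\rangle$ is genuinely a smooth (hence distributionally pairable) function of $x$ and that the negligibility estimate is uniform on the \emph{compact} set $\supp \omega$, so that the two limits above may be legitimately compared. Both are guaranteed by smoothness of $\Phi$ together with (SK1) and (SK5).
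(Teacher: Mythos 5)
Your treatment of (i)--(iii) coincides with the paper's: localize via Proposition \ref{mf_localize}, use Corollary \ref{modnegloc}, and observe that the embeddings commute with the chart correspondences ($(\iota u)_U = \iota(u_U)$, $(\sigma f)_U = \sigma(f_U)$), so that the local theorem applies chart by chart. For (iv), however, you take a genuinely different route. The paper stays within its reduction scheme: negligibility of $\iota u$ gives negligibility of $\iota u|_U$ and hence of $\iota(u_U)$ on each chart, so the \emph{local} part (iv) yields $u_U = 0$ for every chart domain and thus $u=0$. You instead rerun the local argument globally, pairing $\langle u, \Phi_{\e,x}\rangle$ against $\omega$ and comparing the negligibility limit with the limit from (SK5). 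Your argument is sound, but it silently requires that $\hsk q M$ be nonempty for the relevant $q$ --- otherwise the negligibility condition is vacuous and nothing follows. The paper establishes nonemptiness only locally (Proposition \ref{notempty} for $\hlsk q \Omega$, transported to chart domains by Proposition \ref{skloc}); it never constructs a global smoothing kernel on $M$, and its chart-by-chart proof of (iv) is designed precisely so that only the local nonemptiness is needed. If you want to keep your direct argument you should either add the (routine but not entirely trivial) gluing construction of an element of $\hsk q M$ from local prototypes via a partition of unity, or else localize first as the paper does. The remaining points you flag --- smoothness of $x \mapsto \langle u, \Phi_{\e,x}\rangle$ and uniformity on $\supp\omega$ --- are indeed covered by smoothness of $\Phi$ and the definitions, as you say.
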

\begin{proof}Insteaf of proving this directly we use the local results: for (i), $\iota u$ is moderate if $\iota u|_U = \iota(u|_U) = (\iota(u_U))^U$ is so on each chart domain $U$, which by Corollary \ref{modnegloc} is the case because $\iota(u_U)$ is moderate; similarily for (ii) and (iii). For (iv), $\iota u|_U$ and thus $\iota (u_U)$ are negligible, which implies $u_U=0$ for all chart domains $U$ and thus $u=0$.
\end{proof}

$\hEm M$ is a subalgebra of $\hE M$ and $\hN M$ is an ideal in $\hEm M$, so we can define the algebra of generalized functions on $M$ as the quotient of moderate modulo negligible functions.
\begin{definition}$\hG M \coleq \hEm M / \hN M$.
\end{definition}

\begin{theorem}
 $\hat\Lie_X$ preserves moderateness and negligibility.
\end{theorem}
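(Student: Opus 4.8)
The plan is to reduce the claim to its already-established local counterpart, Theorem~\ref{diffeoinv}, by testing moderateness and negligibility chart by chart. The two ingredients that make this work are that moderateness and negligibility are local properties (Proposition~\ref{mf_localize}) and that they are detected in charts (Corollary~\ref{modnegloc}); together with the chart transformation of $\hat\Lie_X$ noted before Definition~\ref{mf_skdef}, this lets all the analytic content rest on the local theorem.

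First I would record that $\hat\Lie_X$ commutes with restriction to open subsets: for $R \in \hE M$, $X \in \fX(M)$ and $U \subseteq M$ open, the defining formula for $\hat\Lie_X$ is tested only against $\omega \in \Omega^n_c(U)$ and at points $x \in U$, and since $\Lie_X\omega$ depends only on $X|_U$ on $\supp\omega$ while the derivative in $x$ is local, one reads off $(\hat\Lie_X R)|_U = \hat\Lie_{X|_U}(R|_U)$ directly from the definition.

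Next, fix an atlas $\{(U_\alpha,\varphi_\alpha)\}_\alpha$ of $M$. By Proposition~\ref{mf_localize}(ii) it is enough to prove that $(\hat\Lie_X R)|_{U_\alpha}$ is moderate (resp.\ negligible) for each $\alpha$. Combining the commutation above with the identity $(\hat\Lie_X R)(\omega, x) = (\hat\Lie_{X_U} R_U)(\omega_U, \varphi x)$, read as an equality of local basic functions, gives $((\hat\Lie_X R)|_{U_\alpha})_{U_\alpha} = \hat\Lie_{(X|_{U_\alpha})_{U_\alpha}}\bigl((R|_{U_\alpha})_{U_\alpha}\bigr)$. Now if $R$ is moderate (resp.\ negligible), then so is $R|_{U_\alpha}$ by Proposition~\ref{mf_localize}(i), hence $(R|_{U_\alpha})_{U_\alpha}$ is moderate (resp.\ negligible) on $\varphi_\alpha(U_\alpha)$ by Corollary~\ref{modnegloc}. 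Applying Theorem~\ref{diffeoinv} to the local vector field $(X|_{U_\alpha})_{U_\alpha} \in \Cinf(\varphi_\alpha(U_\alpha),\bR^n)$ shows that $\hat\Lie_{(X|_{U_\alpha})_{U_\alpha}}\bigl((R|_{U_\alpha})_{U_\alpha}\bigr)$ is again moderate (resp.\ negligible); transporting back through Corollary~\ref{modnegloc} makes $(\hat\Lie_X R)|_{U_\alpha}$ moderate (resp.\ negligible), and Proposition~\ref{mf_localize}(ii) finishes the proof.

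I expect no genuine obstacle here: the only things to verify are the two bookkeeping identities relating global and local $\hat\Lie_X$ --- its commutation with restriction and its behaviour under the chart isomorphism --- and both follow at once from the definitions and the locality of the Lie derivative. The substance of the statement has already been discharged in Theorem~\ref{diffeoinv}, so on manifolds stability under $\hat\Lie_X$ is merely a formal consequence of the local case together with the sheaf-theoretic machinery now in place.
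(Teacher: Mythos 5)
Your proposal is correct and follows essentially the same route as the paper: localize to chart domains via Proposition \ref{mf_localize}, transfer through the chart isomorphism using the identity $(\hat \Lie_X R)(\omega, x) = (\hat \Lie_{X_U} R_U)(\omega_U, \varphi x)$ and Corollary \ref{modnegloc}, and invoke the local Theorem \ref{diffeoinv}. The only difference is presentational: you spell out the commutation of $\hat\Lie_X$ with restriction as a separate step, which the paper compresses into a single sentence.
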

\begin{proof}Once again using (LSK4) one sees that $(\hat \Lie_X R)|_U$ is moderate or negligible, respectively, if and only if  $\hat \Lie_{X|_U} R|_U$ is so for all chart domains $U$, which by Corollary \ref{modnegloc} is the case if and only if $(\hat \Lie_{X|_U} R|_U)_U = \hat \Lie_{X_U}R_U$ is moderate or negligible, respectively, which holds by Theorem \ref{diffeoinv}.
\end{proof}

\begin{definition}$R, S \in \hEm M$ are called associated with each other, written $R \approx S$, if $\forall \omega \in \Omega^n_c(M)$ $\exists q \in \bN$ $\forall \Phi \in \hsk qM$: $\lim_{\e \to 0}\int (R-S)(\Phi_{\e,x}, x) \omega(x) = 0$.
\end{definition}
This definition is independent of the representatives and extends to $\hG M$ because elements of $\hN M$ are associated with $0$. The notion of association localizes as well:
\begin{lemma}
\begin{enumerate}[label=(\roman*)]
\item Given $R,S \in \hEm M$ and an open cover $M$, $R \approx S$ if and only if $R|_U \approx S|_U$ for all sets $U$ of the cover. In particular, $R \approx S$ implies $R|_U \approx S|_U$ for any open subset $U$ of $M$.
\item Given $R,S \in \hEm U$ for a chart domain $U$, $R \approx S$ if and only if $R_U \approx S_U$.
\end{enumerate}
\end{lemma}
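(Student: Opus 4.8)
The plan is to prove part (i) by transcribing the proof of Lemma \ref{assoclocal} into the manifold setting, with (SK1) and (SK4) playing the roles of (LSK1) and (LSK4), and to prove part (ii) as a direct translation through the chart isomorphism of Proposition \ref{skloc}. In both parts no new estimates are needed; everything reduces to the localization machinery and the chart dictionary already in place.

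For the restriction assertion in (i), let $U \subseteq M$ be open and $\omega \in \Omega^n_c(U)$. From $R \approx S$ the definition of association on $M$ gives some $q$ with $\int (R-S)(\Phi_{\e,x}, x)\,\omega(x) \to 0$ for every $\Phi \in \hsk q M$. Given $\Psi \in \hsk q U$, I would invoke (SK4) (with $V = M$ and $K = \supp\omega \csub U$) to produce $\Phi \in \hsk q M$ with $\Phi_{\e,x} = \Psi_{\e,x}$ for $x \in \supp\omega$ and small $\e$; since $\omega$ vanishes off its support and $R|_U$ agrees with $R$ on forms supported in $U$, the two integrals coincide and $R|_U \approx S|_U$ follows. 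For the gluing direction, given $\omega \in \Omega^n_c(M)$ I would choose a partition of unity $(\chi_j)_j$ subordinate to the cover, write $\omega = \sum_j \omega_j$ with $\omega_j = \chi_j\omega$ (a finite sum, each $\supp\omega_j \csub U_{\alpha(j)}$), and set $q = \max_j q_j$ where $q_j$ comes from $R|_{U_{\alpha(j)}} \approx S|_{U_{\alpha(j)}}$. For $\Phi \in \hsk q M$ the integral splits as $\sum_j \int (R-S)(\Phi_{\e,x}, x)\,\omega_j(x)$; by (SK1) the support of $\Phi_{\e,x}$ lies in $U_{\alpha(j)}$ for $x \in \supp\omega_j$ and small $\e$, so each summand may be read as a restricted pairing, and (SK4) lets me replace $\Phi$ by some $\Phi_j \in \hsk q{U_{\alpha(j)}} \subseteq \hsk{q_j}{U_{\alpha(j)}}$ agreeing with it on $\supp\omega_j$. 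Each summand then tends to $0$ and the finite sum does as well.

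For part (ii), the key is the identity $R(\Phi_{\e,x}, x) = R_U((\Phi_U)_{\e, \varphi x}, \varphi x)$ established in the proof of Corollary \ref{modnegloc}, together with the change of variables $\int_U F \cdot \omega = \int_{\varphi(U)} (F \circ \varphi^{-1}) \cdot \omega_U \,\ud y$ relating integration of a function against an $n$-form to the Euclidean pairing against its chart density $\omega_U$. Combining these, $\int_U (R-S)(\Phi_{\e,x}, x)\,\omega(x)$ becomes $\langle (R_U - S_U)((\Phi_U)_{\e,\cdot}, \cdot), \omega_U \rangle$. Since $\omega \mapsto \omega_U$ is a bijection $\Omega^n_c(U) \to \ccD(\varphi(U))$ and $\Phi \mapsto \Phi_U$ is the bijection $\hsk q U \to \hlsk q{\varphi(U)}$ of Proposition \ref{skloc}, the quantifier pattern ``$\exists q\ \forall \Phi$'' matches on both sides, and the equivalence $R \approx S \Leftrightarrow R_U \approx S_U$ is immediate from Definition \ref{assocdef}.

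I expect the genuine content to be light: both parts reduce to already-established structural facts. The one point requiring care is the bookkeeping in the gluing step of (i)---ensuring that (SK1) localizes each summand to the correct cover element before (SK4) is applied, and that the order inclusion $\hsk q{U_{\alpha(j)}} \subseteq \hsk{q_j}{U_{\alpha(j)}}$ for $q \ge q_j$ is invoked so that the single global order $q$ suffices for all finitely many summands simultaneously.
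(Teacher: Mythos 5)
Your proposal is correct and follows exactly the route the paper takes: part (i) is the manifold transcription of Lemma \ref{assoclocal} with (SK1) and (SK4) replacing (LSK1) and (LSK4), and part (ii) is the direct chart-dictionary translation the paper dismisses as immediate from the definitions. The details you supply (localizing each summand via (SK1) before applying (SK4), and matching the quantifier pattern through the bijections $\omega \mapsto \omega_U$ and $\Phi \mapsto \Phi_U$) are precisely what the paper leaves implicit.
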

\begin{proof}
(i) is proven exactly as Lemma \ref{assoclocal} while (ii) follows immediately from the definitions.
\end{proof}

As before, we have:
\begin{proposition}
 \begin{enumerate}[label=(\roman*)]
  \item For $f \in C^\infty(\Omega)$ and $u \in \ccD'(\Omega)$, $\iota(f)\iota(u) \approx \iota(f u)$.
  \item For $f,g \in C(\Omega)$, $\iota(f) \iota(g) \approx \iota(fg)$.
  \end{enumerate}
\end{proposition}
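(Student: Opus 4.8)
The plan is to reduce both statements to their local counterparts, already established in the Association section, by invoking the localization lemma for association on manifolds together with the compatibility of the embeddings and of the algebra multiplication with the chart isomorphisms. Recall that a continuous or smooth function $f$ embeds into $\ccD'(M)$ via $\langle f, \omega \rangle = \int_M f \omega$, so that $\iota(f)$, $\iota(fu)$ and $\iota(fg)$ are all meaningful, and that $fu$ (smooth times distribution) and $fg$ (pointwise product of continuous functions) are well-defined.

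First I would cover $M$ by chart domains $(U,\varphi)$ and apply part (i) of the localization lemma: to prove $\iota(f)\iota(u) \approx \iota(fu)$ on $M$ it suffices to prove the associations after restriction to each $U$, and likewise for (ii). Part (ii) of the same lemma then transports the question to $\varphi(U)$, since on a chart domain $R \approx S$ holds if and only if $R_U \approx S_U$. It then remains to check that all the operations in sight commute with the passage $R \mapsto R_U$. As restriction to an open set and the chart isomorphism $R \mapsto R_U$ are algebra homomorphisms, products are preserved, so $(\iota(f)\iota(u))_U = (\iota f)_U\,(\iota u)_U$. Because the embeddings commute with restriction and with diffeomorphisms, $(\iota u)_U = \iota(u_U)$ and $(\iota f)_U = \iota(f_U)$ with $f_U = f \circ \varphi^{-1}$; moreover a short computation using $\langle fu, \omega\rangle = \langle u, f\omega\rangle$ and the definitions of $u \mapsto u_U$ and $\omega \mapsto \omega_U$ gives $(fu)_U = f_U\, u_U$, and similarly $(fg)_U = f_U g_U$. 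Consequently statement (i) localizes to $\iota(f_U)\iota(u_U) \approx \iota(f_U u_U)$ on $\varphi(U)$ and statement (ii) to $\iota(f_U)\iota(g_U) \approx \iota(f_U g_U)$, which are precisely the assertions of the local Proposition.

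The argument involves no genuine difficulty; the content is entirely in the local case, and the global step is bookkeeping. The only point requiring care is verifying that $fu$ and $fg$, as well as the embeddings of functions and distributions, transport to their local counterparts under the chart isomorphism. I expect the identity $(fu)_U = f_U\, u_U$ — obtained by tracking the action on $n$-forms through the definitions of $u \mapsto u_U$ and $\omega \mapsto \omega_U$, using that multiplication by $f$ acts pointwise on forms so that $(f\,\psi^U)_U = f_U\,\psi$ — to be the most delicate, though still routine, step.
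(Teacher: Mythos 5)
Your proposal is correct and follows essentially the same route as the paper: the paper's proof of the manifold version is precisely the reduction via the localization lemma for association (restrict to chart domains, then transport through $R \mapsto R_U$) to the already-proven local proposition, i.e.\ $\iota(f)\iota(u) \approx \iota(fu)$ holds iff $\iota(f_U)\iota(u_U) \approx \iota((fu)_U)$ on each chart. Your extra care in checking $(fu)_U = f_U u_U$ and the compatibility of the embeddings with the chart isomorphisms is exactly the bookkeeping the paper leaves implicit.
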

\begin{proof}
 (i) $\iota(f)\iota(u) \approx \iota(fu)$ if and only if $\iota(f)|_U\iota(u)|_U \approx \iota(fu)|_U$ for all $U$ of an atlas, which is the case if and only if $\iota(f_U)\iota(u_U) \approx \iota((fu)_U)$; and similarly for (ii).
\end{proof}

This work was supported by projects P20525 and P23714 of the Austrian Science Fund (FWF).

\end{document}